	\numberwithin{equation}{section}
	\theoremstyle{plain}
		\newtheorem{thm}{Theorem}[section]
		\newtheorem{coro}[thm]{Corollary}
		\newtheorem{lem}[thm]{Lemma}
		\newtheorem{prop}[thm]{Proposition}
	\theoremstyle{definition}
		\newtheorem{defn}[thm]{Definition}
		\newtheorem{ex}[thm]{Example}
		\newtheorem{notation}[thm]{Notation}
	\theoremstyle{remark}
		\newtheorem{rmk}[thm]{Remark}
		\newtheorem{remark}[thm]{Remark}
\newcommand{\N}{\mathbb{N}}
\newcommand{\Z}{\mathbb{Z}}
\newcommand{\F}{\mathbb{F}}
\newcommand{\K}{\mathbb{K}}
\newcommand{\C}{\mathcal{C}}
\newcommand{\G}{\mathcal{G}}
\newcommand{\V}{\mathcal{V}}
\renewcommand{\=}{\coloneqq}
\DeclareMathOperator{\im}{im}	
\DeclareMathOperator{\id}{id}
\DeclareMathOperator{\coker}{coker}
\DeclareMathOperator{\supp}{supp}	
\DeclareMathOperator{\cone}{Cone}
\DeclareMathOperator{\Tor}{Tor}
\title{On the support of Betti tables of multiparameter persistent homology modules}
\author{Andrea Guidolin\footnote{University of Southampton, United Kingdom}\\ \texttt{a.guidolin@soton.ac.uk}
\and Claudia Landi\footnote{University of Modena and Reggio Emilia, Italy}\\
\texttt{claudia.landi@unimore.it}}
\date{}
\begin{document}
\maketitle

\begin{abstract}
Persistent homology encodes the evolution of homological features of a multifiltered cell complex in the form of a multigraded module over a polynomial ring, called a multiparameter persistence module, and quantifies it through invariants suitable for topological data analysis. 
  
In this paper, we establish relations between the Betti tables, a standard invariant for multigraded modules commonly used in multiparameter persistence, and the multifiltered cell complex. In particular, we show that the grades at which cells of specific dimensions first appear in the filtration reveal all positions in which the Betti tables are possibly nonzero. This result can be used in combination with discrete Morse theory on the multifiltered cell complex originating the module to obtain a better approximation of the support of the Betti tables. In the case of bifiltrations, we refine our results by considering homological critical grades of a filtered chain complex instead of entrance grades of cells.\\
  
\noindent
{\em MSC:} 55N31, 57Q70 (primary); 55U15, 13D02 (secondary)
 
\noindent
{\em Keywords:}  Discrete Morse theory, Multigraded Betti numbers, Koszul complex
\end{abstract}

\section{Introduction}
\label{sec:intro}

One of the main concepts in Topological Data Analysis is {\em persistent homology}, a tool to capture topological information at multiple scales and provide meaningful topological summaries of the data, as surveyed, for example, in \cite{Ghrist2008, Carlsson2009TD, edelsbrunner2012persistent}. 
In practice, assuming that a data set comes equipped with measurements like functions or metrics to filter it, persistent homology transforms the filtered data into a nested family of chain complexes that depend on as many parameters as the number of different measurements used. 
Applying homology with coefficients in a field $\F$ to such a filtered chain complex produces a parametrized family of vector spaces, connected by linear transition maps, called a {\em persistent homology module}. 
Algebraic invariants of persistent homology modules provide the required summaries of the data topology. 

Classically, the development of the theory of persistent homology originated from two separate roots: Morse theory (as in, e.g., \cite{Barannikov94,frosini1996connections,Robins2000, Edelsbrunner2002}), and commutative algebra (as in, e.g., \cite{zomorodian2005computing,carlsson2009theory,oudot2015persistence}). These two perspectives reconcile very elegantly in the case of 1-parameter persistence, i.e.\ when the filtration depends on only one parameter. In this case,  persistent homology modules admit a complete invariant,  the  so-called {\em barcode},  encoding the lifespan of homology classes through the considered filtration.
From the standpoint of Morse theory, the endpoints of bars in a persistence barcode correspond to the cancellation of pairs of critical points of the filtering (Morse) function \cite{Edelsbrunner2002}.  From the algebraic  perspective,  a  persistent homology module is a representation of a finite linear quiver in the category of vector spaces. Thus, a 1-parameter persistence module  admits a unique decomposition  into  {\em interval modules}, i.e.\ indecomposable modules, each  supported on an interval. These intervals are exactly the bars of the persistence barcode \cite{zomorodian2005computing}. 

It is of both theoretical and practical interest to understand persistent homology in the case of multiple parameters, yielding to the so-called {\em multiparameter persistence}. Indeed,  in applications, one often needs to filter the data using more than only one measurement, obtaining a multiparameter persistence module. This is the case, for example, when there are different drivers for a phenomenon \cite{Cerri2013PHOG}, or when one needs to downsize the role of outliers by adding a co-density measurement to the principal, explanatory, measurement as in \cite{Frosini2013,Blumberg2022}. 

Unfortunately, the theory of multiparameter persistence modules proves to be much more elusive than the single-parameter one: in particular, since multigraded modules are of wild representation type \cite{gabriel}, more complicated indecomposables than just intervals can generally occur, and it is impossible to list them all or characterize them via discrete invariants.
Despite this difficulty, all the relevant  
homological events in a multiparameter filtration are conveniently captured by the {\em Betti tables} of the multiparameter persistent homology module  \cite{carlsson2009theory}. However, these events cannot be paired to obtain summaries similar to barcodes, and their mutual dependencies cannot be easily unveiled.

One of the motivations of this paper is to relate the events captured by the Betti tables of a multiparameter persistent homology module to the events captured by Morse theory, considered in its combinatorial formulation \cite{Forman1998, kozlov2005discrete}. This attempt to reconnect the algebraic perspective to Morse theory in the multiparameter situation is both of theoretical interest in commutative algebra and of practical advantage, as it provides a unified perspective to study persistent homology modules together with the underlying filtered complexes.

In this perspective, starting from the observation that for a 1-parameter persistent homology module the support of the Betti tables  coincides with the set of
entrance grades of critical cells in the filtration under consideration,  our goal is to understand whether and to what extent this fact can be generalized to multiparameter persistence. 
An indication that this may be the case comes from the  results of \cite{guidolin2023morse}, which establish Morse inequalities involving, on the one hand,  the values of the Betti tables of a multiparameter persistent homology module, and, on the other hand,  the so-called  \emph{homological critical numbers} of the same filtration. The latter numbers can be viewed as theoretical lower bounds of the numbers of critical cells entering the filtration at each filtration grade for any choice of a discrete gradient vector field consistent with the filtration.

The results of this paper delimit, in the space of parameters, the support of the Betti tables of a persistent homology module in terms of the entrance grades of cells in the multiparameter filtration. Moreover, we study the relation between the dimension of the entered cells and the degree of the persistent homology module on which they impact. In our setting, the multiparameter filtration is defined on an abstract cell complex, an object representing in a combinatorial way a chain complex of vector spaces with distinguished bases (Section \ref{subsec:chaincm}). To obtain our main results of Sections \ref{sec:free}, \ref{sec:support_xi} and \ref{sec:2param}, the filtration is assumed to be defined via the sublevel sets of measurement functions.
In such filtrations, also called one-critical in topological data analysis \cite{carlsson2009computing}, every cell has a unique entrance grade (Section \ref{subsec:multipers}). 

From a different perspective, we aim to highlight how prior known results about multigraded resolutions are relevant to the study of multiparamenter persistence, and what can be gained in the context of persistence by integrating them with Morse theory. Indeed, the main goal of this paper can be stated also in the language of multigraded commutative algebra, considering $n$-graded modules over the polynomial ring $S\coloneqq \F[x_1,\ldots ,x_n]$. An $n$-parameter persistent homology module can be viewed as an $n$-graded $S$-module $V$ which is presented as the homology at the middle term of a sequence $A\xrightarrow{f}B\xrightarrow{g}C$ of $n$-graded $S$-modules with $gf=0$. If the $n$-parameter filtration is one-critical, the modules $A$, $B$, and $C$ are free. Our goal is to study the Betti tables of $V$ and relate their support with the grades of the generators of the modules $A$, $B$, and $C$.

In Section \ref{sec:free}, we highlight how multigraded free presentations and resolutions, well-studied in multigraded commutative algebra \cite{miller2005combinatorial,peeva2010graded}, can be applied in the context of multiparameter persistence. Via this approach, we obtain some initial bounds on the support of the Betti tables of a persistent homology module in terms of entrance grades of cells in the multiparameter filtration (Proposition \ref{prop:suppfreepresVq} and Remark \ref{rmk:free-bounds}).

Nevertheless, we can say more about the support of Betti tables of persistence if, instead of approaching the problem directly using a free resolution of the multiparameter persistence module, we use the \emph{Koszul complex} associated with the persistence module, a strategy already used in \cite{knudson2008refinement,lesnick2019computing,guidolin2023morse}.  More specifically, our technique is based on the construction of the Koszul complex via mapping cones (Section \ref{sec:Koszul}). Using this inductive construction, we can compute Betti tables by looking at the space of parameters only locally and, more importantly, we can disentangle the different parameters of the multiparameter filtration: the Koszul complex at a fixed grade in an $n$-parameter space is determined by the Koszul complexes at nearby grades in an $(n-1)$-parameter space. This allows for explicit and direct proofs. As an advantage, we can identify obstructions to the vanishing of Betti tables of a persistence module, which may not be as clear using the more abstract approach via free resolutions, and get tighter bounds than directly using resolutions. 

In detail, given an $n$-parameter filtration $\{ X^u\}_{u\in \N^n}$ of a finite cell complex $X$, we consider, for any $q\in \N$, the set $\G(X_q)$ of entrance grades of $q$-cells in the filtration, as well as its closure $\overline{\G (X_q)}$ with respect to least upper bounds, i.e.\ the smallest set containing $\G(X_q)$ and the least upper bounds in $\N^n$ of its nonempty subsets. We denote by $\xi_i^q\colon\N^n\to \N$ the $i$th Betti table of the persistent homology module obtained as the $q$th homology of the filtration.
In the case when the filtration is one-critical, Theorem \ref{thm:supp_xi_m} of Section \ref{sec:support_xi} states a relation between the support $\supp \xi_i^q \coloneqq \{u\in \N^n \mid \xi_i^q(u)\ne 0 \}$ of the Betti tables and the sets of entrance grades of cells: for all $q\in \N$,
\[ \bigcup_{i=0}^n \supp \xi^q_i   \subseteq \overline{\G (X_{q+1})} \cup \overline{\G (X_{q})}.
\]
This delimitation of the support of the Betti tables using the entrance grades of cells cannot be tightened (see Example \ref{ex:tight}).  

We next focus on particular Betti tables for which the containment above can be improved. 
Still in Theorem \ref{thm:supp_xi_m}, we prove that $\supp \xi^q_0   \subseteq \overline{\G (X_q)}$ and $\supp \xi^q_n   \subseteq \overline{\G (X_{q+1})}$, for all $q\in \N$. More interestingly, in Theorem \ref{thm:H1K} we identify a sufficient condition on submodules of boundaries and cycles for the vanishing of $\xi_1^q$ at a grade $u\in \N^n$. The condition for boundaries is the identity $B_q(X^u) = \sum_{j=1}^n B_q(X^{u-e_j})$ of submodules of $C_q(X^u)$, while the condition for  cycles consists, up to a permutation on the set $\{1,\ldots ,n\}$ enumerating the parameters, of the identities
\[
Z_q(X^{u-e_{\ell}})\cap \left( \sum_{j< \ell}Z_q(X^{u-e_{j}})\right) = \sum_{j< \ell}Z_q(X^{u-e_{j}-e_{\ell}}) ,
\]
for every $\ell \le n$. Our result implies the bound $\supp \xi_1^q \subseteq \G(X_{q+1})\cup \overline{\G(X_q)}$ for the support of the $1$st Betti table (Corollary \ref{coro:H1K}).
In particular, in comparison to what can be obtained using multigraded free resolutions of the persistent homology module as in Section \ref{sec:free}, we see that using the cone construction of the Koszul complex we get somehow stronger results. 

To reconnect our results with Morse theory, in Section \ref{subsec:Morse-supp} we observe that all our bounds for the support of Betti tables can be applied to the Morse complex associated with any discrete gradient vector field consistent with the filtration. The persistent homology module of the Morse complex has the same Betti tables as that of the original filtration, but the set of entrance grades of cells is typically much smaller. Therefore, using Morse complexes, one can often obtain better approximations of the support of the Betti tables.

In the endeavor to improve the bounds for the support of Betti tables, rather than considering entrance grades of cells (either of the original complex or of an associated Morse complex), as a further contribution of this paper we show that, in the case of 2-parameter filtrations that are one-critical, the support of the Betti tables of a persistent homology module is contained in the closure of the set of \emph{homological critical grades} (Section~\ref{sec:2param}). Although limited to the case of two parameters, this result improves our results from Section \ref{sec:support_xi} in two ways: it does not depend on the choice of a specific discrete gradient vector field and  establishes that all events witnessed by the Betti tables are determined by homological criticality (Corollary \ref{coro:supp_2par}). 

Our results of Section \ref{sec:support_xi} and Section \ref{sec:2param} hold for one-critical filtrations of cell complexes. Although they cannot be applied directly to filtrations that are not one-critical, a generalization in this direction can be obtained using results from \cite{Chacholski2017}, as we explain in Section \ref{sect:not-one-crit}.

\section{Preliminaries}
\label{sec:prelim}

Before presenting relevant background material for this article, let us establish some general notations: $\N$ denotes the set $\{0,1,\ldots\}$ of natural numbers;
$[n]$ denotes the set $\{1, 2, \ldots , n\}$; $\{e_i\}_{i=1,\ldots ,n}$ is the standard basis of $\N^n$; for any subset $\alpha \subseteq [n]$, we denote $e_{\alpha}\= \sum_{j\in \alpha} e_j$; $|J|$ denotes the cardinality of a set $J$; the symbols $\wedge$ and $\vee$  denote the greatest lower bound and least upper bound, respectively.

\subsection{Based chain complexes, cell complexes, and homology}
\label{subsec:chaincm}

Let $\F$ denote a field, arbitrary but fixed. 
A \emph{based chain complex} is a chain complex $C_* = (C_q,\partial_q)_{q\in \Z}$ of vector spaces over $\F$, which we assume to be of finite dimension, such that each $C_q$ is endowed with a distinguished basis $X_q$. Throughout this article, we assume all chain complexes to be bounded, meaning that $C_q =0$ whenever $q<0$ or $q\ge m$ for some integer $m$. 
Based chain complexes can be viewed from a combinatorial perspective, as their distinguished bases inherit the structure of an (abstract) \emph{cell complex}, in the sense of Lefschetz \cite{lefschetz1942algebraic}.
In this work, we call cell complex a finite graded set $X=\bigsqcup_{q\in \N} X_q$, whose elements are called \emph{cells}, endowed with an \emph{incidence function} $\kappa : X\times X \to \F$. A cell $\sigma \in X_q$ is said to have  \emph{dimension} $q$, denoted $\dim \sigma = q$, or to be a $q$-\emph{cell}. The incidence function must satisfy two axioms: 
(\textit{i}) $\kappa (\tau , \sigma) \ne 0$ implies $\dim \tau = \dim \sigma + 1$, and 
(\textit{ii}) $\sum_{\rho\in X}\kappa (\tau , \rho)\cdot \kappa (\rho ,\sigma)=0$, for any pair of cells $\tau$ and $\sigma$ in $X$. 
We endow $X$ with the order relation $\le$, called the \emph{face partial order}, generated by the \emph{covering face relation}: $\sigma < \tau$ if and only if $\kappa (\tau , \sigma) \ne 0$.
Given a cell complex $X$, we denote $C_* (X) = (C_q(X),\partial_q)_{q\in \Z}$ the based chain complex such that $X_q$ is the fixed basis of $C_q$, for all $q$, with differentials $\partial_q : C_q \to C_{q-1}$ defined on each $\tau \in X_q$ by
\[ \partial_q (\tau) = \sum_{\sigma\in X_{q-1}} \kappa (\tau ,\sigma) \sigma .
\]
We observe that $C_*(X)$ is the zero chain complex if $X=\emptyset$.

A graded set $A=\bigsqcup_{q\in \N} A_q$ is called a \emph{subcomplex} of $X$ if, for all $\tau \in A$, every cell $\sigma \in X$ such that $\sigma \le \tau$ is also in $A$. This property makes $A$, endowed with the restriction of the incidence function of $X$, a cell complex, and is equivalent to requiring $C_* (A)$ to be a chain subcomplex of $C_* (X)$.
We denote by $H_q(X) := \ker \partial_q / \im \partial_{q+1}$ the homology $\F$-modules of $C_*(X)$, and by $H_q(X,A)$ the homology $\F$-modules of the relative chain complex $C_*(X,A)$.

We observe that the notion of a cell complex as reviewed above, equivalent to that of a based chain complex, is general enough to include simplicial complexes and cubical complexes, among other widely used combinatorial objects admitting a canonically associated chain complex. If the aim is computing homology, finite CW complexes can also be represented by cell complexes, letting $\kappa (\tau,\sigma)$ be the degree of the attaching map from the boundary of $\tau$ to $\sigma$.

\subsection{Multifiltrations and multiparameter persistence}
\label{subsec:multipers}

One of the main mathematical objects of interest in topological data analysis are functors from a poset to the category of finite dimensional vector spaces over a field $\F$. 
Here, we consider the indexing poset $\N^n$, for some integer $n\ge 1$, equipped with the coordinate-wise partial order: for $u=(u_i),v=(v_i)\in \N^n$, we write $u\preceq v$ if and only if $u_i \le v_i$, for all $1\le i\le n$. 
In this article, an $n$-\emph{parameter persistence module} is a functor from the poset $(\N^n,\preceq)$ with values in finite-dimensional $\F$-vector spaces. Morphisms between such functors are the natural transformations. 
Explicitly, an $n$-parameter persistence module $V$ consists of a family $\{ V^u \}_{u\in \N^n}$ of $\F$-vector spaces together with a family $\{ \varphi^{u,v} : V^u \to V^v \}_{u\preceq v \in \N^n}$ of linear maps such that $\varphi^{u,w} = \varphi^{v,w} \circ \varphi^{u,v}$ whenever $u\preceq v\preceq w$, and $\varphi^{u,u} = \id_{V^u}$, for all $u$. 
A {\em morphism} between two $n$-parameter persistence modules $\{ V^u ,\varphi^{u,v} \}$ and $\{ W^u ,\psi
^{u,v} \}$ is a family of linear maps $\{ \nu^u : V^u \to W^u \}_{u\in \N^n}$ such that $\nu^v\circ \varphi^{u,v}=\psi^{u,v}\circ \nu^u$, for all $u\preceq v$ in $\N^n$. A morphism $\nu$ is an {\em isomorphism} ({\em monomorphism}, {\em epimorphism}, respectively) if, and only if, its components $\nu^u$ are bijective (injective, surjective), for all $u\in \N^n$. 

In topological data analysis, the typical source of persistence modules are filtrations of cell complexes associated with the data. 
An $n$-\emph{filtration} of a cell complex $X$ is a family $\{ X^u \}_{u \in \N^n}$ of subcomplexes of $X$ such that $u\preceq v$ implies $X^u \subseteq X^v$. 
If a cell $\sigma$ of $X$ is an element of $X^u \smallsetminus \bigcup_{j=1}^n X^{u-e_j}$, we say that $u$ is an {\em entrance grade} of $\sigma$ in the filtration. In this article we assume, unless otherwise stated, that filtrations $\{ X^u \}_{u \in \N^n}$ are families of sublevel sets $X^u = \{ \sigma \in X \mid h(\sigma) \preceq u \}$ of some order-preserving function $h:(X,\le) \to (\N^n ,\preceq)$, with $\le$ denoting the face partial order on $X$. This assumption is equivalent to requiring every cell of $X$ to have exactly one entrance grade, and will only be lifted in Section \ref{sect:not-one-crit}, where we discuss applications to general $n$-filtrations.

The filtrations we are considering are usually called \emph{one-critical} \cite{carlsson2009computing} in topological data analysis. We want to highlight that assuming the uniqueness of entrance grades is fundamental in order to obtain the results of Section \ref{sec:support_xi} and Section \ref{sec:2param}, which are false for general filtrations of cell complexes (but can be adapted as explained in Section \ref{sect:not-one-crit}). 
For instance, in this article we repeatedly use the following fact. 
\begin{rmk}
\label{rmk:one-criticality}
Given a one-critical $n$-filtration $\{ X^u \}_{u\in \N^n}$ and a finite set of filtration grades $\{ u_j  \}_{j=1,\ldots,k} \subseteq \N^n$, with  $u_j = (u_{j,1}, \ldots ,u_{j,n})$ for all $j$, we have
$\bigcap_{j=1}^k X^{u_j} = X^{w}$,
where $w = \bigwedge \{u_j \}_j = (\min \{u_{j,1}\}_j, \ldots ,\min \{u_{j,n}\}_j)$ is the greatest lower bound of the subset $\{ u_j  \}_{j=1,\ldots,k}$ in $\N^n$. 
In particular, for each subset $\alpha \subseteq [n]$, we have the equality 
$\bigcap_{j\in \alpha} X^{u-e_j} = X^{u-e_\alpha}$. 
\end{rmk}

We are interested in persistence modules obtained as the homology of an $n$-filtration.
Given an $n$-filtration $\{ X^u \}_{u \in \N^n}$ and applying the $q$th homology functor, one obtains the $n$-\emph{parameter persistent $q$th-homology module} ${V}_q=\{ {V}_q^u, \iota^{u,v}_{q}\}_{u\preceq v\in \N^n}$, with ${V}_q^u :=H_q (X^u)$ and 
$\iota^{u,v}_{q} \colon H_q(X^u) \to H_q (X^v)$ induced by the inclusion maps $X^u \hookrightarrow X^v$ for $u\preceq v$. We note that it is common to use the terms \emph{multifiltration} and \emph{multiparameter} in place of, respectively, $n$-filtration and $n$-parameter, to indicate the generic case when $n>1$. Moreover, $2$-filtrations are also called \emph{bifiltrations}.  

The overall purpose of this work is to study the relation between the homological invariants of multiparameter persistent homology modules called Betti tables and the multifiltrations from which they are obtained. To this aim, we adopt some tools and terminology from commutative algebra. An $n$-\emph{graded module} over the polynomial ring $S:=\F [x_1, \ldots ,x_n]$ is an $S$-module with a vector space decomposition $V = \bigoplus_{u\in \N^n}V^u$ such that $x_i\cdot V^u \subseteq V^{u+e_i}$, for all $u\in \N^n$ and $i\in [n]$. There is a standard equivalence \cite{carlsson2009theory} between the category of $n$-parameter persistence modules and the category of $n$-graded $S$-modules, allowing us to view a persistence module $\{ V^u ,\varphi^{u,v} \}$ as the $n$-graded $S$-module $\bigoplus_{u\in \N^n} V^u$, where the action of $S$ is defined by $x_i \cdot z = \varphi^{u,u+e_i} (z)$, for all $z\in V^u$ and $i\in [n]$.
Standard homological invariants from commutative algebra, like the \emph{Betti tables} (also called \emph{multigraded Betti numbers}, see Section \ref{subsec:resolutions}), were among the first ones studied in multiparameter persistence \cite{carlsson2009theory,knudson2008refinement}. 
Given an $n$-parameter persistent homology module $\{ V_q^u ,\iota_q^{u,v} \}$, obtained as the $q$th homology of an $n$-filtration, we view it as the finitely generated $n$-graded $S$-module $V_q=\bigoplus_{u\in \N^n} V_q^u$ and denote its $i$th Betti table by $\xi_i^q$, for $i\in \{ 0,1, \ldots ,n\}$. We recall that its Betti tables are functions $\xi_i^q :\N^n \to \N$ defined by 
\[ \xi_i^q (u) \coloneqq \dim (\Tor_i^{S}(V_q,\F ))^u,
\]
for all $u\in \N^n$. 
Explicitly, $\xi^q_i (u)$ is the dimension (as an $\F$-vector space) of the piece of grade $u$ of the $n$-graded $S$-module $\Tor_i^{S}(V_q,\F )$. In Section \ref{sec:Koszul} we give an equivalent definition of the Betti tables based on the Koszul complex.

\subsection{Multigraded modules and free resolutions}
\label{subsec:resolutions}

We now briefly review free resolutions of $n$-graded modules over the polynomial ring $S:=\F [x_1, \ldots ,x_n]$. In this article, all $n$-graded $S$-modules are assumed to be finitely generated. Homomorphisms $f:V\to W$ between $n$-graded $S$-modules are assumed to be $n$-graded, meaning that they preserve grades: $f(V^u) \subseteq W^u$, for all $u\in \N^n$. We refer to \cite[Ch.\ 1]{miller2005combinatorial} and to texts like \cite{eisenbud2005geometry, peeva2010graded} for further details. 

For an $n$-graded $S$-module $V$ and for $a\in \Z^n$, we denote by $V(a)$ the module such that $V(a)^u=V^{u+a}$ for all $u\in \N^n$, called the {\em shift} of $V$ by $a$. The module $S(-a)$ is the  {\em free $S$-module on one generator} at grade $a\in \N^n$. It is isomorphic to the principal monomial ideal $\langle x^a \rangle$, where $x^a$ denotes the monomial $x_1^{a_1}\cdots x_n^{a_n}$. An $n$-graded $S$-module is called \emph{free} if it is isomorphic to $\bigoplus_{j=1}^r S(-a_j)$ for some $r\in \N$ and $a_j\in \N^n$. For a free module, $r$ and $\{a_j\}_{j=1}^r$ are uniquely determined. 

As an example related to the multifiltrations of Section \ref{subsec:multipers}, one can consider the persistence module $\{C_q(X^u),f_q^{u,v}\}_{u\preceq v \in \N^n}$, where the maps $f_q^{u,v}:C_q(X^u)\hookrightarrow C_q(X^v)$ are induced by the inclusions $X^u\hookrightarrow X^v$, and regard it as the $n$-graded $S$-module $C_q=\bigoplus_{u\in\N^n}C_q(X^u)$. If the $n$-filtration $\{ X^u \}_{u\in \N^n}$ is one-critical, then $C_q$ is free, isomorphic to $\bigoplus_{\sigma \in X_q} S(-v_{\sigma})$, where $v_{\sigma}$ denotes the unique entrance grade of the $q$-cell $\sigma$. The differential $\partial_q:C_q \to C_{q-1}$ is an example of an $n$-graded homomorphism between $n$-graded $S$-modules, whose component in grade $u$ is $\partial_q \colon C_q(X^u)\to C_{q-1}(X^u)$, for all $u\in\N^n$.

An ($n$-graded) \emph{free resolution} of an $n$-graded $S$-module $V$ is a sequence
\[
\cdots \to F_{\ell} \xrightarrow{\phi_{\ell}} F_{\ell-1} \xrightarrow{\phi_{\ell-1}} \cdots \xrightarrow{\phi_{2}} F_{1} \xrightarrow{\phi_{1}} F_{0} \to 0
\]
of $n$-graded free $S$-modules and $n$-graded homomorphisms which is exact at degree $i$ (that is, $\ker \phi_i =\im \phi_{i+1}$) for all $i>0$, and such that $\coker \phi_1 = V$. An exact sequence $\cdots \xrightarrow{\phi_{2}} F_{1} \xrightarrow{\phi_{1}} F_{0} \xrightarrow{\varepsilon} V \to 0$ is called an \emph{augmented} free resolution of $V$, with the $n$-graded homomorphism $\varepsilon$ called an \emph{augmentation}. The smallest integer $\ell$ (if it exists) for which $F_i = 0$ for every $i > \ell$ is called the length of the resolution. By Hilbert's Syzygy Theorem, every finitely generated $n$-graded $S$-module $V$ admits a free resolution with length $\ell \le n$. 

A free resolution is called \emph{minimal} if the image of each homomorphism $\phi_i$ is contained in $\langle x_1,\ldots ,x_n\rangle F_{i-1}$, where $\langle x_1,\ldots ,x_n\rangle$ denotes the homogeneous maximal ideal of $S$. 
Minimal free resolutions are unique up to isomorphism, and they are an invariant of the isomorphism type of $V$. In particular, the number of summands $S(-u)$ in $F_i$, for every $u\in \N^n$ and $i\in \{0,1,\ldots ,n\}$, is a well-defined invariant of $V$, and it coincides with the value at $u$ of the $i$th \emph{Betti table} (or \emph{multigraded Betti number}), $\xi_i (u) := \dim (\Tor_i^{S}(V,\F ))^u$. 
To see this, recall that, by definition, $\Tor_i^{S}(V,\F )$ can be determined by applying the functor $-\otimes_S \F$ to a free resolution of $V$ and taking the $i$th homology of the resulting chain complex of $n$-graded $S$-modules. Choosing a minimal free resolution of $V$, the homomorphisms $\phi_i \otimes_S \F$ are all zero, hence $\Tor_i^{S}(V,\F ) = F_i \otimes_S \F$ has in grade $u$ an $\F$-vector space of dimension equal to the number of summands $S(-u)$ in $F_i$, for all $u\in \N^n$.

A \emph{free presentation} of an $n$-graded $S$-module $V$ is an $n$-graded homomorphism $\phi_1 \colon F_1\to F_0$ between free $n$-graded $S$-modules $F_1$ and $F_0$ such that $\coker \phi_1=V$. In this article, we will occasionally refer to the augmented sequence $F_1 \xrightarrow{\phi_1} F_0 \to V \to 0$, which is exact at $F_0$ and $V$, as a free presentation of $V$. A free presentation of $V$ is called \emph{minimal} if it is the portion (in degrees 1 and 0) of a minimal free resolution of $V$.

\subsection{Discrete Morse theory and multifiltrations}
\label{subsec:DMT}

Discrete Morse theory, developed by Forman \cite{Forman1998}, is an adaptation of smooth Morse theory \cite{milnor1963morse} to a combinatorial framework. In its original formulation, it allows, given a regular CW complex, to construct a homotopy equivalent CW complex with a smaller number of cells. Building on Forman's work, discrete Morse theory has been formulated in purely algebraic terms for based chain complexes \cite{kozlov2005discrete} and in more general frameworks \cite{skoldberg2006combinatorial,jollenbeck2009resolution}. In this algebraic setting, the aim is to decompose a chain complex into a smaller complex and an acyclic complex. As explained in Section \ref{subsec:chaincm}, one can always take an equivalent combinatorial perspective by considering the cell complexes associated with based cell complexes. We briefly present here the main ideas of algebraic discrete Morse theory in the setting of this work.

Let $C_*(X)$ be the chain complex associated with a cell complex $X=\bigsqcup_q X_q$, 
and let $<$ be the covering face relation on $X$ introduced in Section \ref{subsec:chaincm}.
A pair of cells $(\sigma,\tau)\in X\times X$ with $\sigma < \tau$ is called a \emph{discrete vector}.
A \emph{discrete vector field} $\V$ on $X$ is a collection of discrete vectors $\V = \{ (\sigma_j, \tau_j)\}_{j\in J}$ such that all cells appearing in $\V$ (indifferently as the first or the second component of a vector) are different. A discrete vector field $\V$ determines a partition of $X$ into three graded subsets $M,S,T$, where $M$ is the set of unpaired cells, called \emph{critical cells}, and $S$ (respectively, $T$) is the set of cells appearing in $\V$ as first (respectively, second) components of a discrete vector. The subsets $M,S,T$ inherit the grading by dimension of the cells of $X$, so that for example $M=\bigsqcup_q M_q$. 
A $\V$-\emph{path} between two cells $\sigma$ and $\sigma'$ is a sequence $(\sigma_0,\tau_0,\sigma_1,\tau_1,\ldots ,\sigma_{r-1},\tau_{r-1},\sigma_r)$ with $r\ge 1$ such that $\sigma_0 = \sigma$, $\sigma_r =\sigma'$, each $(\sigma_i,\tau_i)$ is a discrete vector of $\V$, and  $ \sigma_{i+1} < \tau_i$. The $\V$-path is called \emph{closed} if $\sigma_r = \sigma_1$ and \emph{trivial} if $r=1$. A discrete vector field $\V$ is a \emph{discrete gradient vector field} (also called an \emph{acyclic matching} or a \emph{Morse matching}) when all closed $\V$-paths are trivial.

The core result of discrete Morse theory \cite{Forman1998} can be algebraically stated as follows \cite{KACZYNSKI199859,skoldberg2006combinatorial,jollenbeck2009resolution}.  

\begin{thm}
\label{thm:dfv-reduction}
Let $C_*(X)=(C_q(X), \partial_q)_{q \in \Z}$ be
the chain complex associated with a cell complex $X=\bigsqcup_q X_q$ and let $\V = \{(\sigma_j, \tau_j)\}_{j\in J}$ be a discrete gradient vector
field on $X$. Then $C_*(X)$ is chain homotopy equivalent to $C_*(M)=(C_q(M),\partial^M_q)_{q \in \Z}$, where $M=\bigsqcup_q M_q$ is the set of critical cells and  $\partial^M$ is a differential determined by $\partial$ and $\V$.
\end{thm}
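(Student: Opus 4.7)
The plan is to exhibit an explicit chain homotopy equivalence from $C_*(X)$ to a chain complex supported on $C_*(M)$ equipped with an induced differential $\partial^M$. The partition $X_q = M_q \sqcup S_q \sqcup T_q$ produced by $\V$ gives a vector space decomposition $C_q(X) = C_q(M) \oplus C_q(S) \oplus C_q(T)$, with respect to which $\partial_q$ may be written as a $3 \times 3$ block of linear maps. Denote by $\partial_{ST}: C_q(T) \to C_{q-1}(S)$ the component obtained by restricting $\partial_q$ to $C_q(T)$ and then projecting onto $C_{q-1}(S)$.

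The crucial step is to show that $\partial_{ST}$ is an isomorphism in every degree. Since $\V$ provides a bijection between $T_q$ and a basis of the relevant subspace of $C_{q-1}(S)$, the domain and codomain have equal finite dimension, so injectivity suffices. With rows and columns indexed by $\V$-pairs $(\sigma_j,\tau_j)$ with $\tau_j \in T_q$, the $(k,j)$-entry of $\partial_{ST}$ equals $\kappa(\tau_j, \sigma_k)$, which is nonzero only when $\sigma_k < \tau_j$; the diagonal entries $\kappa(\tau_j,\sigma_j)$ are nonzero by definition of a discrete vector. Moreover, the relation ``$\sigma_k$ is a face of $\tau_j$ with $k \neq j$'' is precisely what allows a $\V$-path starting at $\sigma_j$ to continue through the pair $(\sigma_k,\tau_k)$, so the acyclicity of closed $\V$-paths implies that the directed graph on pairs with edges $j \to k$ whenever $\sigma_k < \tau_j$ is acyclic. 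Choosing a linear extension of this order, the matrix of $\partial_{ST}$ becomes triangular with nonzero diagonal and is therefore invertible over $\F$.

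With $\partial_{ST}^{-1}$ in hand, a standard algebraic deformation retract (Gaussian elimination on chain complexes) produces the equivalence: define
\[
\partial^M := \partial_{MM} - \partial_{MT}\,\partial_{ST}^{-1}\,\partial_{SM},
\]
and define maps $f: C_*(X) \to C_*(M)$, $g: C_*(M) \to C_*(X)$ and a chain homotopy $h: C_*(X) \to C_{*+1}(X)$ by explicit formulas in the projections onto the three summands and the inverse $\partial_{ST}^{-1}$. A direct block-matrix computation then shows that $\partial^M \circ \partial^M = 0$, that $f$ and $g$ are chain maps, $fg = \id_{C_*(M)}$, and $gf - \id_{C_*(X)} = \partial h + h\partial$.

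The main obstacle is the invertibility of $\partial_{ST}$: this is the step that converts the combinatorial acyclicity hypothesis on $\V$ into the linear-algebraic ingredient powering the reduction. Once the inverse is available, the chain map and homotopy identities reduce to routine verification on each of the nine blocks of the differential.
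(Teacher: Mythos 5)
The paper does not supply a proof of this theorem: it states it as a background result and points to \cite{KACZYNSKI199859,skoldberg2006combinatorial,jollenbeck2009resolution}. Your outline is correct and is essentially the standard argument from that literature, often called Gaussian elimination on chain complexes. The key step is exactly right: acyclicity of $\V$ means the directed graph on $\V$-pairs with an edge $j\to k$ whenever $\sigma_k<\tau_j$ and $k\ne j$ has no cycles, so a topological sort renders the square block $\partial_{ST}\colon C_q(T)\to C_{q-1}(S)$ (square because $\V$ gives a bijection $T_q\leftrightarrow S_{q-1}$) triangular with nonzero diagonal entries $\kappa(\tau_j,\sigma_j)$, hence invertible over $\F$. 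The Schur-complement formula $\partial^M=\partial_{MM}-\partial_{MT}\,\partial_{ST}^{-1}\,\partial_{SM}$ together with the explicit $f$, $g$, $h$ then establishes the deformation retraction by routine block computation. The one point to make explicit in a careful write-up is that the matrix and its linear ordering are chosen degree by degree over finite index sets, which is guaranteed by the standing finiteness hypothesis on $X$; this is cosmetic and does not affect the soundness of the argument.
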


We call $C_*(M)$ the \emph{(discrete) Morse chain complex} of $C_*(X)$ associated with $\V$. Let us stress that in general $C_*(M)$ is not a chain subcomplex of $C_*(X)$, since its differential $\partial^M$ is not simply induced by restriction by  the differential $\partial$ of $C_*(X)$. The details on how $\partial^M$ is (uniquely) determined by $\partial$ and $\V$ can be found in  \cite{skoldberg2006combinatorial,jollenbeck2009resolution}.
Equivalently, a cell complex structure on the set $M=\bigsqcup_q M_q$, called the \emph{(discrete) Morse complex} of $X$ associated with $\V$, is determined by the incidence function of $X$ and $\V$ \cite{KACZYNSKI199859}. In general, $M$ is not a subcomplex of $X$.

Discrete Morse theory of filtered chain complexes has been studied in a series of works related to one-parameter \cite{mischaikow2013morse} or multiparameter persistent homology \cite{allili2017reducing}. In the remainder of this subsection, we present the main ideas of discrete Morse theory for multifiltrations.

Consider an $n$-filtration $\{X^u\}_{u\in \N^n}$ of a cell complex $X$, which determines a filtration $\{C_* (X^u)\}_{u\in \N^n}$ of the chain complex $C_*(X)$. 
Given a discrete gradient vector field $\V$ on $X$, there are clearly induced filtrations $\{M^u\}_{u\in \N^n}$ on the Morse complex $M$ and $\{C_* (M^u)\}_{u\in \N^n}$ on the Morse chain complex $C_*(M)=(C_q(M),\partial^M_q)$. 
In general, the former is only a filtration of sets and the latter is only a filtration of graded $\F$-vector spaces, as the differential $\partial^M$ may fail to be compatible with the filtration. 
To avoid this, one can require the discrete gradient vector field to interact nicely with the multifiltration on $X$.
\begin{defn}
A discrete gradient vector field $\V$ on $X$ is \emph{consistent} with a multifiltration
$\{X^u\}_{u\in \N^n}$ if, for all
$(\sigma ,\tau)$ in $\V$ and all $u\in \N^n$,
$\sigma \in X^u$
if and only if
$\tau \in X^u$.
\end{defn}
If $\V$ is consistent with the multifiltration
$\{X^u\}_{u\in \N^n}$, then $\{C_* (M^u)\}_{u\in \N^n}$ is a filtration of chain subcomplexes of $C_*(M)$ \cite{mischaikow2013morse,allili2017reducing}. 
Equivalently, $\{M^u\}_{u\in \N^n}$ is a filtration of subcomplexes of $M$.
Moreover, the persistent homology modules associated with the multifiltrations of $X$ and its Morse complex are isomorphic (in the sense of Section \ref{subsec:multipers}).

\begin{prop}[Lemma~3.10 in \cite{allili2017reducing}]
\label{prop:iso_pers_mod}
Let $\V$ be a discrete gradient vector field on a cell complex $X$ consistent with an $n$-filtration
$\{X^u\}_{u\in \N^n}$, and let $\{M^u\}_{u\in \N^n}$ be the $n$-filtration induced on the Morse complex $M$.
Then, for any $q\in \N$, the persistence modules obtained as $q$-th homology of the $n$-filtrations $\{ X^u \}_{u\in \N^n}$ and $\{ M^u \}_{u\in \N^n}$ are isomorphic. 
\end{prop}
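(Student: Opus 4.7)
The plan is to apply Theorem \ref{thm:dfv-reduction} pointwise over $\N^n$ and then verify that the resulting isomorphisms are natural in $u$, so that together they assemble into an isomorphism of persistence modules.

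First, for each $u \in \N^n$ I set $\V^u \= \{(\sigma,\tau) \in \V : \sigma \in X^u\}$. Consistency of $\V$ with the filtration gives $\sigma \in X^u \Leftrightarrow \tau \in X^u$ for any $(\sigma,\tau) \in \V$, so $\V^u$ is a well-defined discrete vector field on the cell complex $X^u$. Acyclicity is inherited, because any nontrivial closed $\V^u$-path would be a nontrivial closed $\V$-path, contradicting the hypothesis on $\V$. The critical cells of $\V^u$ form exactly the graded set $M \cap X^u = M^u$. Applying Theorem \ref{thm:dfv-reduction} to $(X^u, \V^u)$ therefore yields a chain homotopy equivalence $C_*(X^u) \simeq C_*(M^u)$ and, in every degree $q$, an isomorphism $\Phi^u_q : H_q(X^u) \xrightarrow{\cong} H_q(M^u)$.

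Next I need to check that for every $u \preceq v$ the square with vertical arrows $\Phi^u_q, \Phi^v_q$, top arrow $\iota^{u,v}_q$, and bottom arrow the map induced by $M^u \hookrightarrow M^v$ commutes. For this I would use the explicit chain-level description of the equivalence of Theorem \ref{thm:dfv-reduction} available in \cite{skoldberg2006combinatorial, jollenbeck2009resolution}, where the chain maps $f : C_*(X) \to C_*(M)$ and $g : C_*(M) \to C_*(X)$ are given by sums over $\V$-paths. The key observation is a localization property: if $\sigma \in X^u$, every $\V$-path $(\sigma_0, \tau_0, \ldots, \sigma_r)$ with $\sigma_0 = \sigma$ lies entirely in $X^u$, since consistency gives $\tau_i \in X^u$ whenever $\sigma_i \in X^u$, and then $\sigma_{i+1} < \tau_i$ forces $\sigma_{i+1} \in X^u$ because $X^u$ is a subcomplex. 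Consequently $f$ and $g$ restrict to chain maps $f^u, g^u$ that are compatible, at the chain level, with the inclusions $C_*(X^u) \hookrightarrow C_*(X^v)$ and $C_*(M^u) \hookrightarrow C_*(M^v)$. Passing to homology yields the required commutative squares.

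The main obstacle lies in this second step: it forces one to open up the black box of Theorem \ref{thm:dfv-reduction} rather than use it as given. A conceptually cleaner alternative is to build the reduction inductively, eliminating one matched pair $(\sigma_j,\tau_j)$ at a time through an algebraic deformation retraction. Each elementary reduction only modifies chains supported on cells connected to $\sigma_j$ or $\tau_j$, so it automatically commutes with the inclusion of any subcomplex that either contains both cells of the pair or neither of them \textemdash\ which is precisely the compatibility enforced by consistency of $\V$ with the filtration.
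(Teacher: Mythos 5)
The paper does not prove this proposition itself; it quotes it as Lemma~3.10 of the cited reference, so there is no internal proof to compare against. Judged on its own, your proposal follows the standard route and is essentially correct.

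Two points deserve sharpening, though. First, in step~1 you apply Theorem~\ref{thm:dfv-reduction} to the pair $(X^u,\V^u)$ and conclude $H_q(X^u)\cong H_q(M^u)$; but the object $H_q(M^u)$ appearing in the statement is the homology of the \emph{sub}complex $C_*(M^u)\subseteq C_*(M)$, with differential the restriction of $\partial^M$, whereas Theorem~\ref{thm:dfv-reduction} produces a Morse chain complex on the set $M^u$ whose differential is built from $\partial|_{X^u}$ and $\V^u$. These two differentials must be checked to coincide. They do, by exactly the localization property you state later: the $\V$-paths entering the formula for $\partial^M$ evaluated on a critical cell of $M^u$ start at facets of that cell, hence lie in $X^u$, hence are $\V^u$-paths, so both constructions count the same paths. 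You invoke localization only for naturality in step~2, but it is needed already in step~1 to make sense of the target; state it once, up front, and use it for both.

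Second, the closing alternative via elementary (one-pair-at-a-time) reductions is a legitimate and arguably cleaner route, and the observation that consistency forces each $X^u$ to contain both or neither cell of a pair is exactly why each elementary reduction is filtration-preserving. But this approach silently requires fixing a linear order on the pairs of $\V$ along which the eliminations can be performed without ever needing a not-yet-eliminated pair; such an order exists precisely because $\V$ is acyclic. That is routine, but should be said. With these two clarifications your argument is complete and matches the type of proof one would find in the cited source.
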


\section{Entrance grades and support of Betti tables via free resolutions}
\label{sec:free}

In this section, we illustrate how methods in multigraded homological algebra based on free presentations and resolutions (see Section \ref{subsec:resolutions}) can be used to derive relations between two different graded subsets of $\N^n$: the set of parameter grades at which new critical cells appear in the one-critical filtration $\{X^u\}_{u\in\N^n}$ of a cell complex $X$, on the one hand, and the set of parameter grades where the Betti tables of the persistent homology module $V_q=\bigoplus_{u\in\N^n}H_q(X^u)$ are nonzero, on the other hand. 
Specifically, we obtain bounds on the support of the $0$th and $1$st Betti table of $V_q$ (Proposition \ref{prop:suppfreepresVq}), and we discuss the immediate consequences of these bounds on the support of Betti tables of higher degrees (Remark \ref{rmk:free-bounds}). We conclude by observing that some of the stronger results we will prove in Section \ref{sec:support_xi} do not immediately follow from this approach. For this reason, we defer the discussion of how our results on the support of Betti table can be combined with discrete Morse theory to Section \ref{subsec:Morse-supp}.

In this section we consider  the following setting.
Let $\{X^u\}_{u\in\N^n}$ be a one-critical $n$-filtration of a cell complex $X$. 
We assume the multifiltration $\{ X^u \}_{u\in \N^n}$ to be \emph{exhaustive}, that is, $X=\bigcup_{u\in \N^n}X^u$. Clearly, since $X$ is graded by the dimension $q$ of cells, this means that $X_q = \bigcup_{u\in \N^n} X^u_q$, for all $q\in \N$. The one-criticality assumption (Section \ref{subsec:multipers}) ensures that the chain complex associated with the filtration $\{X^u\}_{u\in \N^n}$ is made of free $n$-graded modules over the polynomial ring $S:=\F [x_1, \ldots ,x_n]$. More specifically,
for any $q\in\N$, the $n$-graded $S$-module $C_q \coloneqq \bigoplus_{u\in \N^n}C_q(X^u)$ associated with the filtration is free and isomorphic to $\bigoplus_{\sigma \in X_q} S(-v_{\sigma})$, where $v_{\sigma}$ denotes the unique entrance grade of the $q$-cell $\sigma$. The set of all entrance grades (Section \ref{subsec:multipers}) of $q$-cells is denoted by $\G(X_q) \subseteq \N^n$, and its closure with respect to least upper bounds is denoted by $\overline{\G(X_q)}$. Explicitly, $\overline{\G(X_q)}\coloneqq \{\bigvee L \mid L\subseteq \G(X_q), L\ne \emptyset \}\subseteq \N^n$, with $\bigvee L$ denoting the least upper bound of $L$ in $(\N^n,\preceq)$.

By definition, the persistent homology module $V_q=\bigoplus_{u\in\N^n}H_q(X^u)$ is the homology at the middle term of the sequence $C_{q+1}\xrightarrow{\partial_{q+1}}C_{q}\xrightarrow{\; \partial_{q}\;}C_{q-1}$ of free $n$-graded $S$-modules and $n$-graded homomorphisms.
Our aim is constructing a free resolution of $V_q$ that is informative of the relation between the support of the Betti tables and the sets of entrance grades of cells. In this section, we denote by $\xi_i(V)$ the $i$th Betti table of an $n$-graded $S$-module $V$, which we view as a function $\xi_i(V)\colon \N^n \to \N$ with values $\xi_i(V)(u)\coloneqq \dim (\Tor_i^{S}(V,\F ))^u$ defined as detailed in Section \ref{subsec:resolutions}. We drop the module $V$ from the notation of the Betti tables when it is clear from the context. Lastly, let us recall that we use the notation $\xi_i^q \coloneqq \xi_i(V_q)$ for the Betti tables of the persistent homology module $V_q=\bigoplus_{u\in\N^n}H_q(X^u)$, and that we denote by $\supp \xi^q_i := \{ u \in \N^n \mid \xi^q_i (u) \ne 0 \}$ the support of $\xi^q_i$.

We start by considering the following sequence of $n$-graded $S$-modules and $n$-graded homomorphisms,
\[
\begin{tikzcd}
	{C_{q+1}} & {\ker \partial_q} & {V_q=\frac{\ker \partial_q}{\im \partial_{q+1}}}
	\arrow["{\partial_{q+1}}", from=1-1, to=1-2]
	\arrow["h", two heads, from=1-2, to=1-3] ,
\end{tikzcd}
\]
where $h$ is the canonical projection.
This sequence is not a free presentation of $V_q$ in general, since $\ker \partial_q$ is in general not free for $n>2$ and $q>0$. To obtain a free presentation of $V_q$, we consider a free presentation $F_1 \to F_0 \twoheadrightarrow \ker \partial_q$ of $\ker \partial_q$, which we assume to be minimal. This free presentation is the second row in the diagram of $n$-graded $S$-modules
\[
\begin{tikzcd}
	0 & {C_{q+1}} & {C_{q+1}} \\
	{F_1} & {F_0} & {\ker \partial_q} \\
	&& {V_q}
	\arrow[from=1-1, to=1-2]
	\arrow[from=1-1, to=2-1]
	\arrow[Rightarrow, no head, from=1-2, to=1-3]
	\arrow["{\overline{\partial}}", from=1-2, to=2-2]
	\arrow["{\partial_{q+1}}", from=1-3, to=2-3]
	\arrow["{\phi_1}", from=2-1, to=2-2]
	\arrow["\varepsilon", two heads, from=2-2, to=2-3]
	\arrow["h", two heads, from=2-3, to=3-3]
\end{tikzcd}
\]
where the homomorphism $\overline{\partial}$ is a lift of $\partial_{q+1}$, which exists since $C_{q+1}$ is free (hence projective) and $\varepsilon$ is surjective. A free presentation of $V_q$ is then given by
\begin{equation}\label{eq:freepresVqq}
\begin{tikzcd}[column sep=large] 
	{C_{q+1}\oplus F_1} & {F_0} & {V_q}
	\arrow["{[\overline{\partial} \; \phi_1]}", from=1-1, to=1-2]
	\arrow["h\varepsilon",twoheadrightarrow, from=1-2, to=1-3],
\end{tikzcd}
\end{equation}
where $[\overline{\partial} \; \phi_1]$ denotes the $n$-graded homomorphism sending $(c,x)\in C_{q+1}\oplus F_1$ to $\overline{\partial}(c) + \phi_1(x) \in F_0$. To see that $\coker [\overline{\partial} \; \phi_1] = V_q$, we observe that the composition $h\varepsilon$ is surjective, and that its kernel coincides with $\im [\overline{\partial} \; \phi_1] = \im \overline{\partial} + \im \phi_1$. 

Our goal is approximating the sets of grades of the generators of the free modules $F_0$ and $C_{q+1}\oplus F_1$, which are the sets $\supp \xi_0 (F_0)$ and $\supp \xi_0 (C_{q+1}\oplus F_1)$, respectively. In Proposition \ref{prop:suppfreepresVq} we state bounds in terms of the sets $\G(X_{q})$ and $\G(X_{q+1})$. To prove these bounds, we need some results on free resolution of $n$-graded $S$-modules. 

First, we state a result whose proof can be found for example in \cite[Lemma 2.1]{vipond2020local} or, in a slightly different setting, in \cite[Corollary~4.2]{chacholski2024koszul}.

\begin{prop}\label{prop:xifreeres}
Let $V$ be a (finitely generated) $n$-graded $S$-module. Then the supports of its Betti tables satisfy the containments $\supp \xi_{i+1} \subseteq \overline{\supp \xi_{i}}$, for all $i\ge 1$.
\end{prop}

Next, we need a result on the structure of free resolutions. The proofs presented in \cite[Theorem~7.5]{peeva2010graded} or \cite[p.~6]{eisenbud2005geometry} carry over to the multigraded case.
\begin{prop}\label{prop:minsummand}
Every $n$-graded free resolution of an $n$-graded $S$-module $V$ is isomorphic to the direct sum of a minimal free resolution of $V$ and short trivial complexes of the form $0\to S(-u)\xrightarrow{\id} S(-u)\to 0$, with $u\in\N^n$, possibly involving different homological degrees.
\end{prop}

The following is a useful consequence of Propositions~\ref{prop:xifreeres} and \ref{prop:minsummand}.

\begin{coro}\label{coro:xiker}
Let $K$ be the kernel of an $n$-graded homomorphism $f:V\to W$ of $n$-graded $S$-modules, where $V$ is free. Then $\supp \xi_1(K) \subseteq \overline{\supp \xi_0 (K)}$.     
\end{coro}
\begin{proof}
Let $F_*=(\cdots \to F_1\xrightarrow{\phi_1} F_0 \to 0)$ be an $n$-graded minimal free resolution of $K$. The augmented free resolution $\cdots \to F_1\xrightarrow{\phi_1} F_0 \xrightarrow{\varepsilon} K \to 0$
can be composed with the canonical monomorphism $K \xhookrightarrow{\iota} V$ to form the sequence
\begin{equation}\label{eq:freeresimf}
\cdots \to F_1\xrightarrow{\phi_1} F_0 \xrightarrow{\iota \varepsilon} V\to 0
\end{equation}
of free $n$-graded $S$-modules, which can be viewed as a (non-necessarily minimal) free resolution of the module $\im f\cong V/K\cong \coker \iota\varepsilon $. By Proposition \ref{prop:minsummand}, the free resolution (\ref{eq:freeresimf}) of $\im f$ is isomorphic to a minimal free resolution $P_* = (\cdots \to P_2\to P_1\to P_0 \to 0)$ plus a direct sum of short trivial complexes. By minimality of $F_*$, a short trivial complex $0\to S(-u)\xrightarrow{\id} S(-u)\to 0$ which is a direct summand of (\ref{eq:freeresimf}) can only have nonzero modules in homological degrees $i=0,1$ (using indices as in $P_*$). Since Betti tables count the multiplicity of free summands $S(-u)$ at each grade $u\in \N^n$ and each homological degree of a minimal free resolution (see Section \ref{subsec:resolutions}), this implies that $\supp \xi_1(\im f)\subseteq \supp \xi_0 (K)$ and that $\supp \xi_{i+1}(\im f)= \supp \xi_{i} (K)$ for $i\ge 1$, which together with Proposition \ref{prop:xifreeres} gives
\[
\supp \xi_1(K) = \supp \xi_2 (\im f) \subseteq \overline{\supp \xi_1 (\im f)} \subseteq \overline{\supp \xi_0 (K)}.
\]
\end{proof}

We are now ready to prove bounds for the grades of the generators of the free modules appearing in the free presentation (\ref{eq:freepresVqq}). 

\begin{prop}\label{prop:suppfreepresVq}
The containments $\supp \xi_0 (F_0) \subseteq \overline{\G (X_q)}$ and $\supp \xi_0 (C_{q+1}\oplus F_1) \subseteq \G (X_{q+1}) \cup \overline{\G (X_q)}$ hold for the modules in the free presentation (\ref{eq:freepresVqq}) of $V_q$.
\end{prop}
\begin{proof}
We start with an argument similar to the one used in the proof of Corollary \ref{coro:xiker}.
Let $F_*=(\cdots \to F_1\xrightarrow{\phi_1} F_0 \to 0)$ be a minimal free resolution of $\ker \partial_q$. The augmented exact sequence $\cdots \to F_1\xrightarrow{\phi_1} F_0 \xrightarrow{\varepsilon} \ker \partial_q \to 0$
can be spliced with the exact sequence $0\to \ker \partial_q \xrightarrow{\iota} C_q\xrightarrow{\partial_q} C_{q+1}$ to form the sequence 
\begin{equation}\label{eq:freerescoker}
\cdots \to F_1\xrightarrow{\phi_1} F_0 \xrightarrow{\iota \varepsilon} C_q\xrightarrow{\partial_q} C_{q+1} \to 0
\end{equation}
of free $n$-graded $S$-modules, which can be viewed as a (non-necessarily minimal) free resolution of the module $\coker \partial_q$. By Proposition \ref{prop:minsummand}, the free resolution (\ref{eq:freerescoker}) is isomorphic to a minimal free resolution $P_* = (\cdots \to P_3\to P_2\to P_1\to P_0 \to 0)$ plus a direct sum of short trivial complexes. We observe that a short trivial complex $0\to S(-u)\xrightarrow{\id} S(-u)\to 0$ with nonzero modules in homological degrees $i=2,3$ cannot be a direct summand of (\ref{eq:freerescoker}), by minimality of the free resolution $F_*$ of $\ker \partial_q$. For this reason, the containment $\supp \xi_0(P_2)\subseteq \overline{\supp \xi_0 (P_1)}$, obtained by applying Proposition \ref{prop:xifreeres} to $P_*$ with $i=1$, implies the containment $\supp \xi_0(F_0)\subseteq \overline{\supp \xi_0 (C_q)}$. The first containment of the claim follows by recalling that the set $\supp \xi_0 (C_q)$ of grades of the generators of $C_q$ coincides with $\G(X_q)$ by definition.

We now consider the set $\supp \xi_0 (C_{q+1}\oplus F_1)=\supp \xi_0 (C_{q+1})\cup \supp \xi_0 (F_1)$. Again by definition, we have $\supp \xi_0 (C_{q+1})=\G(X_{q+1})$. We therefore focus on the set $\supp \xi_0 (F_1)$ and observe that
\[
\supp \xi_0 (F_1)=\supp \xi_1(\ker \partial_q) \subseteq \overline{\supp \xi_0(\ker \partial_q)},
\]
where the equality is by definition of Betti tables via minimal resolutions (Section \ref{subsec:resolutions}), and the containment is by Corollary \ref{coro:xiker}. The second containment of the claim then follows from the equality $\xi_0 (\ker \partial_q)=\xi_0 (F_0)$ and from the first part of the proof.
\end{proof}
\begin{rmk}\label{rmk:free-bounds}
Since by Proposition \ref{prop:minsummand} the free presentation (\ref{eq:freepresVqq}) of $V_q$ contains a minimal free presentation as a direct summand, Proposition \ref{prop:suppfreepresVq} yields the following two containments: 
\[
\supp \xi_0^q \subseteq \overline{\G(X_q)}, \qquad \supp \xi_1^q \subseteq \G(X_{q+1})\cup  \overline{\G(X_q)}.
\]
We recall that the support of the $1$st Betti table determines a bound for the support of all Betti tables of positive degree, since $\bigcup_{i=1}^n \supp \xi_i^q \subseteq \overline{\supp \xi_1^q}$. This general fact for $n$-graded $S$-modules is observed for example in \cite[Remark~3.2]{charalambous2015betti}, and follows from Proposition \ref{prop:xifreeres}. Using this fact, we immediately see that
\begin{equation*}\label{eq:bound-by-res}
\bigcup_{i=1}^n \supp \xi^q_i \subseteq \overline{\G (X_{q+1}) \cup \overline{\G (X_q)}}.
\end{equation*}
\end{rmk}

In Section \ref{sec:support_xi}, we will obtain the containments for $\supp \xi_0^q$ (Theorem \ref{thm:supp_xi_m}) and $\supp \xi_1^q$ (Corollary \ref{coro:H1K}) with an alternative method based on the Koszul complex, which will allow us to improve some of the statements regarding the support of higher Betti tables.
In Theorem \ref{thm:supp_xi_m}, we will prove the stronger statement $\bigcup_{i=0}^n \supp \xi^q_i   \subseteq \overline{\G (X_{q+1})} \cup \overline{\G (X_{q})}$, together with the containment $\supp \xi^q_n   \subseteq \overline{\G (X_{q+1})}$ for the support of the $n$th Betti table.

\section{The Koszul complex of a persistence module}
\label{sec:Koszul}

In this section, we describe the Koszul complex associated with an $n$-parameter persistence module and illustrate some of its properties. 
In particular, given an $n$-parameter persistent homology module $\{ H_q (X^u), \iota_q^{u,v}\}$, we introduce its Koszul complex at $u\in \N^n$, a chain complex whose $i$th homology module has dimension equal to the Betti table value $\xi^q_i (u)$. This chain complex can be constructed via a repeated procedure which allows us to add one parameter of the multifiltration at a time.

In Section \ref{subsec:Kosz1}, upon briefly recalling general definitions and results, we provide a more detailed description of Koszul complexes of multiparameter persistent homology modules. We claim no original results in this subsection, as the Koszul complex is a standard tool, and the explicit description of its chain modules and differentials in the case of persistent homology modules is included, for example, in \cite[Sect.~3]{guidolin2023morse}. Here, besides fixing notations, we provide further details, especially with regard to bifiltrations, that are relevant to this work. 

In Section \ref{sec:mappingcones}, we explain how the Koszul complex associated with an $n$-parameter persistence module can be constructed as an iterated mapping cone, and we highlight the role of this construction for persistent homology modules, which intuitively allows one to disentangle the different parameters of the multifiltration and study their impact on the Betti tables. In Section \ref{sec:support_xi}, we will apply this technique to study the support of the Betti tables.

\subsection{The Koszul complex of a multigraded module}
\label{subsec:Kosz1}

Let $S$ denote the polynomial ring $\F[x_1,\ldots ,x_n]$. We recall that, for any subset $\alpha \subseteq [n]$, we set $e_{\alpha}\= \sum_{j\in \alpha} e_j \in \{0,1\}^n$. The \emph{Koszul complex} $\mathbb{K}_*$ is a  chain complex of free $n$-graded $S$-modules whose construction is standard in commutative algebra (cf.\ \cite[Def.~1.26]{miller2005combinatorial}): 
for each $i$, let $\mathbb{K}_{i} \= \bigoplus_{\alpha \subseteq [n], \; |\alpha|=i} S(-e_{\alpha})$, where $S(-e_{\alpha})$ denotes the free $S$-module generated in grade $e_{\alpha}$ by an element we denote $1_{\alpha}$, for some $\alpha = \{ j_1 < j_2 <\ldots < j_i \}$. The differentials $d^K_i : \mathbb{K}_{i} \to \mathbb{K}_{i-1}$ are defined on generators by
\begin{equation*} 
\label{eq:dKos} 
    d^K_i (1_{\alpha}) = \sum_{r=0}^{i-1} (-1)^{r} x_{j_{i-r}} \cdot 1_{\alpha \smallsetminus \{ j_{i-r} \}} .
\end{equation*} 
Given an $n$-graded $S$-module $V=\bigoplus_{u\in \N^n}V^u$, the \emph{Koszul complex} $\mathbb{K}_*(x_1,\ldots ,x_n; V)(u)$ \emph{of $V$ at grade $u\in\N^n$} is the piece of grade $u$ of the ($n$-graded) chain complex $V \otimes_S \mathbb{K}_*$. 
This chain complex of $\F$-vector spaces can be used to determine the Betti tables $\xi_i (u) := \dim (\Tor_i^{S}(V,\F ))^u$ of $V$ at grade $u$, for $i\in \{0,1,\ldots ,n\}$.
Indeed, by definition, $\Tor_i^{S}(V,\F )$ can be determined by applying the functor $-\otimes_S \F$ to a free resolution of $V$ and taking $i$th homology of the resulting chain complex (see Section \ref{subsec:resolutions}). 
The roles of $V$ and $\F$ can however be interchanged, by virtue of the isomorphism $\Tor_i^{S}(V,\F ) \cong \Tor_i^{S}(\F, V)$  (see, e.g., \cite[Thm.~7.1]{rotman2009homological}); in this case, choosing $\mathbb{K}_*$ as a (minimal) free resolution of $\F$ (see \cite[Prop.~1.28]{miller2005combinatorial}) yields, for all $i\in \{0,1,\ldots ,n\}$, the equality
\[
\xi_i (u) = \dim H_i (\mathbb{K}_*(x_1,\ldots ,x_n; V)(u)) . 
\]

Let us now provide a more explicit description of the Koszul complex $\mathbb{K}_*(x_1,\ldots ,x_n; V_q)(u)$ of a persistent homology module $\{ H_q(X^u), \iota_q^{u,v} \}$ associated with an $n$-parameter filtration $\{ X^u\}_{u\in \N^n}$, regarded as an $n$-graded $S$-module $V_q = \bigoplus_{u\in \N^n}H_q (X^u)$ (as reviewed in Section \ref{subsec:multipers}). 
Even if this description of the Koszul complex can be easily adapted to any $n$-parameter persistence module, not necessarily built from a filtered cell complex, we prefer to focus on the case of interest for this work in order to clearly introduce the notations we will use in what follows.

For each $i\in \{0,1,\ldots , n \}$, the chain module in degree $i$ of  $\mathbb{K}_*(x_1,\ldots ,x_n; V_q)(u)$ is
\[
\mathbb{K}_i (x_1,\ldots ,x_n; V_q)(u) = \bigoplus_{\alpha \subseteq [n], \; |\alpha|=i} H_q (X^{u-e_{\alpha}}) .
\] 
The definition can be easily extended if, for some fixed $u\in \N^n$ and some $\alpha \subseteq [n]$, it happens that $u-e_{\alpha} \notin \N^n$: throughout this article, by definition, we set $X^w=\emptyset$ whenever the grade $w$ is not in $\N^n$. 
Note that the modules $\mathbb{K}_i (x_1,\ldots ,x_n; V_q)(u)$ are zero for all $i\notin \{0,1,\ldots , n \}$.
The differentials of $\mathbb{K}_* (x_1,\ldots ,x_n; V_q)(u)$ are defined in terms of the maps $\iota^{v,w}_{q}: H_q (X^v) \to H_q (X^w)$ as follows: the differential
\[
d_i : \mathbb{K}_i (x_1,\ldots ,x_n; V_q)(u) \to \mathbb{K}_{i-1} (x_1,\ldots ,x_n; V_q)(u)
\]
is defined as the alternating sum $d_{i} = \sum_{r=0}^{i-1} (-1)^r d_{i,r}$ of functions  $d_{i,r} \colon \mathbb{K}_i (x_1,\ldots ,x_n; V_q)(u) \to \mathbb{K}_{i-1} (x_1,\ldots ,x_n; V_q)(u)$  mapping the summand $H_q(X^{u-e_{\alpha}})$ in $\mathbb{K}_i (x_1,\ldots ,x_n; V_q)(u)$, with $\alpha = \{ j_1 < j_2 <\ldots < j_i \}$, to the summand $H_q(X^{u-e_{\alpha}+e_{j_{i-r}}})$ in $\mathbb{K}_{i-1} (x_1,\ldots ,x_n; V_q)(u)$, 
via the function $\iota_q^{u-e_{\alpha}, \, u-e_{\alpha}+e_{j_{i-r}}} $.
For the sake of a simpler notation, we avoid denoting the grade $u$ in the differentials $d_i$. 
As we explained, $\xi^q_i(u)$ coincides with the dimension (as an $\F$-vector space) of the $i$th homology module of $\mathbb{K}_*(x_1,\ldots ,x_n; V_q)(u)$.

Let us detail the cases of $n=1$ and $n=2$ parameters for later convenience. 
For a $1$-parameter filtration $\{ X^u\}_{u\in \N}$, the Koszul complex $\mathbb{K}_* (x_1; V_q)(u)$ of $V_q = \bigoplus_{u\in \N}H_q (X^u)$ at $u\in \N$ is
\[
0 \xrightarrow{} H_q(X^{u-1}) \xrightarrow{d_1 = \iota_q^{u-1,u}} H_q(X^{u}) \xrightarrow{} 0 .
\]
The Betti tables at grade $u$ are $\xi_0^q (u) = \dim \coker \iota_q^{u-1,u}$ and $\xi_1^q (u) =  \dim \ker \iota_q^{u-1,u}$, which correspond respectively to the number of \emph{births} and \emph{deaths} of $q$-homology classes at $u\in \N$ in the sense of persistence \cite{Edelsbrunner2002}. 

For a $2$-parameter filtration $\{ X^u\}_{u\in \N^2}$, the Koszul complex $\mathbb{K}_* (x_1,x_2; V_q)(u)$ of the module $V_q = \bigoplus_{u\in \N^2}H_q (X^u)$ at $u\in \N^2$ is  
\begin{equation*}
\label{eq:Kosz1}
0 \xrightarrow{} H_q(X^{u-e_1 -e_2})
\xrightarrow{d_{2}} H_q(X^{u-e_{1}})\oplus H_q(X^{u-e_{2}}) \xrightarrow{d_{1}}  H_q(X^{u}) \xrightarrow{}  0 ,
\end{equation*}
with differentials
\begin{align*}
d_2 &= \begin{bmatrix}
 - \iota_q^{u-e_1-e_2,u-e_1} \\
  \iota_q^{u-e_1-e_2,u-e_2}
\end{bmatrix} 
\qquad \text{and} \qquad 
d_1 = [\iota_q^{u-e_1,u} \quad \iota_q^{u-e_2,u}]  .
\end{align*}
The Betti tables at the grade $u$ are
\begin{equation*}  
\xi_2^q(u) =  \dim \ker d_2 ,  \qquad
\xi_1^q(u) =  \dim (\ker d_1 / \im d_2) ,  \qquad
\xi_0^q(u) =  \dim \coker d_1 .
\end{equation*}

A morphism $\nu = \{ \nu^u : V^u \to W^u \}_{u\in \N^n}$ between $n$-parameter persistence modules $\{ V^u ,\varphi^{u,v} \}$ and $\{ W^u ,\psi
^{u,v} \}$ induces a chain map between the Koszul complexes of $V=\bigoplus_{u\in \N^n}V^u$ and $W=\bigoplus_{u\in \N^n}W^u$ at $u\in \N^n$, the morphism between the chain modules in degree $i$ being $\bigoplus_{|\alpha|=i} \nu^{u-e_{\alpha}}$, with $\alpha \subseteq [n]$. 
Moreover, since taking finite direct sums preserves short exact sequences of vector spaces, taking the Koszul complex at any fixed $u$ is an exact functor, meaning that a short exact sequence $0\to U \xrightarrow{\mu} V \xrightarrow{\nu} W \to 0$ of $n$-parameter persistence modules induces a short exact sequence of Koszul complexes
\[
0\to \mathbb{K}_*(x_1,\ldots ,x_n; U)(u) \to \mathbb{K}_*(x_1,\ldots ,x_n; V)(u) \to \mathbb{K}_*(x_1,\ldots ,x_n; W)(u) \to 0 .
\]

Clearly, an isomorphism between persistence modules induces an isomorphism between their Koszul complexes. In what follows, we will apply this observation in the particular case of a multifiltration $\{ X^u \}_{u\in \N^n}$ of $X$ and the induced multifiltration $\{ M^u \}_{u\in \N^n}$ of its Morse complex.
By virtue of Proposition \ref{prop:iso_pers_mod}, since the modules $V_q := \bigoplus_{u\in \N^n}H_q (X^u)$ and $V'_q := \bigoplus_{u\in \N^n}H_q (M^u)$ are isomorphic, their Koszul complexes $\mathbb{K}_*(x_1,\ldots ,x_n; V_q)(u)$ and $\mathbb{K}_*(x_1,\ldots ,x_n; V'_q)(u)$ are also isomorphic, at all $u\in \N^n$. As a consequence, the Betti tables $\xi_i^q(u)$ can be determined considering the Morse complex instead of the original complex.

\subsection{Explicit construction via mapping cones}
\label{sec:mappingcones}

We now illustrate the explicit construction of the Koszul complex $\mathbb{K}_* (x_1,\ldots ,x_n; V_q)(u)$ of $V_q$ at grade $u\in \N^n$ as an iterated mapping cone. The classical construction of the Koszul complex via mapping cones can be found in \cite[\S~A2F]{eisenbud2005geometry} and \cite[Ch.~1.6]{bruns1998cohen}; here we rephrase, adapt, and enrich it with examples, to provide a complete and explicit treatment for Koszul complexes of persistent homology modules that conveys the intuition of persistent homology.

Given a chain map $f: B_* \to C_*$, the \emph{mapping cone} $\cone (f)_*$ of $f$ is the chain complex with $\cone (f)_i \= B_{i-1}\oplus C_i$ and differential $\delta_i : B_{i-1}\oplus C_i \to B_{i-2}\oplus C_{i-1}$ defined by
\begin{equation}
\label{eq:def_diffCone}
\delta_i (b,c) \= (-\partial^B_{i-1}(b), \; \partial^C_i (c) + f_{i-1}(b)),
\end{equation}
for all $i$, with $b\in B_{i-1}, c\in C_{i}$ and $\partial^B, \partial^C$ respectively denoting the differentials of $B_*$ and $C_*$, see \cite[\S~A3.12]{eisenbud1995commutative}. 

Let $\mathcal{F}:=\{ X^u\}_{u\in \N^n}$ be an $n$-filtration of a cell complex $X$.
As is evident from the definitions in Section \ref{subsec:Kosz1}, the
Koszul complex $\mathbb{K}_*(x_1,\ldots ,x_n; V_q)(u)$ of the associated persistent homology module $V_q = \bigoplus_{u\in \N^n}H_q (X^u)$ at the fixed grade $u\in \N^n$ only depends on the subcomplexes $X^{u-e_{\alpha}}$ of the filtration, with $\alpha \subseteq [n]$. 
In other words, to determine $\mathbb{K}_*(x_1,\ldots ,x_n; V_q)(u)$ it is enough to consider the smaller $n$-filtration $\mathcal{F}^u :=\{X^{u-e_{\alpha}}\}_{\alpha \subseteq [n]}$, containing $2^{n}$ subcomplexes of the original $n$-filtration $\mathcal{F}$. 
We observe that, fixed any $j\in [n]$, the $n$-filtration $\mathcal{F}^u$ can be partitioned into $2^{n-1}$  $1$-filtrations $X^{u-e_{\alpha}-e_j} \subseteq X^{u-e_{\alpha}}$, one for each $\alpha \subseteq [n] \smallsetminus \{j\}$. 
More generally, fixed any non-empty subset $J\=\{j_1,\ldots ,j_t\} \subseteq [n]$, there is a partition of $\mathcal{F}^u$ consisting of $2^{n-t}$ $t$-filtrations of the form $\{X^{u-e_{\alpha}-e_{\gamma}}\}_{\gamma \subseteq J}$, one for each $\alpha \subseteq [n]\smallsetminus J$. 
Every such $t$-filtration has an associated Koszul complex $\mathbb{K}_*(x_{j_1},\ldots ,x_{j_t}; V_q)(u-e_{\alpha})$ that intuitively only encodes information  on the parameters $j_1,\ldots ,j_t$ of the $n$-filtration $\mathcal{F}^u$. 
Given $k \in [n]\smallsetminus J$, regarded here as an additional parameter  to be taken into account, one can consider the $(t+1)$-filtration given by the union of two $t$-filtrations $\{X^{u-e_{\alpha}-e_{\gamma}}\}_{\gamma \subseteq J}$ and $\{X^{u-e_{\alpha}-e_k -e_{\gamma}}\}_{\gamma \subseteq J}$, for any $\alpha \subseteq [n]\smallsetminus (J\cup \{k\})$. Below, we will explain how the Koszul complex associated with such $(t+1)$-filtration can be constructed as the mapping cone of a chain map between the two Koszul complexes associated with the $t$-filtrations.

We begin by illustrating in detail the first few steps of the procedure based on iterated mapping cones to construct the Koszul complex $\mathbb{K}_* (x_1,\ldots ,x_n; V_q)(u)$ starting from ``$1$-parameter'' Koszul complexes ``in direction  $e_j$''
\begin{equation*}
\mathbb{K}_* (x_j;V_q)(w) = \left( 0 \xrightarrow{} H_q(X^{w-e_j}) \xrightarrow{d_1 = \iota_q^{w-e_j,w}} H_q(X^{w}) \xrightarrow{} 0 \right) ,
\end{equation*}
for any fixed $j\in [n]$ and for $w = u -e_{\alpha}$ with $\alpha \subseteq [n]\smallsetminus \{ j\}$, and from specific chain maps between them. The chain maps are those induced by inclusions ``in direction $e_k$'', for any fixed $k\in [n]\setminus \{j \}$, that is 
\[
f^k(x_{j}; V_q)(w-e_k) : \mathbb{K}_* (x_{j}; V_q)(w-e_k) \to \mathbb{K}_* (x_{j}; V_q)(w) ,
\]
with $f_i^k(x_{j}; V_q)(w-e_k) : \mathbb{K}_i (x_{j}; V_q)(w-e_k) \to \mathbb{K}_i (x_{j}; V_q)(w)$ defined, for degrees $i=0,1$, as
\begin{align*} 
f_0^k(x_{j}; V_q)(w-e_k) &= \iota_q^{w-e_k, w} : H_q(X^{w-e_k}) \xrightarrow{} H_q(X^{w}) , \\
f_1^k(x_{j}; V_q)(w-e_k) &= \iota_q^{w-e_{j}-e_k, w-e_{j}} : H_q(X^{w-e_j-e_k})\xrightarrow{} H_q(X^{w-e_j}) .
\end{align*}
The mapping cone $\cone (f^k(x_{j}; V_q)(w-e_k))_*$ is the Koszul complex $\mathbb{K}_* (x_{j},x_k; V_q)(w)$, 
associated with the $2$-filtration $\{X^{w-e_{\gamma}}\}_{\gamma \subseteq \{j,k\}}$. Intuitively, it is obtained from the previous step, where only the $j$th parameter was considered,  by adding one parameter more, namely the $k$th parameter of the original $n$-filtration.
Explicitly, $\mathbb{K}_* (x_{j},x_k; V_q)(w)$ is the chain complex
\begin{equation*}
0 \xrightarrow{} H_q(X^{w-e_j -e_k}) \xrightarrow{\; d_2 \;} H_q(X^{w-e_j}) \oplus H_q(X^{w-e_k})  \xrightarrow{\; d_1 \;} H_q(X^{w})
\xrightarrow{} 0
\end{equation*}
where the differentials, applying the definition (\ref{eq:def_diffCone}), are 
\begin{align*}
d_2 &= \begin{bmatrix}
           - \iota_q^{w-e_j-e_k,w-e_j} \\
           \iota_q^{w-e_j-e_k,w-e_k}
         \end{bmatrix} 
\qquad \text{and} \qquad  
d_1  = [\iota_q^{w-e_j,w} \quad \iota_q^{w-e_k,w}] .
\end{align*}

The process we just described can be repeated, by choosing a new ``direction'' $e_\ell$ corresponding to a new parameter $\ell \in [n] \smallsetminus \{j,k\}$ and constructing $\mathbb{K}_* (x_{j},x_k,x_\ell; V_q)(w)$ as the mapping cone of the chain map $f^{\ell}(x_{j},x_{k}; V_q)(w-e_\ell)$ induced by inclusions in direction $e_\ell$, for each $w=u-e_{\alpha}$ with $\alpha \subseteq [n]\smallsetminus \{ j,k,\ell \}$. 
Explicitly, $f^{\ell}(x_{j},x_{k}; V_q)(w-e_\ell)$ is defined by the following maps, in degrees $i=0,1,2$:
\begin{align*} 
f_0^{\ell}(x_{j},x_{k}; V_q)(w-e_\ell) &= \iota_q^{w-e_\ell, w}, \\
f_1^{\ell}(x_{j},x_{k}; V_q)(w-e_\ell) &= \iota_q^{w-e_j -e_\ell, w-e_j}\oplus \iota_q^{w-e_k -e_\ell, w-e_k}, \\
f_2^{\ell}(x_{j},x_{k}; V_q)(w-e_\ell) &= \iota_q^{w-e_j -e_k-e_\ell, w-e_j -e_k}.
\end{align*}
If the order in which the indeterminates are added is changed, one obtains isomorphic chain complexes: for example, $\mathbb{K}_* (x_{j},x_k,x_\ell; V_q)(w)$ is isomorphic to $\mathbb{K}_* (x_{j},x_\ell,x_k; V_q)(w)$. At the last step, one obtains $\mathbb{K}_* (x_1,\ldots,x_n; V_q)(u)$ as the mapping cone of the chain map
$f^m(x_1 , \ldots , \hat{x}_m, \ldots , x_n; V_q)(u-e_m)$ between $\mathbb{K}_* (x_1 , \ldots , \hat{x}_m, \ldots , x_n; V_q)(u-e_m)$ and $\mathbb{K}_* (x_1 , \ldots , \hat{x}_m, \ldots , x_n; V_q)(u)$.

Thanks to the iterative nature of the process, we can provide 
an explicit description of $\mathbb{K}_* (x_{j_1},\ldots ,x_{j_t}; V_q)(u)$ for any $u\in \N^n$ and any non-empty subset $J\=\{j_1,\ldots ,j_t\} \subseteq [n]$. 
For each $i\in \{0,1,\ldots ,|J| \}$, the chain module in degree $i$ is
\[
\mathbb{K}_i (x_{j_1},\ldots ,x_{j_t}; V_q)(u) = \bigoplus_{\gamma \subseteq J, \; |\gamma|=i} H_q (X^{u-e_{\gamma}}).
\]
The modules $\mathbb{K}_i (x_{j_1},\ldots ,x_{j_t}; V_q)(u)$ are zero for all $i\notin \{0,1,\ldots , |J| \}$.
The differentials of the chain complex $\mathbb{K}_* (x_{j_1},\ldots ,x_{j_t}; V_q)(u)$ can be described as follows: the differential 
\[
d_i : \mathbb{K}_i (x_{j_1},\ldots ,x_{j_t}; V_q)(u) \to \mathbb{K}_{i-1} (x_{j_1},\ldots ,x_{j_t}; V_q)(u)
\]
is the alternating sum $d_{i} = \sum_{r=0}^{i-1} (-1)^r d_{i,r}$, where $d_{i,r} \colon \mathbb{K}_i (x_{j_1},\ldots ,x_{j_t}; V_q)(u) \to \mathbb{K}_{i-1} (x_{j_1},\ldots ,x_{j_t}; V_q)(u)$ is the function mapping each summand $H_q(X^{u-e_{\gamma}})$ of $\mathbb{K}_i (x_{j_1},\ldots ,x_{j_t}; V_q)(u)$, with $\gamma = \{ j_{s(1)}, \ldots , j_{s(i)} \}$ and $s(1)<\cdots < s(i)$, to the summand $H_q(X^{u-e_{\gamma}+e_{j_{s(i-r)}}})$ in $\mathbb{K}_{i-1} (x_{j_1},\ldots ,x_{j_t}; V_q)(u)$
via the function $\iota_q^{u-e_{\gamma}, \, u-e_{\gamma}+e_{j_{s(i-r)}}}$.

For any $k \in [n]\smallsetminus J$,
the Koszul complex $\mathbb{K}_* (x_{j_1},\ldots ,x_{j_t}, x_k; V_q)(u)$ is the mapping cone of the chain map induced by inclusions in direction $e_k$,
\[
f^k(x_{j_1},\ldots ,x_{j_t}; V_q)(u-e_k) : \mathbb{K}_* (x_{j_1},\ldots ,x_{j_t}; V_q)(u-e_k) \to \mathbb{K}_* (x_{j_1},\ldots ,x_{j_t}; V_q)(u) ,
\]
which for each degree $i\in \{ 0,1,\ldots , |J|\}$ is defined by 
\[
f_i^k(x_{j_1},\ldots ,x_{j_t}; V_q)(u-e_k) = \bigoplus_{\gamma \subseteq J, \; |\gamma|=i} \iota_q^{u-e_{\gamma}-e_k, \, u -e_{\gamma}} .
\]

In Section \ref{sec:support_xi}, several results will be obtained by showing certain mapping cones to be acyclic, i.e.\ having vanishing homology in all degrees.  
We recall the following immediate consequence of \cite[Prop.~A3.19]{eisenbud1995commutative} (see also \cite[Corollary~1.5.4]{weibel1994introduction}), which gives an equivalent condition to the acyclicity of a mapping cone.

\begin{prop}
\label{prop:coroWeibel}
A chain map $f:B_* \to C_*$ is a quasi-isomorphism (i.e., it induces isomorphisms $H_q(B_*)\cong H_q(C_*)$ in homology, for all $q\in\Z$) if and only if $\cone (f)_*$ is acyclic.
\end{prop}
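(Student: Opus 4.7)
The plan is to reduce the statement to the long exact sequence in homology attached to the short exact sequence of chain complexes
\[
0 \longrightarrow C_* \longrightarrow \cone(f)_* \longrightarrow B_*[-1] \longrightarrow 0,
\]
where $B_*[-1]$ denotes $B_*$ shifted so that $(B_*[-1])_i = B_{i-1}$, and the arrows are the canonical inclusion $c \mapsto (0,c)$ and projection $(b,c)\mapsto b$. The key observation is that these maps are chain maps with respect to the differential $\delta$ of $\cone(f)_*$ defined in equation \eqref{eq:def_diffCone}, so that the sequence of chain complexes is exact in each degree.

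First I would check that the sequence above is a short exact sequence of chain complexes; this is a direct verification from the formula $\delta(b,c) = (-\partial^B(b),\, \partial^C(c) + f(b))$, noting in particular that the projection onto $B_*[-1]$ is a chain map because $\delta$ restricted to the $B$-component is (up to sign) $\partial^B$. Next I would write down the resulting long exact sequence in homology,
\[
\cdots \longrightarrow H_q(C_*) \longrightarrow H_q(\cone(f)_*) \longrightarrow H_{q-1}(B_*) \xrightarrow{\;\partial\;} H_{q-1}(C_*) \longrightarrow \cdots
\]

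The main step, and the one I expect to be the real content of the argument, is to identify the connecting homomorphism $\partial \colon H_{q-1}(B_*) \to H_{q-1}(C_*)$ with the map induced by $f$ in homology (up to a sign). This is a standard diagram chase: given a cycle $b\in B_{q-1}$, lift it to $(b,0)\in \cone(f)_q$, apply $\delta$ to obtain $(-\partial^B(b), f(b))=(0,f(b))$, and observe that $f(b)\in C_{q-1}$ represents $\partial [b]$, so $\partial=H_{q-1}(f)$ up to sign. Once this identification is made, exactness of the long sequence gives both implications at once: if $H_*(f)$ is an isomorphism in every degree, then the connecting maps are isomorphisms, forcing $H_q(\cone(f)_*)=0$ for every $q$; conversely, if $\cone(f)_*$ is acyclic, then each connecting homomorphism is both injective and surjective, so $H_*(f)$ is an isomorphism. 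This is precisely the content of \cite[Prop.~A3.19]{eisenbud1995commutative} and \cite[Cor.~1.5.4]{weibel1994introduction}, which the paper cites.

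Since the statement is classical and appears in the cited references, in a write-up it would be appropriate to either give the short diagrammatic argument above or simply invoke the references. The only technical subtlety to be careful about is the sign convention in \eqref{eq:def_diffCone}, which propagates through the identification of the connecting homomorphism; this does not affect the conclusion since we only care about whether the map is an isomorphism.
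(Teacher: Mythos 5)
Your proof is correct and is exactly the standard textbook argument underlying the references the paper invokes (Eisenbud, Prop.~A3.19, and Weibel, Cor.~1.5.4): the paper itself gives no proof of this proposition, citing those sources instead. Your identification of the connecting homomorphism in the long exact sequence of $0 \to C_* \to \cone(f)_* \to B_*[-1] \to 0$ with $H_*(f)$ (up to sign) is the whole content, and the degree bookkeeping and sign remark are handled correctly.
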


\begin{coro}
\label{coro:mapping_acyclic}
Let $f:B_* \to C_*$ be a chain map, and let $B_*$ and $C_*$ be acyclic. Then $\cone (f)_*$ is acyclic.
\end{coro}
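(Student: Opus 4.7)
The statement follows almost immediately from Proposition \ref{prop:coroWeibel}, so my plan is essentially a one-line reduction. Since $B_*$ and $C_*$ are acyclic, we have $H_q(B_*) = 0 = H_q(C_*)$ for every $q \in \Z$. Consequently, the map $H_q(f): H_q(B_*) \to H_q(C_*)$ induced by $f$ is the zero map between zero vector spaces, which is trivially an isomorphism. Thus $f$ is a quasi-isomorphism, and Proposition \ref{prop:coroWeibel} delivers at once that $\cone(f)_*$ is acyclic.

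If one preferred a self-contained argument avoiding explicit reference to Proposition \ref{prop:coroWeibel}, the alternative I would use is the standard long exact sequence in homology associated with the short exact sequence of chain complexes
\[
0 \to C_* \to \cone(f)_* \to B_*[-1] \to 0 ,
\]
where $B_*[-1]$ denotes the shift of $B_*$. This yields, for each $q$, an exact segment
\[
H_q(C_*) \to H_q(\cone(f)_*) \to H_{q-1}(B_*) ,
\]
and the vanishing of the two outer terms (by hypothesis) forces the middle term to vanish too.

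There is no substantive obstacle: the work was done in Proposition \ref{prop:coroWeibel}, and this corollary is just the specialization to the case where the source and target of the quasi-isomorphism criterion are both zero in homology. I would write the proof in the paper in the first form, as a direct invocation of the preceding proposition, to keep the exposition compact.
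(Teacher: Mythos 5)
Your primary argument is exactly the paper's proof: observe that a map between acyclic complexes is automatically a quasi-isomorphism, then invoke Proposition \ref{prop:coroWeibel}. The alternative long-exact-sequence argument you sketch is a valid self-contained substitute, but since you yourself recommend the first form, the proposal matches the paper's approach.
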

\begin{proof}
If $B_*$ and $C_*$ are acyclic, the chain map $f$ must be a quasi-isomorphism.
\end{proof}

\section{Entrance grades and support of Betti tables via Koszul complexes}
\label{sec:support_xi}

In this section, we resume the investigation, started in Section \ref{sec:free}, of the relations between the set of entrance grades of cells in a one-critical filtration $\{X^u\}_{u\in\N^n}$, and the set of grades where the Betti tables of the persistent homology module $V_q=\bigoplus_{u\in\N^n}H_q(X^u)$ are nonzero. The main tool of the approach we propose is the Koszul complex.
In Section \ref{subsec:suppallBetti} we prove a result (Theorem \ref{thm:supp_xi_m}) on the support of Betti tables $\xi_i^q$ of all degrees $i\in\{0,\ldots ,n\}$ which improves the results of Section \ref{sec:free}. In Section \ref{subsect:xi1npar}, we focus on the Betti table $\xi_1^q$, stating a sufficient condition for its vanishing at a given grade in terms of the submodules of cycles and boundaries of $V_q$ (Theorem \ref{thm:H1K}). This result can be used to better approximate the support of $\xi_1^q$. In Section \ref{subsec:Morse-supp}, we explain how the presented results can be combined with reductions of the filtered cell complex via discrete Morse theory.

Our fixed setting for the whole section will be as in Section \ref{sec:free}. For the reader's convenience, we briefly recall it. Let $\{ X^u \}_{u\in \N^n}$ be a one-critical (Section \ref{subsec:multipers}) and exhaustive $n$-parameter filtration of a cell complex $X$, which is also graded by the dimension $q$ of cells.
To study the connections with discrete Morse theory (in Section \ref{subsec:Morse-supp}), we consider a fixed discrete gradient vector field $\V$ consistent with the filtration (see Section \ref{subsec:DMT}), and denote by $\{ M^u \}_{u\in \N^n}$ the associated $n$-parameter filtration of the Morse complex $M$. Extending a notation used in Section \ref{sec:free}, we denote set of entrance grades (Section \ref{subsec:multipers}) of a non-empty subset $\Gamma$ of cells of $X$ by
\[
\G (\Gamma) \= \{ \text{entrance grades of the cells of $\Gamma$}\} \subseteq \N^n . 
\]
We denote by $\overline{G}$ the closure of a non-empty subset $G\subseteq \N^n$ with respect to the least upper bound in $\N^n$, which is the set $\overline{G}:=\{\bigvee L \mid L\subseteq G, L\ne \emptyset \}\subseteq \N^n$. Moreover, we recall that $\supp \xi^q_i := \{ u \in \N^n \mid \xi^q_i (u) \ne 0 \}$ denotes the support of the $i$th Betti table $\xi^q_i : \N^n \to \N$ of the persistent homology module $V_q=\bigoplus_{u\in\N^n}H_q(X^u)$. 
Lastly, we establish a notation that will be used throughout this section and state two simple results that will be instrumental in studying the support of the Betti tables using Koszul complexes.
\begin{notation}
\label{not:w}
Having fixed a grade $u\in \N^n$, for any $\alpha \subseteq [n]$ we set $w(\alpha):= u - e_{\alpha}$, where $e_{\alpha}:=\sum_{j\in \alpha} e_j$.
\end{notation}
\begin{lem}\label{lem:isos}
Let $A,B,C,D$ be subspaces of a vector space $V$ over the field $\F$. Suppose that $B\subseteq A\subseteq C$ and $B\subseteq D\subseteq C,$ and let $f:\frac{A}{B}\to \frac{C}{D}$ be the linear map induced by the inclusion of $A$ in $C$. Then there are canonical isomorphisms
\[
\ker f = \frac{A\cap D}{B}, \qquad \im f \cong \frac{A}{A\cap D} \cong  \frac{A+D}{D}, \qquad \coker f \cong \frac{C}{A+D}.
\]
\end{lem}
\begin{proof}
Let $\varphi$ denote the composition $A\hookrightarrow C \twoheadrightarrow \frac{C}{D}$ of the canonical injection and projection. The map $f$, induced by $\varphi$ on the quotient, is well defined since $\ker \varphi = A\cap D \supseteq B$, and satisfies $\ker f = \frac{\ker \varphi}{B}=\frac{A\cap D}{B}$ and $\im f = \im \varphi \cong \frac{A}{\ker \varphi}$, see e.g.\ \cite[p.~19]{Atiyah1969}. The remaining canonical isomorphisms of the claim are obtained via the standard isomorphism theorems \cite[Prop.~2.1]{Atiyah1969}.
\end{proof}
\begin{lem}\label{lem:exact0}
Let $A \xrightarrow{f} B \xrightarrow{g} C \xrightarrow{h} D \xrightarrow{i} E$ be an exact sequence of vector spaces over the fixed field $\F$. Then $C=0$ if, and only if, $f$ is surjective and $i$ is injective. 
\end{lem}
\begin{proof}
If $f$ is surjective and $i$ is injective, then $\ker g = \im f = B$, which implies $\ker h=\im g =0$, and therefore $C\cong C/\ker h \cong \im h = \ker i = 0$.
Conversely, if $C=0$, then $\im f = \ker g =B$ and $\ker i = \im h =0$.
\end{proof}

\subsection{Results on the support of all Betti tables}
\label{subsec:suppallBetti}

Our goal for this subsection is to prove that $\bigcup_{i=0}^n \supp \xi^q_i   \subseteq \overline{\G (X_q)} \cup \overline{\G (X_{q+1})}$ and, moreover, $\supp \xi^q_0   \subseteq \overline{\G (X_q)}$ and $\supp \xi^q_n   \subseteq \overline{\G (X_{q+1})}$, for all $q\in \N$ (Theorem \ref{thm:supp_xi_m}). 
For $\xi_1^q$, the result is improved in Section \ref{subsect:xi1npar}.
We observe that the first inclusion is clearly equivalent to the following statement: if $u\notin \overline{\G (X_q)} \cup \overline{\G (X_{q+1})}$, then $\xi^q_i(u) = 0$, for all $i \in \{ 0,1,\ldots ,n\}$.
To start with, we prove a result that allows us to rephrase the hypothesis of this statement. 

\begin{prop}
\label{prop:equiv_u_lub}
Let $A$ be any subset of cells of $X$ and let $u\in \N^n$. Then $u\notin \overline{\G (A)}$ if and only if there exists $j\in [n]$ such that for any subset $\alpha_j \subseteq [n] \smallsetminus \{ j \}$ it holds $(X^{w(\alpha_j)} \smallsetminus X^{w(\alpha_j)-e_j})\cap A = \emptyset$, where $w(\alpha_j)$ is defined as in Notation \ref{not:w}.
\end{prop}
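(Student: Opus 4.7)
The plan is to reduce the apparently complicated universal condition in $\alpha_j$ to a single, coordinate-wise condition, and then to read off the claim from the definition of the least upper bound in $\N^n$. One-criticality will enter exactly once, through Remark~\ref{rmk:one-criticality}.

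First I would show that, for any $j\in[n]$ and any $\alpha_j\subseteq [n]\smallsetminus\{j\}$, one has the inclusion $X^{w(\alpha_j)}\smallsetminus X^{w(\alpha_j)-e_j}\subseteq X^{u}\smallsetminus X^{u-e_j}$. The inclusion $X^{w(\alpha_j)}\subseteq X^u$ is immediate from $w(\alpha_j)\preceq u$. For the other part, Remark~\ref{rmk:one-criticality} gives $X^{w(\alpha_j)}\cap X^{u-e_j}=X^{w(\alpha_j)\wedge (u-e_j)}=X^{w(\alpha_j)-e_j}$, so any cell of $X^{w(\alpha_j)}$ that belongs to $X^{u-e_j}$ must already belong to $X^{w(\alpha_j)-e_j}$. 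Consequently, the statement ``for every $\alpha_j\subseteq[n]\smallsetminus\{j\}$, $(X^{w(\alpha_j)}\smallsetminus X^{w(\alpha_j)-e_j})\cap A=\emptyset$'' is equivalent to its instance at $\alpha_j=\emptyset$, namely $(X^{u}\smallsetminus X^{u-e_j})\cap A=\emptyset$. Using one-criticality once more, a cell $\sigma$ with unique entrance grade $v$ belongs to $X^{u}\smallsetminus X^{u-e_j}$ if and only if $v\preceq u$ and $v_j=u_j$, so the reduced condition says: no cell of $A$ has entrance grade $v$ with $v\preceq u$ and $v_j=u_j$.

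It then remains to match this with membership in $\overline{\G(A)}$. By definition, $u\in\overline{\G(A)}$ means $u=\bigvee B$ for some non-empty $B\subseteq \G(A)$. The identity $\bigvee B=u$ is equivalent to the two conditions that every $v\in B$ satisfies $v\preceq u$ and that, for each $j\in[n]$, some $v\in B$ satisfies $v_j=u_j$. Hence $u\in\overline{\G(A)}$ holds if and only if, for every $j\in[n]$, there is a cell of $A$ whose entrance grade $v$ satisfies $v\preceq u$ and $v_j=u_j$; to see the ``if'' direction one simply picks, for each $j$, one such grade $v^{(j)}$ and takes $B=\{v^{(1)},\ldots,v^{(n)}\}$, which satisfies $\bigvee B=u$. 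Negating this characterization and combining it with the reduction of the first paragraph yields the claim.

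I do not anticipate a genuine obstacle: the argument is essentially a careful bookkeeping of joins in $\N^n$ and of sublevel sets, with the only nontrivial ingredient being the intersection formula of Remark~\ref{rmk:one-criticality}, which is what allows the replacement of a universal quantifier over $\alpha_j$ by its instance at $\alpha_j=\emptyset$.
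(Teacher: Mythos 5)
Your proof is correct and rests on the same two ingredients as the paper's proof — the coordinate-wise reading of the join in $\N^n$ and the intersection formula from Remark~\ref{rmk:one-criticality} — but it packages them more cleanly. The paper argues by contraposition: the implication from $u\in\overline{\G(A)}$ to the negated condition only ever uses $\alpha_j=\emptyset$, while the converse handles an arbitrary $\alpha_j$ and invokes one-criticality at the end inside a contradiction argument (assuming $\bigvee\{v(j)\}\ne u$ and deriving $\sigma_j\in X^{w(\alpha_j)-e_j}$). You instead front-load the use of one-criticality into a reduction lemma: since $X^{w(\alpha_j)}\cap X^{u-e_j}=X^{w(\alpha_j)\wedge(u-e_j)}=X^{w(\alpha_j)-e_j}$, one gets $X^{w(\alpha_j)}\smallsetminus X^{w(\alpha_j)-e_j}\subseteq X^u\smallsetminus X^{u-e_j}$, so the universal condition over $\alpha_j$ collapses to its $\alpha_j=\emptyset$ instance. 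After that, both implications become direct coordinate bookkeeping (pick, for each $j$, a grade $v^{(j)}\preceq u$ with $v^{(j)}_j=u_j$ and take the join) with no contradiction argument needed. This is a genuinely cleaner decomposition than the paper's. The only minor quibble is your remark that one-criticality ``enters exactly once'': reading membership of a cell in $X^u\smallsetminus X^{u-e_j}$ off its unique entrance grade also relies on the sublevel-set (one-critical) structure, so it is really used twice — but that is cosmetic and does not affect correctness.
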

\begin{proof}
We prove the contrapositive claim, showing the equivalence of the following statements:
\begin{enumerate}
\item $u\in \overline{\G (A)}$.
\item For all $j\in [n]$, there exists a subset $\alpha_j \subseteq  [n] \smallsetminus \{j \}$ such that $\left( X^{w(\alpha_j)} \smallsetminus X^{w(\alpha_j)-e_j} \right) \cap A \ne \emptyset$.
\end{enumerate}

Assume that $u\in \overline{\G (A)}$. If $u\in \G (A)$, we are done by taking $\alpha_j = \emptyset$, for all $j$. If $u\notin \G (A)$, then $u=\bigvee \{ v_1, \ldots ,v_r \}$ with $r\ge 2$ and $v_1,\ldots v_r \in \G (A)$. 
In this case, by definition of the least upper bound, for all $j\in [n]$ there exists $\ell(j) \in [r]$ such that $u-e_j \not\succeq v_{\ell(j)}$. 
Therefore, taking a  cell $\sigma_{\ell(j)}\in A$ with entrance grade $v_{\ell(j)}$, we have $\sigma_{\ell(j)} \in \left( X^{u} \smallsetminus X^{u-e_j} \right) \cap A$, since $u-e_j \not\succeq v_{\ell(j)}$ implies $\sigma_{\ell(j)} \notin X^{u-e_j}$ by one-criticality of the multifiltration (Section \ref{subsec:multipers}). 
The second statement follows again by taking $\alpha_j = \emptyset$, for all $j$.

Conversely, assume that the second statement  holds. For each $j\in [n]$, let $v(j)$ denote the entrance grade of a cell $\sigma_j \in \left( X^{w(\alpha_j)} \smallsetminus X^{w(\alpha_j)-e_j} \right) \cap A$, for some $w(\alpha_j)= u - \sum_{i \in \alpha_j}e_i$. Let $v=\bigvee \{ v(1),\ldots ,v(n) \}$. From $v(j)\preceq w(\alpha_j) \preceq u$, for all $j$, we see that $v\preceq u$. Let us show now that $v=u$, which concludes the proof. If $v\ne u$, then there exists $j\in [n]$ such that $v\preceq u-e_j$. 
Since $\sigma_j$ has entrance grade $v(j)$ and $v(j)\preceq v\preceq u-e_j$, we have  $\sigma_j \in X^{u-e_j}$. On the other hand, we are assuming that $\sigma_j \in X^{w(\alpha_j)}$ with $w(\alpha_j)= u - \sum_{i \in \alpha_j}e_i$ and $j\notin \alpha_j$. The latter condition implies that $w(\alpha_j)$ and $u-e_j$ are not comparable. More precisely, the greatest lower bound of $w(\alpha_j)$ and $u-e_j$ is $w(\alpha_j)-e_j$. Hence, the one-criticality assumption on the multifiltration yields a contradiction (see Remark \ref{rmk:one-criticality}), since we are assuming that $\sigma_j \notin X^{w(\alpha_j)-e_j}$.
\end{proof}

We underline that the one-criticality assumption on the $n$-filtration $\{ X^u \}_{u\in \N^n}$ plays a key role in the proof of Proposition \ref{prop:equiv_u_lub}. 

\begin{coro}
\label{coro:A_Mq}
For any $u\in \N^n$, we have
$u\notin \overline{\G (X_q)}$ if and only if there exists $j\in [n]$ such that
$X_q^{w(\alpha_j)-e_j} = X_q^{w(\alpha_j)}$, for all subsets $\alpha_j \subseteq [n] \smallsetminus \{ j \}$.
\end{coro}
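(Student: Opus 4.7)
The plan is to deduce this corollary directly from Proposition \ref{prop:equiv_u_lub} applied with $A = M_q$, by translating the condition $(X^{w(\alpha_j)} \smallsetminus X^{w(\alpha_j)-e_j}) \cap M_q = \emptyset$ into the stated equality of filtered Morse pieces.

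The key ingredient is the identification of the induced filtration on $M$ with the intersection with the filtration of $X$. Since $\V$ is consistent with $\{X^u\}_{u\in\N^n}$, the induced filtration satisfies $M^v = M \cap X^v$ for every $v\in\N^n$, and in particular $M_q^v = M_q \cap X^v$ for every dimension $q$. This is the only hypothesis beyond Proposition \ref{prop:equiv_u_lub} that is actually used.

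Given $j\in[n]$ and $\alpha_j\subseteq[n]\smallsetminus\{j\}$, the inequality $w(\alpha_j)-e_j \preceq w(\alpha_j)$ yields $M_q^{w(\alpha_j)-e_j}\subseteq M_q^{w(\alpha_j)}$, so using the identification above we obtain
\[
M_q^{w(\alpha_j)} \smallsetminus M_q^{w(\alpha_j)-e_j} \;=\; M_q \cap \bigl( X^{w(\alpha_j)} \smallsetminus X^{w(\alpha_j)-e_j} \bigr).
\]
Hence the condition $(X^{w(\alpha_j)}\smallsetminus X^{w(\alpha_j)-e_j})\cap M_q = \emptyset$ that appears in Proposition \ref{prop:equiv_u_lub} is equivalent to the equality $M_q^{w(\alpha_j)-e_j} = M_q^{w(\alpha_j)}$. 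Substituting this equivalence in the statement of Proposition \ref{prop:equiv_u_lub} specialized to $A = M_q$ delivers precisely the claimed characterization of $u \notin \overline{\G(M_q)}$.

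There is no genuine obstacle: the proof amounts to unfolding notations and invoking the already established Proposition \ref{prop:equiv_u_lub}. The only point to be careful about is that the passage from $X$ to $M$ preserves the one-criticality used in Proposition \ref{prop:equiv_u_lub}, which is automatic here because $M$ inherits its filtration as $M \cap X^{\bullet}$, and each critical cell thus has a unique entrance grade, coinciding with its entrance grade in $X$.
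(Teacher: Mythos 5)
Your proposal is correct and coincides with the paper's (implicit) argument: the corollary is simply Proposition~\ref{prop:equiv_u_lub} specialized to $A = M_q$, and you correctly unfold the condition $(X^{w(\alpha_j)}\smallsetminus X^{w(\alpha_j)-e_j})\cap M_q = \emptyset$ into $M_q^{w(\alpha_j)} = M_q^{w(\alpha_j)-e_j}$ via the identity $M_q^v = M_q \cap X^v$ for the induced filtration. One minor overstatement: one-criticality of the filtration induced on $M$ plays no role here, since Proposition~\ref{prop:equiv_u_lub} only invokes one-criticality of the filtration on $X$ and is applied to the subset $A = M_q$ of cells of $X$, not to a filtration of $M$.
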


Proposition \ref{prop:equiv_u_lub} also yields information on the maps of the persistent homology modules $\{ H_q(X^u), \iota_q^{u,v} \}$ and $\{ H_{q-1}(X^u), \iota_{q-1}^{u,v} \}$ in the ``vicinity'' of a fixed grade $u\notin \overline{\G (X_q)}$.

\begin{coro}
\label{coro:u_not_lub}
If $u\notin \overline{\G (X_q)}$, then there exists $j\in [n]$ such that, for all 
$\alpha_j \subseteq [n] \smallsetminus \{ j\}$, the inclusion $X^{w(\alpha_j)-e_j}\hookrightarrow X^{w(\alpha_j)}$ induces a surjection
\[
\iota_{q}^{w(\alpha_j)-e_j , w(\alpha_j)} : H_{q}(X^{w(\alpha_j)-e_j}) \to H_{q}(X^{w(\alpha_j)})
\]
and an injection
\[
\iota_{q-1}^{w(\alpha_j)-e_j , w(\alpha_j)} : H_{q-1}(X^{w(\alpha_j)-e_j}) \to H_{q-1}(X^{w(\alpha_j)}) .
\]
\end{coro}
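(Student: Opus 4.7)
The plan is to reduce the statement to a simple long exact sequence argument on the Morse complex, then transfer the conclusion back to $X$ via Proposition \ref{prop:iso_pers_mod}.

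First I would invoke Corollary \ref{coro:A_Mq}, which translates the hypothesis $u \notin \overline{\G(M_q)}$ into the combinatorial condition that there exists $j \in [n]$ such that $M_q^{w(\alpha_j)-e_j} = M_q^{w(\alpha_j)}$ for every $\alpha_j \subseteq [n] \smallsetminus \{j\}$. This is the $j$ that I will use in the conclusion of the corollary. Fix any such $\alpha_j$.

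Next, I would consider the pair of cell complexes $(M^{w(\alpha_j)}, M^{w(\alpha_j)-e_j})$ and the associated long exact sequence in homology. The key observation is that the relative chain module in degree $q$,
\[
C_q\bigl(M^{w(\alpha_j)}, M^{w(\alpha_j)-e_j}\bigr) \;=\; C_q\bigl(M^{w(\alpha_j)}\bigr)\big/C_q\bigl(M^{w(\alpha_j)-e_j}\bigr),
\]
has as a basis the set $M_q^{w(\alpha_j)} \smallsetminus M_q^{w(\alpha_j)-e_j}$, which is empty by the choice of $j$. Hence the relative chain module in degree $q$ vanishes, and therefore $H_q(M^{w(\alpha_j)}, M^{w(\alpha_j)-e_j}) = 0$. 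Substituting this vanishing into the long exact sequence
\[
\cdots \to H_q\bigl(M^{w(\alpha_j)-e_j}\bigr) \to H_q\bigl(M^{w(\alpha_j)}\bigr) \to H_q\bigl(M^{w(\alpha_j)}, M^{w(\alpha_j)-e_j}\bigr) \to H_{q-1}\bigl(M^{w(\alpha_j)-e_j}\bigr) \to H_{q-1}\bigl(M^{w(\alpha_j)}\bigr) \to \cdots
\]
immediately yields that the map induced on $H_q$ by inclusion is surjective and the map induced on $H_{q-1}$ is injective.

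Finally, I would transfer the conclusion from $M$ back to $X$. Since $\V$ is consistent with $\{X^u\}_{u\in\N^n}$, Proposition \ref{prop:iso_pers_mod} guarantees that the persistence modules $\{H_q(X^u),\iota_q^{u,v}\}$ and $\{H_q(M^u),\iota_q^{u,v}\}$ are isomorphic. Isomorphisms of persistence modules preserve surjectivity and injectivity of the structure maps component by component, so the just established surjection in degree $q$ and injection in degree $q-1$ between the Morse homologies translate directly into the desired properties of $\iota_q^{w(\alpha_j)-e_j,w(\alpha_j)}$ and $\iota_{q-1}^{w(\alpha_j)-e_j,w(\alpha_j)}$.

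The only step requiring care is the reduction from $X$ to $M$ at the end: one needs the isomorphism of Proposition \ref{prop:iso_pers_mod} to be natural with respect to the inclusion-induced maps $\iota_q^{u,v}$, which is implicit in its formulation but worth noting. Apart from this, the argument is a routine application of the long exact sequence of a pair, and I anticipate no substantial obstacle.
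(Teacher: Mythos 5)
Your proof is correct and follows essentially the same route as the paper: translate the hypothesis via Corollary~\ref{coro:A_Mq}/Proposition~\ref{prop:equiv_u_lub} into $M_q^{w(\alpha_j)-e_j} = M_q^{w(\alpha_j)}$, deduce $H_q(M^{w(\alpha_j)}, M^{w(\alpha_j)-e_j})=0$, and read the conclusion off the long exact sequence of the pair. The only cosmetic difference is where the transfer between $M$ and $X$ happens: the paper invokes a natural isomorphism of the whole long exact sequences of the pairs $(X^w,X^v)$ and $(M^w,M^v)$ and then reads off the result on $X$, whereas you run the long exact sequence entirely on $M$ and transfer only the surjectivity/injectivity of the two structure maps via Proposition~\ref{prop:iso_pers_mod}; your version needs slightly less than the paper's (just the commuting squares of a persistence module isomorphism, not naturality of relative long exact sequences), which you correctly flag as the one point requiring care.
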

\begin{proof}
By Proposition \ref{prop:equiv_u_lub}, if $u \notin \overline{\G (X_q)}$, then there exists $j\in [n]$ such that,  
for all $\alpha_j \subseteq [n] \smallsetminus \{ j\}$,
we have $X_q^{w(\alpha_j)-e_j} = X_q^{w(\alpha_j)}$, which implies $H_{q}(X^{w(\alpha_j)},X^{w(\alpha_j)-e_j})=0$.
The claim follows from the following portion of the long exact sequence of relative homology of $(X^{w(\alpha_j)},X^{w(\alpha_j)-e_j})$,
\[
H_{q}(X^{w(\alpha_j)-e_j}) \xrightarrow{\quad}
H_{q}(X^{w(\alpha_j)}) \xrightarrow{\quad}
0 \xrightarrow{\quad}
H_{q-1}(X^{w(\alpha_j)-e_j}) \xrightarrow{\quad}
H_{q-1}(X^{w(\alpha_j)}) ,
\]
where the first map is $\iota_{q}^{w(\alpha_j)-e_j, w(\alpha_j)}$ and the last map is $\iota_{q-1}^{w(\alpha_j)-e_j, w(\alpha_j)}$.
\end{proof}

\begin{remark}
\label{rmk:12}
Moving towards the proof of our main result, let us note that the hypothesis $u\notin \overline{\G (X_q)} \cup \overline{\G (X_{q+1})}$ implies, applying Corollary \ref{coro:A_Mq} twice, that the following properties hold simultaneously:
\begin{enumerate}
    \item[(i)] there exists $j\in [n]$ such that $X_q^{w(\alpha_j)-e_j} = X_q^{w(\alpha_j)}$, for all subsets $\alpha_j \subseteq [n] \smallsetminus \{ j \}$.
    \item[(ii)] there exists $\ell \in [n]$ such that  $X_{q+1}^{w(\alpha_{\ell})-e_\ell} = X_{q+1}^{w(\alpha_{\ell})}$, for all subsets $\alpha_{\ell} \subseteq [n] \smallsetminus \{ \ell \}$.
\end{enumerate}
\end{remark}

Clearly, the indices $j$ and $\ell$ of properties (i) and (ii) in Remark \ref{rmk:12} can either coincide or not.
We next prove that both cases imply the acyclicity of certain Koszul complexes, addressing the case $j=\ell$ in Lemma \ref{lem:j_equal_l} and the case $j\ne \ell$ in Lemma \ref{lem:j_notequal_l}. 

\begin{lem}
\label{lem:j_equal_l}
If properties (i) and (ii) in Remark \ref{rmk:12} are verified with  $j=\ell$,  then the Koszul complex $\mathbb{K}_* (x_1 , \ldots , x_n; V_q)(u)$ is acyclic. 
\end{lem}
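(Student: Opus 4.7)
The plan is to show that, under the assumption $j=\ell$, the chain map whose mapping cone yields $\mathbb{K}_*(x_1,\ldots,x_n;V_q)(u)$ in the iterated construction of Section~\ref{sec:mappingcones} is in fact an isomorphism in each degree, whence its mapping cone is acyclic.

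First, I would apply Corollary~\ref{coro:u_not_lub} twice with the common index $j=\ell$. Property (i) at index $j$, combined with Corollary~\ref{coro:A_Mq} and Corollary~\ref{coro:u_not_lub}, tells us that for every $\alpha_j \subseteq [n]\smallsetminus\{j\}$ the map
\[
\iota_q^{w(\alpha_j)-e_j,\,w(\alpha_j)} : H_q(X^{w(\alpha_j)-e_j}) \to H_q(X^{w(\alpha_j)})
\]
is surjective; property (ii) at the same index $j$ provides, through the analogous reasoning applied in degree $q+1$, that the same map $\iota_q^{w(\alpha_j)-e_j,\,w(\alpha_j)}$ is injective. Hence, under the hypothesis $j=\ell$, the inclusion $X^{w(\alpha_j)-e_j}\hookrightarrow X^{w(\alpha_j)}$ induces an \emph{isomorphism} in $q$th homology for every $\alpha_j \subseteq [n]\smallsetminus\{j\}$.

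Next, I invoke the iterated mapping cone construction of Section~\ref{sec:mappingcones} with $J=[n]\smallsetminus\{j\}$ and $k=j$, so that $\mathbb{K}_*(x_1,\ldots,x_n;V_q)(u)$ is the mapping cone of the chain map
\[
f^{j}(x_1,\ldots,\hat{x}_j,\ldots,x_n;V_q)(u-e_j) : \mathbb{K}_*(x_1,\ldots,\hat{x}_j,\ldots,x_n;V_q)(u-e_j) \to \mathbb{K}_*(x_1,\ldots,\hat{x}_j,\ldots,x_n;V_q)(u).
\]
By the explicit description in Section~\ref{sec:mappingcones}, in each degree $i$ this chain map is
\[
\bigoplus_{\gamma \subseteq [n]\smallsetminus\{j\},\,|\gamma|=i} \iota_q^{u-e_\gamma-e_j,\,u-e_\gamma},
\]
and each summand is of the form $\iota_q^{w(\alpha_j)-e_j,w(\alpha_j)}$ with $\alpha_j = \gamma \subseteq [n]\smallsetminus\{j\}$, hence an isomorphism by the previous step.

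Therefore $f^{j}(x_1,\ldots,\hat{x}_j,\ldots,x_n;V_q)(u-e_j)$ is an isomorphism of $\F$-vector spaces in every degree; being a chain map with this property it is a chain isomorphism, and in particular a quasi-isomorphism. Proposition~\ref{prop:coroWeibel} then immediately gives the acyclicity of its mapping cone, which is precisely $\mathbb{K}_*(x_1,\ldots,x_n;V_q)(u)$. There is no real obstacle here; the only thing to keep track of carefully is that the single index $j=\ell$ produces both surjectivity from the degree-$q$ hypothesis and injectivity from the degree-$(q+1)$ hypothesis of Corollary~\ref{coro:u_not_lub}, which is exactly what allows the whole chain map to collapse to an isomorphism.
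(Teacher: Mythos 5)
Your proof is correct and follows essentially the same route as the paper: identify the isomorphisms $\iota_q^{w(\alpha_j)-e_j,w(\alpha_j)}$ (surjectivity from degree~$q$, injectivity from degree~$q+1$ via Corollary~\ref{coro:u_not_lub}), observe that $f^j(x_1,\ldots,\hat{x}_j,\ldots,x_n;V_q)(u-e_j)$ is therefore a chain isomorphism, and conclude by Proposition~\ref{prop:coroWeibel}. You spell out the degree-$q$ versus degree-$(q+1)$ bookkeeping a bit more explicitly than the paper, but the argument is the same.
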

\begin{proof}
Reasoning as in the proof of Corollary \ref{coro:u_not_lub}, we see that
the maps
\[
\iota_{q}^{w(\alpha_j)-e_j , w(\alpha_j)} : H_{q}(X^{w(\alpha_j)-e_j}) \to H_{q}(X^{w(\alpha_j)})
\]
are isomorphisms, for all subsets $\alpha_j \subseteq [n] \smallsetminus \{ j\}$.
Therefore, the induced chain map
\[
f^j(x_1 , \ldots , \hat{x}_j, \ldots , x_n; V_q)(u-e_j):\mathbb{K}_* (x_1 , \ldots , \hat{x}_j, \ldots , x_n; V_q)(u-e_j) \to \mathbb{K}_* (x_1 , \ldots , \hat{x}_j, \ldots , x_n; V_q)(u)
\] 
is an isomorphism of chain complexes. Hence, the claim follows from Proposition \ref{prop:coroWeibel}
because $\mathbb{K}_* (x_1 , \ldots , x_n; V_q)(u)$ is the mapping cone of $f^j(x_1 , \ldots , \hat{x}_j, \ldots , x_n; V_q)(u-e_j)$.
\end{proof}

\begin{lem}
\label{lem:j_notequal_l}
Let $u\in \N^n$ and suppose that properties (i) and (ii) of Remark \ref{rmk:12} hold with $j\ne \ell$. Then, for any $w\=w(\alpha)= u-e_{\alpha}$ with $\alpha \subseteq [n]\smallsetminus \{ j,\ell\}$, the Koszul complex $\mathbb{K}_* (x_j, x_\ell; V_q)(w)$ is acyclic.
\end{lem}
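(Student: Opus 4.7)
The plan is to translate the Koszul complex to the Morse chain complex via Proposition \ref{prop:iso_pers_mod} and then exploit the explicit chain-group equalities forced by properties (i) and (ii). Since $V_q$ and $V'_q := \bigoplus_u H_q(M^u)$ are isomorphic persistence modules, their Koszul complexes at grade $w$ are isomorphic, so it suffices to prove acyclicity of $\mathbb{K}_*(x_j, x_\ell; V'_q)(w)$.

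From property (i), specialised to $\alpha_j = \alpha$ and $\alpha_j = \alpha \cup \{\ell\}$, I obtain $C_q(M^{w-e_j-e_\ell}) = C_q(M^{w-e_\ell}) =: K$ and $C_q(M^{w-e_j}) = C_q(M^w) =: L$, with $K \subseteq L$ coming from $M^{w-e_\ell} \subseteq M^w$. Similarly, property (ii) specialised to $\alpha_\ell = \alpha$ and $\alpha_\ell = \alpha \cup \{j\}$ yields $C_{q+1}(M^{w-e_j-e_\ell}) = C_{q+1}(M^{w-e_j}) =: P$ and $C_{q+1}(M^{w-e_\ell}) = C_{q+1}(M^w) =: Q$, with $P \subseteq Q$. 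Setting $Z_K := K \cap \ker \partial^M_q$, $Z_L := L \cap \ker \partial^M_q$, $B_P := \partial^M_{q+1}(P)$ and $B_Q := \partial^M_{q+1}(Q)$, I would then highlight the chain of inclusions $B_P \subseteq B_Q \subseteq Z_K \subseteq Z_L$. The crucial link is $B_Q \subseteq K$: since $Q = C_{q+1}(M^{w-e_\ell})$ by property (ii), one has $\partial^M_{q+1}(Q) \subseteq C_q(M^{w-e_\ell}) = K$ by property (i).

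With these identifications, $\mathbb{K}_*(x_j, x_\ell; V'_q)(w)$ takes the explicit form
\[
0 \to Z_K/B_P \xrightarrow{d_2} (Z_L/B_P) \oplus (Z_K/B_Q) \xrightarrow{d_1} Z_L/B_Q \to 0,
\]
where, writing $\bar c$ for the equivalence class of $c$ in the appropriate quotient, $d_2(\bar c) = (-\bar c, \bar c)$ and $d_1(\bar y, \bar x) = \bar y + \bar x$. Acyclicity then follows from three checks: injectivity of $d_2$ reduces to observing that the first component vanishing in $Z_L/B_P$ already forces $c \in B_P$; surjectivity of $d_1$ holds since $d_1(\bar z, 0) = \bar z$ for any $z \in Z_L$. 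The substantive step is $\ker d_1 \subseteq \im d_2$: given representatives $y \in Z_L$ and $x \in Z_K$ with $y + x \in B_Q$, the inclusion $B_Q \subseteq K$ combined with $x \in K$ forces $y = (y+x) - x \in K$, so $y \in Z_K$; the element $c := -y \in Z_K$ then satisfies $d_2(\bar c) = (\bar y, \bar x)$ because $-y - x = -(y+x) \in B_Q$.

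The main obstacle is pinpointing and using the inclusion $B_Q \subseteq K$, which captures the interaction between the two filtration directions and is precisely where properties (i) and (ii) must act in concert: property (i) identifies the degree-$q$ chain groups that make $K$ a common object, while property (ii) places $Q$ inside $M^{w-e_\ell}$ so that its Morse boundary lies in $K$. Once this inclusion is isolated, the remainder is routine algebraic manipulation; without it, classes in $Z_L/B_P$ need not admit preimages in $Z_K/B_P$ under $d_2$, and the Koszul complex would generally fail to be acyclic.
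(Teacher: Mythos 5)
Your proof is correct, and it reaches the result by a recognizably different execution than the paper's, even though both rest on the same foundation: reduce to the Morse complex via Proposition~\ref{prop:iso_pers_mod}, then exploit the chain-group equalities in degrees $q$ and $q+1$ forced by properties~(i) and~(ii). The paper views $\mathbb{K}_*(x_j,x_\ell;V'_q)(w)$ as the mapping cone of $f^\ell(x_j;V'_q)(w-e_\ell)$ and invokes Proposition~\ref{prop:coroWeibel}, reducing acyclicity to showing that this chain map is a quasi-isomorphism; after disposing of degree~$0$ via surjectivity, the degree-$1$ claim is settled by an injectivity observation plus a dimension count of the two kernels, both of which equal $\dim B_Q - \dim B_P$ in your notation. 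You instead write the three homology modules explicitly as $Z_K/B_P$, $(Z_L/B_P)\oplus(Z_K/B_Q)$, $Z_L/B_Q$ and verify acyclicity directly. The pivotal step in your argument is the containment $B_Q = \partial^M_{q+1}(C_{q+1}(M^{w-e_\ell})) \subseteq C_q(M^{w-e_\ell}) = K$, which uses exactly that property~(ii) pins $Q$ inside $M^{w-e_\ell}$ while property~(i) identifies $C_q(M^{w-e_\ell})$ with $C_q(M^{w-e_j-e_\ell})$; the paper's dimension count relies on the same fact implicitly (to identify both kernels with $B_Q/B_P$) but does not isolate it. What your route buys is transparency: the inclusion $B_Q\subseteq Z_K$ is visibly the point where the two parameter directions interact, which sharpens the paper's own remark (the counterexample diagram after Lemma~\ref{lem:j_notequal_l}) that purely homological information about the maps would not suffice. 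What the paper's route buys is consistency with its global strategy of building Koszul complexes one parameter at a time via mapping cones, which is what powers the iterative step in the proof of Theorem~\ref{thm:supp_xi_m}.
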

\begin{proof}
In order to apply Proposition \ref{prop:coroWeibel}, we regard $\mathbb{K}_* (x_j, x_\ell; V_q)(w)$ as the mapping cone of the chain map
\[
f^{\ell}(x_j;V_q)(w-e_{\ell}) : 
\mathbb{K}_* (x_j; V_q)(w-e_{\ell}) \to \mathbb{K}_* (x_j; V_q)(w) .
\]
We want to prove that $f^{\ell}(x_j;V_q)(w-e_{\ell})$ induces isomorphisms between the homology modules of
\[
\mathbb{K}_* (x_j; V_q)(w-e_{\ell}) = \left(0\to H_q(X^{w-e_j -e_\ell}) \xrightarrow{d_1 = \iota_q^{w-e_j -e_\ell, w-e_\ell}} H_q(X^{w-e_\ell}) \to 0 \right)
\]
and
\[
\mathbb{K}_* (x_j; V_q)(w) = \left(0\to H_q(X^{w-e_j}) \xrightarrow{d_1 = \iota_q^{w-e_j,w}} H_q(X^{w}) \to 0 \right) .
\]
Since $\iota_q^{w-e_j -e_\ell, w-e_\ell}$ and $\iota_q^{w-e_j,w}$ are surjective (see proof of Corollary \ref{coro:u_not_lub}), homology in degree $0$ is zero for both Koszul complexes. Hence, we only have to show that
\[
f' : \ker \iota_q^{w-e_j -e_\ell, w-e_\ell} \to \ker \iota_q^{w-e_j,w}
\]
is an isomorphism, where $f'$ denotes the restriction of $\iota_q^{w-e_j -e_\ell, w-e_j}$ to $\ker \iota_q^{w-e_j -e_\ell, w-e_\ell}$. The map $f'$ is injective because  $\iota_q^{w-e_j -e_\ell, w-e_j}$ is injective (see proof of Corollary \ref{coro:u_not_lub}). 
We now show that $f'$ is surjective.
We use here the notations $Z_q(X^v)$ and $B_q(X^v)$ respectively for the submodules of cycles and boundaries of $C_q(X^v)$, for all $v\in \N^n$.
By  Remark~\ref{rmk:12}(i), $X_q^{w-e_j-e_\ell} = X_q^{w-e_\ell}$ and $X_q^{w-e_j} = X_q^{w}$, which implies  $Z_q(X^{w-e_j-e_\ell}) = Z_q(X^{w-e_\ell})$ and $Z_q(X^{w-e_j}) = Z_q(X^{w})$. Similarly, by Remark \ref{rmk:12}(ii), $X_{q+1}^{w-e_j-e_\ell} = X_{q+1}^{w-e_j}$ and $X_{q+1}^{w-e_\ell} = X_{q+1}^{w}$, which implies  $B_{q}(X^{w-e_j-e_\ell}) = B_{q}(X^{w-e_j})$ and $B_{q}(X^{w-e_\ell}) = B_{q}(X^{w})$. 
By Lemma \ref{lem:isos}, we have
\[
\ker \iota_q^{w-e_j -e_\ell, w-e_\ell}=\frac{Z_q(X^{w-e_j-e_\ell})\cap B_q(X^{w-e_\ell})}{B_q(X^{w-e_j-e_\ell})},  \qquad  \ker \iota_q^{w-e_j,w}=\frac{Z_q(X^{w-e_j})\cap B_q(X^{w})}{B_q(X^{w-e_j})}.
\]
Since $f'$ is the map induced by the inclusion of the numerators, using Lemma \ref{lem:isos} and the equalities of subspaces $Z_q$ and $B_q$ stated above, we obtain 
\[
\coker f' \cong \frac{Z_q(X^{w-e_j})\cap B_q(X^{w})}{Z_q(X^{w-e_j-e_\ell})\cap B_q(X^{w-e_\ell}) + B_q(X^{w-e_j})} =\frac{B_q(X^w)}{B_q(X^{w})},
\]
proving that $f'$ is surjective, hence an isomorphism.
\end{proof}

We underline that to conclude the proof we use an argument based on the equality of some subsets of cells of $X$. This part of the proof cannot be replaced by using only the properties of the induced maps in homology (as in Corollary \ref{coro:u_not_lub}). As a counterexample, consider the diagram
\[\begin{tikzcd}
	0 & 0 \\
	V & 0
	\arrow[from=1-1, to=1-2]
	\arrow[from=2-1, to=2-2]
	\arrow[from=1-1, to=2-1]
	\arrow[from=1-2, to=2-2]
\end{tikzcd}\]
of vector spaces, with $\dim V\ne 0$. We can regard the rows as two chain complexes with surjective differentials, and the vertical arrows as an injective chain map between them, as in our proof. However, the mapping cone of this chain map is clearly not acyclic.

We can now complete the proof of our main result for this section.

\begin{thm}
\label{thm:supp_xi_m}
Let $\{ X^u \}_{u\in \N^n}$ be an $n$-parameter exhaustive filtration of a cell complex $X$. Then
\[ \bigcup_{i=0}^n \supp \xi^q_i   \subseteq \overline{\G (X_{q+1})} \cup \overline{\G (X_{q})},
\]
for all $q\in \N$. Furthermore, $\supp \xi^q_0   \subseteq \overline{\G (X_q)}$ and $\supp \xi^q_n   \subseteq \overline{\G (X_{q+1})}$, for all $q\in \N$.
\end{thm}

\begin{proof}
To prove that $\bigcup_{i=0}^n \supp \xi^q_i   \subseteq \overline{\G (X_{q+1})} \cup \overline{\G (X_{q})}$, let $u\notin \overline{\G (X_{q+1})} \cup \overline{\G (X_{q})}$. 
As we have seen, properties (i) and (ii) of Remark \ref{rmk:12} hold, which involve indices $j,\ell\in [n]$. If $j=\ell$, the Koszul complex $\mathbb{K}_* (x_1,\ldots ,x_n; V_q)(u)$ is acyclic by Lemma \ref{lem:j_equal_l}.  
If $j\ne \ell$, consider the Koszul  complexes $\mathbb{K}_* (x_j, x_\ell; V_q)(w)$, for any $w\=w(\alpha)= u-\sum_{i\in \alpha}e_i$ with $\alpha \subseteq [n]\smallsetminus \{ j,\ell\}$, which are acyclic by Lemma \ref{lem:j_notequal_l}.
The Koszul complex  $\mathbb{K}_* (x_1 , \ldots , x_n; V_q)(u)$ can be obtained from the chain complexes $\mathbb{K}_* (x_j, x_\ell; V_q)(w)$ by iterating the mapping cone construction (see Section \ref{sec:Koszul}). At each iteration of this process, by Corollary \ref{coro:mapping_acyclic}, one obtains acyclic Koszul complexes, hence we can conclude that $\mathbb{K}_* (x_1,\ldots ,x_n; V_q)(u)$ is acyclic, that is, $\xi^q_i (u)=0$ for all $i\in \{0,\ldots , n\}$.

To prove that $\supp \xi^q_0 \subseteq \overline{\G (X_q)}$, we observe that if $u\notin \overline{\G (X_q)}$ then by Corollary \ref{coro:u_not_lub} there exists $j\in [n]$ such that $H_q(X^{u-e_j})\to H_q(X^{u})$ is surjective. This implies that the differential $d_1$ of the Koszul complex $\mathbb{K}_* (x_1,\ldots ,x_n; V_q)(u)$ is surjective, hence $\xi_0^{q}(u) = \dim (H_q(X^{u})/\im d_1)=0$. 

Similarly, to prove that $\supp \xi^q_n \subseteq \overline{\G (X_{q+1})}$, we observe that if $u\notin \overline{\G (X_{q+1})}$ then by Corollary~\ref{coro:u_not_lub} there exists $j\in [n]$ such that $H_q(X^{w-e_j})\to H_q(X^{w})$ is injective, where $w:= u- \sum_{i\in [n]\setminus \{j\} }e_i$. This implies that the differential $d_n$ of the Koszul complex $\mathbb{K}_* (x_1,\ldots ,x_n; V_q)(u)$ is injective, hence $\xi_n^{q}(u)= \dim \ker d_n =0$.
\end{proof}

The following simple consequence of Theorem \ref{thm:supp_xi_m} gives a bound of the union of the support of the Betti tables over all the homology degrees inside the union of the closures of the sets of entrance grades of critical cells over all the dimensions. 
\begin{coro}
Under the assumptions of Theorem \ref{thm:supp_xi_m},
$\bigcup_{q,i=0}^n \supp \xi^q_i   \subseteq \bigcup_{q=0}^n\overline{\G (X_q)}$.    
\end{coro}
The following Example \ref{ex:tight} shows that in general the right-hand side term of this inclusion cannot be reduced to a smaller set defined in terms of the entrance grades of cells of $X$, making this inclusion tighter in some sense. A more refined version of it when $n=2$ will be given in the next section (cf.\ Corollary \ref{coro:supp_2par}) in terms of homological critical grades. 

\begin{ex}\label{ex:tight} 
Let $n=3$ and let $X$ be the following simplicial complex:
\begin{figure}[!h]
\centering
\begin{tikzpicture}
      \tikzset{enclosed/.style={draw, circle, inner sep=0pt, minimum size=.1cm, fill=black}}
      \node[enclosed, label={below, yshift=0cm: $p_1$}] (v1) at (0,0.35) {};
      \node[enclosed, label={below, xshift=0cm: $p_2$}] (v2) at (1.3,0) {};
      \node[enclosed, label={below, yshift=0cm: $p_3$}] (v3) at (2.6,0.35) {};
      \node[enclosed, label={above, xshift=0cm: $p_0$}] (v0) at (1.3,1.1) {};
      \draw (v0) -- (v1);
      \draw (v0) -- (v2);
      \draw (v0) -- (v3);
      \draw (v1) -- (v2);
      \draw (v2) -- (v3);
\end{tikzpicture}
\end{figure}\\
Let us consider the following $3$-filtration of $X$: all vertices and the edges $p_1 p_2$ and $p_2 p_3$ have entry grade $\mathbf{0}=(0,0,0)\in \N^3$; for all $j\in \{1,2,3\}$, let the edge $p_0 p_j$ have entry grade $u_j \coloneqq \lambda_j e_j$, for some positive integer $\lambda_j$. Figure \ref{fig:counterexZ} in Section \ref{subsect:xi1npar} represents a filtration of this form. Then, all entry grades and all their least upper bounds in $\N^3$ are in $\supp \xi^q_i$ for some $q$ and $i$:  $\xi^0_0(\mathbf{0})=2$,  $\xi^0_1(u_j)=1$ for all $j$, $\xi^1_0(u_j\vee u_k)=1$ for all $j\ne k$, and $\xi^1_1 (u_1 \vee u_2 \vee u_3)=1$. This example can be generalized to any $n\ge 1$. 
\end{ex}

\subsection{A condition for the vanishing of $\xi_1^q$}
\label{subsect:xi1npar}

As in the rest of the section, our starting point is an $n$-parameter exhaustive filtration $\{ X^u \}_{u\in \N^n}$ of a cell complex $X$, of which we consider the $q$th persistent homology module regarded as the $n$-graded $S$-module $V_q = \bigoplus_{u\in \N^n}H_q (X^u)$. For each $u\in\N^n$, we write $H_q (X^u)=\frac{Z_q(X^u)}{B_q(X^u)}$, where $Z_q(X^u)= \ker (\partial_q \colon C_q(X^u)\to C_{q-1}(X^u))$ and $B_q(X^u)= \im (\partial_{q+1} \colon C_{q+1}(X^u)\to C_{q}(X^u))$. We observe that $Z_q \coloneqq \bigoplus_{u\in \N^u}Z_q(X^u)$ and $B_q \coloneqq \bigoplus_{u\in \N^u}B_q(X^u)$ are $n$-graded $S$-modules, respectively given by the kernel of the $n$-graded homomorphism $\partial_q \colon \bigoplus_{u\in\N^n}C_q(X^u)\to \bigoplus_{u\in\N^n}C_{q-1}(X^u)$ and the image of the $n$-graded homomorphism $\partial_{q+1} \colon \bigoplus_{u\in\N^n}C_{q+1}(X^u)\to \bigoplus_{u\in\N^n}C_{q}(X^u)$. In this subsection, we give a condition for the vanishing of the Betti table $\xi_1^q$ of $V_q$ at a fixed $u\in \N^n$ in terms of $B_q(X^v)$ and $Z_q(X^v)$ at grades $v\in \{u-e_\alpha\}_{\alpha \subseteq [n]}$ (Theorem \ref{thm:H1K}), and we derive relations between the support of $\xi_1^q$ and the entrance grades of cells.

Our aim is studying, for any fixed $u\in \N^n$, the degree--1 homology  of the Koszul complex $\K_* (x_1, \ldots ,x_n; V_q)(u)$, whose dimension is the value $\xi_1^q(u)$.

We fix $u\in \N^n$ and $q\in \N$. We choose $\ell \in [n]$ and define an $n$-filtered cell complex $\{\widetilde{X}^v\}_{v\in \N^n}$ such that its $(q+1)$-cells are
\begin{equation*}
\widetilde{X}^v_{q+1} \coloneqq 
\begin{cases}
\bigcup_{j\in [n]\smallsetminus \{\ell\}} X^{v-e_j}_{q+1} & \text{if } v\in \{u, u-e_\ell\},\\
X^{v}_{q+1}  & \text{otherwise},
\end{cases}
\end{equation*}
its lower dimensional cells are $\widetilde{X}^v_{r}\coloneqq X^v_{r}$ for all $r\le q$ and all $v\in \N^n$, and it does not have any cell of dimension higher than $q+1$. The incidence function of $\widetilde{X}$ is induced (by restriction) by the incidence function of $X$. We remark that, since our goal is studying the Koszul complex at $u$, we will only look at the grades $v\in \{u-e_\alpha\}_{\alpha \subseteq [n]}$ of the filtration $\{\widetilde{X}^v\}_{v\in \N^n}$. The $q$th homology of $\{\widetilde{X}^v\}_{v\in \N^n}$ is the $n$-graded $S$-module $\widetilde{V}_{q}$ such that
\begin{equation}\label{eq:tildeV}
\widetilde{V}^v_q \coloneqq 
\begin{cases}
\frac{Z_q(X^v)}{\sum_{j\ne \ell}B_q(X^{v-e_j})} & \text{if } v\in \{u, u-e_\ell\},\\
H_q(X^v) = \frac{Z_q(X^v)}{B_q(X^v)}  & \text{otherwise}.
\end{cases}
\end{equation}
For the sake of a simpler notation, we do not denote the dependence of $\widetilde{V}_{q}$ on the fixed $u\in \N^n$ and the chosen $\ell \in [n]$.

The module $\widetilde{V}_{q}$, which coincides with $V_q$ for all grades except $u$ and $u-e_\ell$, is useful to prove the results of this subsection. 
We observe that the natural $n$-graded homomorphism $\pi\colon\widetilde{V}_q \to V_q$ is surjective, because pointwise it is the linear map
\begin{equation}\label{eq:pi-v}
\pi^v:\frac{Z_q(X^v)}{\sum_{j\ne \ell}B_q(X^{v-e_j})} \twoheadrightarrow \frac{Z_q(X^v)}{B_q(X^v)} 
\end{equation}
for the grades $v\in \{u,u-e_\ell\}$, and it is the identity on $H_q(X^v)$ for all other grades.
We have therefore the following exact sequence of $n$-graded $S$-modules:
\[
\begin{tikzcd}
	0 & {\ker \pi} & {\widetilde{V}_q} & {V_q} & 0.
	\arrow[from=1-1, to=1-2]
	\arrow[from=1-2, to=1-3]
	\arrow["\pi", from=1-3, to=1-4]
	\arrow[from=1-4, to=1-5]
\end{tikzcd}
\]
Since constructing the Koszul complex at $u$ is an exact operation (see Section \ref{subsec:Kosz1}), we obtain the short exact sequence of chain complexes
\[
\begin{tikzcd}
	0 & {\K_*(x_1,\ldots,x_n;\ker \pi)(u)} & {\K_*(x_1,\ldots,x_n;\widetilde{V}_q)(u)} & {\K_*(x_1,\ldots,x_n;V_q)(u)} & 0
	\arrow[from=1-1, to=1-2]
	\arrow[from=1-2, to=1-3]
	\arrow[from=1-3, to=1-4]
	\arrow[from=1-4, to=1-5]
\end{tikzcd}
\]
and the induced long exact sequence in homology
\begin{equation}\label{eq:long-ker}
\begin{tikzcd}
\cdots
  \arrow{r}
&
H_1(\K_*(\widetilde{V}_q)(u))
  \ar{r}
&
H_{1}(\K_*(V_q)(u))
  \arrow[dll, phantom, ""{coordinate, name=Z}]
  \arrow[dll, rounded corners,
    to path={ -- ([xshift=2ex]\tikztostart.east)
    |- (Z)
    -| ([xshift=-2ex]\tikztotarget.west)
    -- (\tikztotarget)}]
\\
H_0(\K_*(\ker \pi)(u))
  \ar{r}
&
H_0(\K_*(\widetilde{V}_q)(u))
  \ar{r}
&
H_{0}(\K_*(V_q)(u))
  \ar{r}
&
0,
\end{tikzcd}
\end{equation}
where we have suppressed the sequence $(x_1,\ldots ,x_n)$ from the notation of the Koszul complexes for brevity.
The following proposition will be useful to prove the main result of this subsection.

\begin{prop}\label{prop:H0ker}
If $B_q(X^u) = \sum_{j=1}^n B_q(X^{u-e_j})$, then
$H_0(\K_*(\ker \pi)(u))=0$.
\end{prop}
\begin{proof}
We recall that the construction of the Koszul complex $\K_*(\ker \pi)(u)$ involves the graded pieces with grades in $\{u-e_{\alpha}\}_{\alpha \subseteq [n]}$ of the $n$-graded $S$-module $\ker \pi$.
By definition of $\widetilde{V}_q$, we have $(\ker \pi)^v=0$ for all $v$, except for $u$ and $u-e_\ell$. 
We consider the linear map
\[
(\ker \pi)^{u-e_\ell}=\frac{B_q(X^{u-e_\ell})}{\sum_{j\in [n] \smallsetminus \{\ell\}} B_q(X^{u-e_j-e_\ell})} \xrightarrow{\phantom{aa} \eta_q^{u-e_\ell, u}\phantom{aa}} (\ker \pi)^u=\frac{B_q(X^{u})}{\sum_{j\in [n] \smallsetminus \{\ell\}} B_q(X^{u-e_j})}
\]
induced by the inclusion $B_q(X^{u-e_\ell})\subseteq B_q(X^{u})$. Regarded as a chain complex (concentrated in homological degrees 1 and 0), the map $\eta_q^{u-e_\ell, u}$ is isomorphic to the Koszul complex $\K_*(\ker \pi)(u)$, hence $\coker \eta_q^{u-e_\ell, u}\cong H_0(\K_*(\ker \pi)(u))$. We conclude the proof  using Lemma \ref{lem:isos} to compute $\coker \eta_q^{u-e_\ell, u}$:
\[
\coker \eta_q^{u-e_\ell, u} \cong \frac{B_q(X^{u})}{B_q (X^{u-e_\ell})+\sum_{j\in [n] \smallsetminus \{\ell\}} B_q(X^{u-e_j})} = \frac{B_q(X^{u})}{\sum_{j=1}^n B_q(X^{u-e_j})}=0,
\]
where the last equality holds by assumption. 
\end{proof}

Our main result of this subsection gives a condition for $H_1(\K_*(V_q)(u))$, and, equivalently, $\xi^q_1(u)$, to vanish.

\begin{thm}\label{thm:H1K}
Let $\{X^v\}_{v\in \N^n}$ be an $n$-parameter exhaustive filtration of a cell complex $X$. Fix $u\in \N^n$, and suppose that $B_q(X^u) = \sum_{j=1}^n B_q(X^{u-e_j})$ and that there exists a permutation $\rho \in \mathrm{Sym}(n)$ such that, for every $\ell \in [n]$,
\[
Z_q(X^{u-e_{\rho(\ell)}})\cap \left( \sum_{j< \ell}Z_q(X^{u-e_{\rho(j)}})\right) = \sum_{j< \ell}Z_q(X^{u-e_{\rho(j)}-e_{\rho(\ell)}}) .
\]
Then $\xi_1^q(u)=0$.
\end{thm}
\begin{proof}
First, we prove the claim supposing that the hypothesis on the cycles is satisfied by the identity permutation $\rho = \id_{[n]}$.
The proof is by induction on the number $n$ of parameters. 

The base case $n=1$ corresponds to the statement $B_q(X^{u})=B_q(X^{u-1})$ implies $\xi_1^q(u)=0$ (as the condition on $Z_q$ is trivially satisfied), which is true because $\xi_1^q(u)$ is the dimension of the vector space $\ker \iota_q^{u-1,u}:H_q(X^{u-1})\to H_q(X^{u})$ which, by Lemma \ref{lem:isos}, is isomorphic to $(Z_q(X^{u-1}) \cap B_q(X^u))/B_q(X^{u-1})$.  

We now prove the claim for $n$ parameters, supposing it holds for $n-1$ parameters.
For any chosen $\ell \in [n]$, we can consider the module $\widetilde{V}_q$ associated with $\{\widetilde{X}^v\}$, defined as in Equation \eqref{eq:tildeV}. Here, we choose $\ell = n$. Under the hypothesis that $B_q(X^u) = \sum_{j=1}^n B_q(X^{u-e_j})$, in the long exact sequence (\ref{eq:long-ker}) we have $H_0(\K_*(\ker \pi)(u))=0$ by Proposition \ref{prop:H0ker}, so it is sufficient to prove that $H_1(\K_*(\widetilde{V}_q)(u))=0$ to conclude that $H_1(\K_*(V_q)(u))=0$. We highlight that using $\widetilde{V}_q$ in the rest of the proof is convenient, as it is constructed in a way that allows us to use the inductive assumption. 
We write the Koszul complex $\K_*(\widetilde{V}_q)(u) = \K_*(x_1,\ldots ,x_n;\widetilde{V}_q)(u)$ as the mapping cone (see Section~\ref{sec:mappingcones}) of the chain map induced by inclusions in direction $e_n$,
\[
f^n(x_{1},\ldots ,x_{n-1}; \widetilde{V}_q)(u-e_n) : \mathbb{K}_* (x_{1},\ldots ,x_{n-1}; \widetilde{V}_q)(u-e_n) \to \mathbb{K}_* (x_{1},\ldots ,x_{n-1}; \widetilde{V}_q)(u) .
\]
In the rest of this proof, for simplicity we denote this chain map and the two Koszul complexes by $f^n : \mathbb{K}^n_* (\widetilde{V}_q)(u-e_n) \to \mathbb{K}^n_* (\widetilde{V}_q)(u)$. 
We consider the long exact sequence of the mapping cone (see e.g.\ \cite[\S~1.5.2]{weibel1994introduction}) for $\cone (f^n )_* =\K_* (\widetilde{V}_q)(u)$:
\[
\begin{tikzcd}
\cdots
  \arrow{r}
&
H_1(\mathbb{K}_*^{n}(\widetilde{V}_q)(u))
  \ar{r}
&
H_{1}(\K_* (\widetilde{V}_q)(u))
  \arrow[dll, phantom, ""{coordinate, name=Z}]
  \arrow[dll, rounded corners,
    to path={ -- ([xshift=2ex]\tikztostart.east)
    |- (Z)
    -| ([xshift=-2ex]\tikztotarget.west)
    -- (\tikztotarget)}]
\\
H_0(\mathbb{K}_*^n (\widetilde{V}_q)(u-e_n))
  \ar{r}
&
H_0(\mathbb{K}_*^n (\widetilde{V}_q)(u))
  \ar{r}
&
H_{0}(\K_* (\widetilde{V}_q)(u))
  \ar{r}
&
0.
\end{tikzcd}
\]
The Koszul complex $\mathbb{K}_*^n (\widetilde{V}_q)(u)=\mathbb{K}_* (x_{1},\ldots ,x_{n-1}; \widetilde{V}_q)(u)$ is defined from the $(n-1)$-parameter filtration $\{ \widetilde{X}^{u-e_{\alpha}}\}_{\alpha \subseteq [n-1]}$, which allows us to apply the inductive assumption, since $B_q(\widetilde{X}^u)= \sum_{j=1}^{n-1} B_q(\widetilde{X}^{u-e_j})$ and the condition involving the subspaces $Z_q$ is satisfied by $\rho=\id_{[n]}$ restricted to $[n-1]$. Therefore, by the inductive assumption, $H_1(\mathbb{K}_*^n (\widetilde{V}_q)(u))=0$. 
Thus, by Lemma \ref{lem:exact0}, the vanishing of $H_1(\K_*(\widetilde{V}_q)(u))$ is ensured by the injectivity of the function $H_0(f^n)\colon H_0(\mathbb{K}_*^n (\widetilde{V}_q)(u-e_n)) \to H_0(\mathbb{K}_*^n (\widetilde{V}_q)(u))$ in the long exact sequence, which is what we show to hold in the next step of the proof.

We begin by observing that $H_0(\mathbb{K}_*^n (\widetilde{V}_q)(u))$ can be written as follows:
\begin{align*}
H_0(\mathbb{K}_*^n (\widetilde{V}_q)(u)) &\cong \coker \left( \bigoplus_{j=1}^{n-1} H_q(\widetilde{X}^{u-e_j}) \xrightarrow{[ \iota_q^{u-e_{1}, u} \; \cdots \; \iota_q^{u-e_{n-1}, u}]}  H_q(\widetilde{X}^{u})  \right) \\
&= \coker \left( \bigoplus_{j=1}^{n-1} \frac{Z_q(\widetilde{X}^{u-e_j})}{B_q(\widetilde{X}^{u-e_j})} \xrightarrow{[ \iota_q^{u-e_{1}, u} \; \cdots \; \iota_q^{u-e_{n-1}, u}]}  \frac{Z_q(\widetilde{X}^{u})}{\sum_{j=1}^{n-1} B_q(\widetilde{X}^{u-e_j})} \right) \\
&\cong \frac{Z_q(\widetilde{X}^{u})}{\sum_{j=1}^{n-1} Z_q(\widetilde{X}^{u-e_j})} \\
&= \frac{Z_q(X^{u})}{\sum_{j=1}^{n-1} Z_q(X^{u-e_j})}.
\end{align*}
Similarly, there is a canonical isomorphism
\begin{align*}
H_0(\mathbb{K}_*^n (\widetilde{V}_q)(u-e_n)) &\cong \frac{Z_q(X^{u-e_n})}{\sum_{j=1}^{n-1} Z_q(X^{u-e_j-e_n})}.
\end{align*}
Using Lemma \ref{lem:isos}, we see that the kernel of the map $H_0(f^n)\colon H_0(\mathbb{K}_*^n (\widetilde{V}_q)(u-e_n)) \to H_0(\mathbb{K}_*^n (\widetilde{V}_q)(u))$, induced by the inclusion $Z_q(X^{u-e_n})\subseteq Z_q(X^{u})$, is
\[
\ker (H_0(f^n)) \cong \frac{Z_q(X^{u-e_n})\cap (\sum_{j=1}^{n-1}Z_q(X^{u-e_j}))}{\sum_{j=1}^{n-1} Z_q(X^{u-e_j-e_n})} ,
\]
which is zero since the numerator coincides with the denominator by hypothesis. This concludes the proof under the assumption $\rho=\id_{[n]}$.

Lastly, we explain how the proof in the special case of the identity permutation implies the claim for a generic permutation $\rho$ on the set $[n]$. Let $\rho$ be a permutation for which the hypothesis of the theorem is satisfied. Then, we can consider the filtration $\{L^{u-e_{\alpha}}\}_{\alpha \subseteq [n]}$ defined by $L^{u-e_{\alpha}}\coloneqq X^{u-e_{\rho (\alpha)}}$, which satisfies $B_q(L^u) = \sum_{j=1}^n B_q(L^{u-e_j})$ and, for every $\ell \in [n]$, $Z_q(L^{u-e_{\ell}})\cap \left( \sum_{j< \ell}Z_q(L^{u-e_{j}})\right) = \sum_{j< \ell}Z_q(L^{u-e_{j}-e_{\ell}})$. The Koszul complex of the associated $q$th persistent homology module at $u$ is obtained from $\K_*(V_q)(u)=\K_*(x_1,\ldots ,x_n;V_q)(u)$ by permuting the indeterminates, and is therefore isomorphic to it (see Section \ref{sec:mappingcones}). We can therefore apply the proof for the case of the identity permutation to $\{L^{u-e_{\alpha}}\}_{\alpha \subseteq [n]}$ and conclude that $\xi_1^q(u)=0$.
\end{proof}

\begin{rmk}\label{rmk:Zq2param}
The condition on the subspaces $Z_q$ in Theorem \ref{thm:H1K} amounts to $n$ different identities of subspaces of $Z_q(X^u)$. In the proof of the theorem we observed that, when the sum on the left-hand side has zero summands, the corresponding inequality is trivially satisfied. It is worth noticing that the equality corresponding to a sum on the left-hand side with exactly one summand is always satisfied too. In other words, for any pair of distinct indices $j,k\in [n]$, the identity $Z_q(X^{u-e_j})\cap Z_q(X^{u-e_k}) = Z_q(X^{u-e_j-e_k})$ holds true. To see this, we recall that $C_q(X^{u-e_j})\cap C_q(X^{u-e_k}) = C_q(X^{u-e_j-e_k})$ holds by one-criticality as a consequence of $X^{u-e_j}\cap X^{u-e_k} = X^{u-e_j-e_k}$ (see Remark \ref{rmk:one-criticality}), and we observe that $Z_q(X^v)=C_q(X^v) \cap \partial_q^{-1} (0)$ for all $v\in \N^n$, where $\partial_q$ denotes the differential $\partial_q \colon C_q(\cup_v X^v)\to C_{q-1}(\cup_v X^v)$. In particular, for $2$-parameter filtrations, the condition of Theorem \ref{thm:H1K} on the subspaces $Z_q$ always holds. In Figure \ref{fig:counterexZ}, we show a $3$-parameter filtration not satisfying the hypothesis of Theorem \ref{thm:H1K} on the subspaces $Z_q$.
\end{rmk}

\begin{figure}
\centering
\begin{tikzpicture}
      \tikzset{enclosed/.style={draw, circle, inner sep=0pt, minimum size=.08cm, fill=black}}

      \draw[line width = 1.5pt, lightgray] (0, 0) rectangle (2.4, 1.4);
      \draw[line width = 1.5pt, lightgray] (4.4, 0) rectangle (2.4 + 4.4, 1.4);
      \draw[line width = 1.5pt, lightgray] (2.2, 2.2) rectangle (2.4 + 2.2, 1.4 + 2.2);
      \draw[line width = 1.5pt, lightgray] (2.2 + 4.4, 2.2) rectangle (2.4 + 2.2 + 4.4, 1.4 + 2.2);

      \draw[line width = 1.5pt, lightgray] (0, 4.8) rectangle (2.4, 1.4 + 4.8);
      \draw[line width = 1.5pt, lightgray] (4.4, 4.8) rectangle (2.4 + 4.4, 1.4 + 4.8);
      \draw[line width = 1.5pt, lightgray] (2.2, 2.2 + 4.8) rectangle (2.4 + 2.2, 1.4 + 2.2 + 4.8);
      \draw[line width = 1.5pt, lightgray] (2.2 + 4.4, 2.2 + 4.8) rectangle (2.4 + 2.2 + 4.4, 1.4 + 2.2 +4.8);
      
      \draw[-Triangle, line width = 1.5pt, lightgray] (2.6, 0.7) to (4.2, 0.7);
      \draw[-Triangle, line width = 1.5pt, lightgray] (2.6, 0.7 + 4.8) to (4.2, 0.7 + 4.8);
      \draw[-Triangle, line width = 1.5pt, lightgray] (1.2, 1.6) to (1.2, 4.6);
      \draw[-Triangle, line width = 1.5pt, lightgray] (1.2 + 4.4, 1.6) to (1.2 + 4.4, 4.6);
      \draw[-Triangle, line width = 1.5pt, lightgray] (2.6 + 2.2, 0.7 + 4.8 + 2.2) to (4.2 + 2.2, 0.7 + 4.8 + 2.2);
      \draw[-Triangle, line width = 1.5pt, lightgray] (1.2 + 4.4 + 2.2, 1.6 + 2.2) to (1.2 + 4.4 + 2.2, 4.6 + 2.2);
      \draw[-Triangle, line width = 1.5pt, lightgray] (1.6, 1.6) to (1.6 + 0.5, 1.6 + 0.5);
      \draw[-Triangle, line width = 1.5pt, lightgray] (1.6 + 4.4, 1.6) to (1.6 + 0.5 + 4.4, 1.6 + 0.5);
      \draw[-Triangle, line width = 1.5pt, lightgray] (1.6, 1.6 + 4.8) to (1.6 + 0.5, 1.6 + 0.5 + 4.8);
      \draw[-Triangle, line width = 1.5pt, lightgray] (1.6 + 4.4, 1.6 + 4.8) to (1.6 + 0.5 + 4.4, 1.6 + 0.5 + 4.8);
      \draw[line width = 1.5pt, lightgray] (2.6 + 2.2, 0.7 + 2.2) to (2.6 + 0.6 + 2.2, 0.7 + 2.2);
      \draw[-Triangle, line width = 1.5pt, lightgray] (2.6 + 1 + 2.2, 0.7 + 2.2) to (4.2 + 2.2, 0.7 + 2.2);
      \draw[line width = 1.5pt, lightgray] (1.2 + 2.2, 1.6 + 2.2) to (1.2 + 2.2, 1.6 + 1.5 + 2.2);
      \draw[-Triangle, line width = 1.5pt, lightgray] (1.2 + 2.2, 1.6 + 1.9 + 2.2) to (1.2 + 2.2, 4.6 + 2.2);

      \node[enclosed] at (1.2, 0.3) {};
      \node[enclosed] at (1.2, 1.1) {};
      \node[enclosed] at (0.4, 0.6) {};
      \node[enclosed] at (2, 0.6) {};
      \draw (1.2, 0.3) -- (0.4, 0.6); 
      \draw (1.2, 0.3) -- (2, 0.6); 

      \node[enclosed] at (1.2 + 4.4, 0.3) {};
      \node[enclosed] at (1.2 + 4.4, 1.1) {};
      \node[enclosed] at (0.4 + 4.4, 0.6) {};
      \node[enclosed] at (2 + 4.4, 0.6) {};
      \draw (1.2 + 4.4, 0.3) -- (0.4 + 4.4, 0.6); 
      \draw (1.2 + 4.4, 0.3) -- (2 + 4.4, 0.6); 
      \draw (1.2 + 4.4, 1.1) -- (0.4 + 4.4, 0.6); 

      \node[enclosed] at (1.2 + 2.2, 0.3 + 2.2) {};
      \node[enclosed] at (1.2 + 2.2, 1.1 + 2.2) {};
      \node[enclosed] at (0.4 + 2.2, 0.6 + 2.2) {};
      \node[enclosed] at (2 + 2.2, 0.6 + 2.2) {};
      \draw (1.2 + 2.2, 0.3 + 2.2) -- (0.4 + 2.2, 0.6 + 2.2); 
      \draw (1.2 + 2.2, 0.3 + 2.2) -- (2 + 2.2, 0.6 + 2.2); 
      \draw (1.2 + 2.2, 0.3 + 2.2) -- (1.2 + 2.2, 1.1 + 2.2); 

      \node[enclosed] at (1.2 + 4.4 + 2.2, 0.3 + 2.2) {};
      \node[enclosed] at (1.2 + 4.4 + 2.2, 1.1 + 2.2) {};
      \node[enclosed] at (0.4 + 4.4 + 2.2, 0.6 + 2.2) {};
      \node[enclosed] at (2 + 4.4 + 2.2, 0.6 + 2.2) {};
      \draw (1.2 + 4.4 + 2.2, 0.3 + 2.2) -- (0.4 + 4.4 + 2.2, 0.6 + 2.2); 
      \draw (1.2 + 4.4 + 2.2, 0.3 + 2.2) -- (2 + 4.4 + 2.2, 0.6 + 2.2); 
      \draw (1.2 + 4.4 + 2.2, 0.3 + 2.2) -- (1.2 + 4.4 + 2.2, 1.1 + 2.2); 
      \draw (1.2 + 4.4 + 2.2, 1.1 + 2.2) -- (0.4 + 4.4 + 2.2, 0.6 + 2.2); 
      \node[label={above, xshift=0cm: {\footnotesize $u-e_3$}}] at (1.95 + 4.4 + 2.2, 0.85 + 2.2) {};

      \node[enclosed] at (1.2, 0.3 + 4.8) {};
      \node[enclosed] at (1.2, 1.1 + 4.8) {};
      \node[enclosed] at (0.4, 0.6 + 4.8) {};
      \node[enclosed] at (2, 0.6 + 4.8) {};
      \draw (1.2, 0.3 + 4.8) -- (0.4, 0.6 + 4.8); 
      \draw (1.2, 0.3 + 4.8) -- (2, 0.6 + 4.8); 
      \draw (1.2, 1.1 + 4.8) -- (2, 0.6 + 4.8); 

      \node[enclosed] at (1.2 + 4.4, 0.3 + 4.8) {};
      \node[enclosed] at (1.2 + 4.4, 1.1 + 4.8) {};
      \node[enclosed] at (0.4 + 4.4, 0.6 + 4.8) {};
      \node[enclosed] at (2 + 4.4, 0.6 + 4.8) {};
      \draw (1.2 + 4.4, 0.3 + 4.8) -- (0.4 + 4.4, 0.6 + 4.8); 
      \draw (1.2 + 4.4, 0.3 + 4.8) -- (2 + 4.4, 0.6 + 4.8); 
      \draw (1.2 + 4.4, 1.1 + 4.8) -- (0.4 + 4.4, 0.6 + 4.8); 
      \draw (1.2 + 4.4, 1.1 + 4.8) -- (2 + 4.4, 0.6 + 4.8); 
      \node[label={above, xshift=0cm: {\footnotesize $u-e_2$}}] at (1.95 + 4.4, 0.85 + 4.8) {};

      \node[enclosed] at (1.2 + 2.2, 0.3 + 2.2 + 4.8) {};
      \node[enclosed] at (1.2 + 2.2, 1.1 + 2.2 + 4.8) {};
      \node[enclosed] at (0.4 + 2.2, 0.6 + 2.2 + 4.8) {};
      \node[enclosed] at (2 + 2.2, 0.6 + 2.2 + 4.8) {};
      \draw (1.2 + 2.2, 0.3 + 2.2 + 4.8) -- (0.4 + 2.2, 0.6 + 2.2 + 4.8); 
      \draw (1.2 + 2.2, 0.3 + 2.2 + 4.8) -- (2 + 2.2, 0.6 + 2.2 + 4.8); 
      \draw (1.2 + 2.2, 0.3 + 2.2 + 4.8) -- (1.2 + 2.2, 1.1 + 2.2 + 4.8); 
      \draw (1.2 + 2.2, 1.1 + 2.2 + 4.8) -- (2 + 2.2, 0.6 + 2.2 + 4.8); 
      \node[label={above, xshift=0cm: {\footnotesize $u-e_1$}}] at (1.95 + 2.2, 0.85 + 2.2 + 4.8) {};

      \node[enclosed] at (1.2 + 4.4 + 2.2, 0.3 + 2.2 + 4.8) {};
      \node[enclosed] at (1.2 + 4.4 + 2.2, 1.1 + 2.2 + 4.8) {};
      \node[enclosed] at (0.4 + 4.4 + 2.2, 0.6 + 2.2 + 4.8) {};
      \node[enclosed] at (2 + 4.4 + 2.2, 0.6 + 2.2 + 4.8) {};
      \draw (1.2 + 4.4 + 2.2, 0.3 + 2.2 + 4.8) -- (0.4 + 4.4 + 2.2, 0.6 + 2.2 + 4.8); 
      \draw (1.2 + 4.4 + 2.2, 0.3 + 2.2 + 4.8) -- (2 + 4.4 + 2.2, 0.6 + 2.2 + 4.8); 
      \draw (1.2 + 4.4 + 2.2, 0.3 + 2.2 + 4.8) -- (1.2 + 4.4 + 2.2, 1.1 + 2.2 + 4.8); 
      \draw (1.2 + 4.4 + 2.2, 1.1 + 2.2 + 4.8) -- (0.4 + 4.4 + 2.2, 0.6 + 2.2 + 4.8); 
      \draw (1.2 + 4.4 + 2.2, 1.1 + 2.2 + 4.8) -- (2 + 4.4 + 2.2, 0.6 + 2.2 + 4.8); 
      \node[label={above, xshift=0cm: {\footnotesize $u$}}] at (1.95 + 4.4 + 2.2 + 0.2, 0.85 + 2.2 + 4.8) {};
\end{tikzpicture}
\caption{A $3$-parameter filtration $\{X^{u-e_{\alpha}}\}_{\alpha \subseteq \{1,2,3\}}$ of simplicial complexes such that, for $q=1$, the equality $Z_q(X^{u-e_\ell})\cap (Z_q(X^{u-e_j})+Z_q(X^{u-e_k}))=Z_q(X^{u-e_j-e_\ell})+Z_q(X^{u-e_k-e_\ell})$ (see hypothesis of Theorem \ref{thm:H1K}) does not hold, for any choice of different $j,k,\ell$ in $\{1,2,3\}$. 
Using the Koszul complex $\K_* (V_1)(u)$ it is easy to see, by a dimension argument, that $H_1(\K_* (V_1)(u))\cong \F$ and, equivalently, $\xi_1^q(u) =1$.}
\label{fig:counterexZ}
\end{figure}
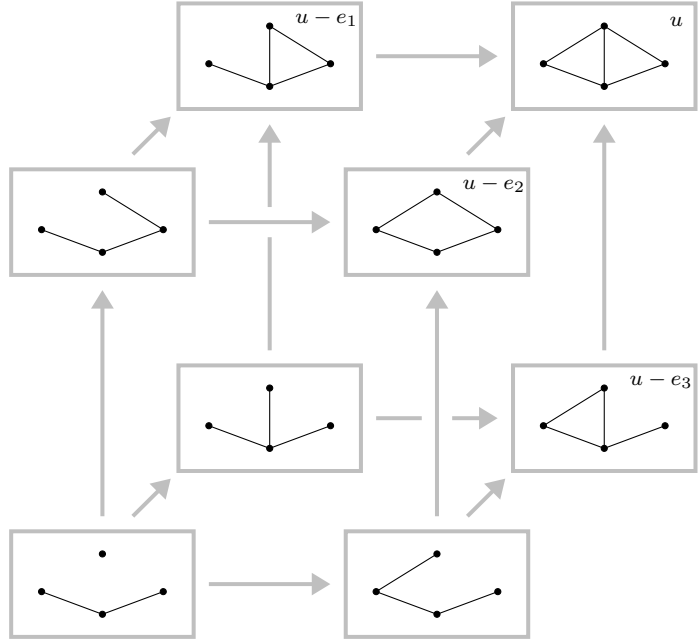

We now state conditions involving the fixed grade $u\in \N^n$ and the sets $\G (X_{q+1})$ and $\G (X_q)$ of entrance grades which ensure that the assumptions in Theorem \ref{thm:H1K} on the subspaces $B_q$ and $Z_q$ are verified.

\begin{prop}\label{prop:suffBq}
If $u\notin \G(X_{q+1})$, then $B_q(X^u)=\sum_{j=1}^n B_q(X^{u-e_j})$.
\end{prop}

\begin{proof}
The inclusion $\sum_{j=1}^n B_q(X^{u-e_j}) \subseteq B_q(X^u)$ holds in general and follows from $B_q(X^{u-e_j}) \subseteq B_q(X^u)$ for every $j\in [n]$. To see the other inclusion, we first observe that $u\notin \G(X_{q+1})$ implies $C_{q+1}(X^u)=C_{q+1}(\cup_{j=1}^n X^{u-e_j})$, which is equal to $\sum_{j=1}^n C_{q+1}(X^{u-e_j})$. The claim then follows from the equalities $\partial_{q+1} (C_{q+1}(X^u))=B_{q}(X^u)$ and $\partial_{q+1} (\sum_{j=1}^n C_{q+1}(X^{u-e_j}))= \sum_{j=1}^n B_{q}(X^{u-e_j})$. 
\end{proof}

\begin{remark}
The converse implication is false as can be seen, for $q=0$, considering a cell complex with two vertices connected by two edges and the following $1$-parameter filtration: the two vertices and one edge enter at grade $u-e_1$, and the other edge enters at grade $u$. 
\end{remark}

\begin{prop}\label{prop:suffZq}
If $u\notin \overline{\G(X_{q})}$, then there exists a permutation $\rho \in \mathrm{Sym}(n)$ such that, for every $\ell \in [n]$,
\[
Z_q(X^{u-e_{\rho(\ell)}})\cap \left( \sum_{j< \ell}Z_q(X^{u-e_{\rho(j)}})\right) = \sum_{j< \ell}Z_q(X^{u-e_{\rho(j)}-e_{\rho(\ell)}}) .
\]    
\end{prop}
\begin{proof}
We prove the statement by induction on the number $n$ of parameters. By Remark \ref{rmk:Zq2param}, for $n=1$ and $n=2$ the identities involving subspaces $Z_q$ hold in general. 

To prove the induction step for $n$ parameters, we recall that by Corollary~\ref{coro:A_Mq} we have $u\notin \overline{\G (X_q)}$ if and only if there exists $k \in [n]$ such that
$X_q^{w(\alpha_k)-e_k} = X_q^{w(\alpha_k)}$, for all subsets $\alpha_k \subseteq [n] \smallsetminus \{ k \}$. We take such an index $k$ and set $\rho(n)\coloneqq k$. 
For any $j\ne k$, taking $\alpha_k=\{j\}$, we get $Z_q(X^{u-e_j-e_k})=Z_q(X^{u-e_j})$. 
Hence, $\sum_{j\in [n]\smallsetminus \{k \}} Z_q(X^{u-e_j-e_k})=\sum_{j\in [n]\smallsetminus \{k \}} Z_q(X^{u-e_j})$, which implies  $ Z_q(X^{u-e_k})\cap \left( \sum_{j\in [n]\smallsetminus \{k\}} Z_q(X^{u-e_j})\right) \subseteq \sum_{j\in [n]\smallsetminus \{k\}} Z_q(X^{u-e_j-e_k})$.
The right-hand side is actually equal to the left-hand side, since the reverse inclusion holds in general and follows from the fact that, for every $j\ne k$, 
\[
Z_q(X^{u-e_j-e_k}) = Z_q(X^{u-e_k})\cap Z_q(X^{u-e_j}) \subseteq Z_q(X^{u-e_k})\cap \left( \textstyle \sum_{j\in [n]\smallsetminus \{k\}} Z_q(X^{u-e_j}) \right),
\]
where the first equality is by Remark \ref{rmk:Zq2param}. This proves the  equality involving subspaces $Z_q$ for $\ell=n$.

Lastly, we have to show that for every $\ell <n$ the remaining equalities involving subspaces $Z_q$ in the claim hold. This is a consequence of the inductive hypothesis, observing that the remaining equalities involve the grades in $\{u-e_{\alpha}\}_{\alpha \subseteq [n]\smallsetminus \{k\}}$, which is a portion of an $(n-1)$-parameter filtration, and that $u$ is not a least upper bound of grades in $\G(X_q)$ belonging to this filtration. By relabeling the parameters in $[n]\smallsetminus \{k\}$ of the $(n-1)$-parameter filtration with indices in $[n-1]$ and applying the inductive assumption, we see that there exists a bijection $\rho': [n-1] \to [n]\smallsetminus \{k\}$ such that the first $n-1$ equalities of the claim hold. We complete the proof by defining $\rho (j)\coloneqq \rho'(j)$, for all $j<n$.
\end{proof}

\begin{rmk}
The converse implication is false in general: for example, even if the equalities involving subspaces $Z_q$ are satisfied, $u$ can be the entrance grade of a $q$-cell of $M$ that does not appear in any $q$-cycle. 
\end{rmk}

Using Proposition \ref{prop:suffBq} and Proposition \ref{prop:suffZq} we immediately obtain the following corollary of Theorem \ref{thm:H1K}. We note that the same bound for the support of $\xi_1^q$ was obtained in Section \ref{sec:free} using multigraded resolutions (Remark \ref{rmk:free-bounds}).

\begin{coro}\label{coro:H1K}
Let $\{X^v\}_{v\in \N^n}$ be an $n$-parameter exhaustive filtration of a cell complex $X$. If $u\in \N^n$ is such that $u\notin \G(X_{q+1})\cup \overline{\G(X_q)}$, then $\xi_1^q(u)=0$. In other words, $\supp \xi_1^q \subseteq \G(X_{q+1})\cup \overline{\G(X_q)}$.
\end{coro}

We end this subsection describing two particular cases in which the equalities involving subspaces $Z_q$ in Theorem \ref{thm:H1K} are always satisfied. The first case corresponds to $q=0$. 

\begin{coro}\label{coro:H1Kq0}
Let $\{X^v\}_{v\in \N^n}$ be an $n$-parameter exhaustive filtration of a cell complex $X$, of which we consider the associated $q$-th persistent homology module with $q=0$. If $u\in \N^n$ is such that $B_0(X^u)=\sum_{j=1}^n B_0 (X^{u-e_j})$, then $\xi_1^0(u)=0$. As a consequence, for $q=0$ we have the following containments: 
\[
\supp \xi_0^0 \subseteq \overline{\G(X_{0})}, \qquad
\supp \xi_1^0 \subseteq \G(X_{1}), \qquad
\bigcup_{i=1}^n \supp \xi_i^0 \subseteq \overline{\G(X_{1})}. 
\]
\end{coro}
\begin{proof}
Since the $n$-parameter filtration $\{X^v\}_{v\in \N^n}$ is one-critical (see Remark \ref{rmk:one-criticality}), the following equalities of (graded) sets hold for all $w,v_1,\ldots ,v_k\in \N^n$:
\[
X^w \cap \left( \textstyle \bigcup_{j=1}^k X^{v_j} \right) = \textstyle \bigcup_{j=1}^k  \left(  X^w \cap X^{v_j} \right) = \textstyle \bigcup_{j=1}^k  X^{w\wedge v_j}.
\]
Considering cells of dimension $q$ and taking the $\F$-linear span of the left-hand and right-hand sides one obtains
\[
C_q(X^w) \cap \left( \textstyle \sum_{j=1}^k C_q(X^{v_j}) \right) =  \textstyle \sum_{j=1}^k  C_q(X^{w\wedge v_j}).
\]
For $q=0$, since $Z_0(X^v)=C_0(X^v)$ for all $v\in \N^n$, all the equalities involving subspaces $Z_q$ in Theorem \ref{thm:H1K} are therefore always satisfied. This proves the first part of the claim and, as an immediate consequence using Proposition \ref{prop:suffBq}, the containment $\supp \xi_1^0 \subseteq \G(X_{1})$. The containment $\bigcup_{i=1}^n \supp \xi_i^0 \subseteq \overline{\G(X_{1})}$ follows from the fact that, for any $n$-graded $S$-module $V$, 
$\bigcup_{i=1}^n \supp \xi_i(V) \subseteq \overline{\supp \xi_1 (V)}$, see for example \cite[Remark~3.2]{charalambous2015betti}.
Lastly, the containment $\supp \xi_0^0 \subseteq \overline{\G(X_{0})}$ holds by Theorem \ref{thm:supp_xi_m}.
\end{proof}

The second particular case corresponds to $n=2$, i.e.\ to bifiltrations of persistent homology modules.

\begin{coro}\label{coro:H1Kn2}
Let $\{X^v\}_{v\in \N^2}$ be an $2$-parameter exhaustive filtration of a cell complex $X$. If $u\in \N^2$ is such that $B_q(X^u)=\sum_{j=1}^n B_q (X^{u-e_j})$, then $\xi_1^q(u)=0$. The following containments hold: 
\[
\supp \xi_0^q \subseteq \overline{\G(X_{q})}, \qquad
\supp \xi_1^q \subseteq \G(X_{q+1}), \qquad
\supp \xi_2^q \subseteq \overline{\G(X_{q+1})}. 
\]
\end{coro}
\begin{proof}
The first part of the claim follows from Theorem \ref{thm:H1K} and Remark \ref{rmk:Zq2param}, and it implies $\supp \xi_1^q \subseteq \G(X_{q+1})$ by Proposition \ref{prop:suffBq}. The other containments hold by Theorem \ref{thm:supp_xi_m}.
\end{proof}

\subsection{Morse complex and support of the Betti tables}
\label{subsec:Morse-supp}

We conclude the section by observing how the results  of Section \ref{subsec:suppallBetti} and Section \ref{subsect:xi1npar} can be applied to the Morse complex $M$ associated with any discrete gradient vector field $\V$ consistent with the filtration $\{X^u\}_{u\in\N^n}$ of $X$ (see Section \ref{subsec:DMT}). By Proposition \ref{prop:iso_pers_mod}, the persistent homology module $V'_q\coloneqq \bigoplus_{u\in\N^n} H_q(M^u)$ associated with $\{M^u\}_{u\in\N^n}$ is isomorphic to $V_q$, hence the Betti tables of $V'_q$ coincide with the Betti tables $\xi_i^q$ of $V_q$. This can be seen for example from the fact that, as observed at the end of Section \ref{subsec:Kosz1}, the Koszul complexes of $V'_q$ and $V_q$ at any $u\in\N^n$ are isomorphic. Therefore, one can bound the support of the Betti tables of $V_q$ using the entrance grades of the cells of $M$.
For example, Theorem \ref{thm:supp_xi_m} has the following immediate consequence.

\begin{coro}
\label{coro:supp_xi_m}
Let $\{ X^u \}_{u\in \N^n}$ be an $n$-parameter exhaustive filtration of a cell complex $X$,
let $\V$ be a fixed discrete gradient vector field consistent with the filtration, and let $\{ M^u \}_{u\in \N^n}$ be the associated $n$-parameter filtration of the Morse complex $M$. Then
\[ \bigcup_{i=0}^n \supp \xi^q_i   \subseteq \overline{\G (M_{q+1})} \cup \overline{\G (M_{q})},
\]
for all $q\in \N$. Furthermore, $\supp \xi^q_0   \subseteq \overline{\G (M_q)}$ and $\supp \xi^q_n   \subseteq \overline{\G (M_{q+1})}$, for all $q\in \N$.
\end{coro}

Similarly, we can summarize as follows the statements corresponding to Theorem \ref{thm:H1K}, Corollary \ref{coro:H1K}, Corollary \ref{coro:H1Kq0} and Corollary \ref{coro:H1Kn2} applied to the Morse complex.
\begin{coro}
\label{coro:supp_x1_m}
Let $\{ X^u \}_{u\in \N^n}$ be an $n$-parameter exhaustive filtration of a cell complex $X$,
let $\V$ be a fixed discrete gradient vector field consistent with the filtration, and let $\{ M^u \}_{u\in \N^n}$ be the associated $n$-parameter filtration of the Morse complex $M$. Then the following facts hold.
\begin{enumerate}
    \item If $u\in \N^n$ satisfies $B_q(M^u) = \sum_{j=1}^n B_q(M^{u-e_j})$ and there exists a permutation $\rho \in \mathrm{Sym}(n)$ such that, for every $\ell \in [n]$,
\[
Z_q(M^{u-e_{\rho(\ell)}})\cap \left( \sum_{j< \ell}Z_q(M^{u-e_{\rho(j)}})\right) = \sum_{j< \ell}Z_q(M^{u-e_{\rho(j)}-e_{\rho(\ell)}}) ,
\]
then $\xi_1^q(u)=0$. As a consequence, the containment $\supp \xi_1^q \subseteq \G(M_{q+1})\cup \overline{\G(M_q)}$ holds.
\item In the case $q=0$, if $u\in \N^n$ is such that $B_0(M^u)=\sum_{j=1}^n B_0 (M^{u-e_j})$, then $\xi_1^0(u)=0$. As a consequence, the following containments hold: 
\[
\supp \xi_0^0 \subseteq \overline{\G(M_{0})}, \qquad
\supp \xi_1^0 \subseteq \G(M_{1}), \qquad
\bigcup_{i=1}^n \supp \xi_i^0 \subseteq \overline{\G(M_{1})}. 
\]
\item In the case $n=2$, if $u\in \N^2$ is such that $B_q(M^u)=\sum_{j=1}^n B_q (M^{u-e_j})$, then $\xi_1^q(u)=0$. The following containments hold: 
\[
\supp \xi_0^q \subseteq \overline{\G(M_{q})}, \qquad
\supp \xi_1^q \subseteq \G(M_{q+1}), \qquad
\supp \xi_2^q \subseteq \overline{\G(M_{q+1})}. 
\]
\end{enumerate}
\end{coro}

\section{Homological critical grades and support of Betti tables for bifiltrations}
\label{sec:2param}

In this section, we fix $n=2$ and study the support of the Betti tables of persistent homology modules associated with a one-critical bifiltration $\{ X^u \}_{u\in \N^2}$ of a cell complex $X$. 
In what follows, we make use of the notations introduced at the beginning of Section \ref{sec:support_xi}. Additionally, for $q\in \N$, let us set
\[
\C_q (X) \coloneqq \{ u\in \N^2 \mid \dim H_q (X^u, X^{u-e_1}\cup X^{u-e_2}) \ne 0 \} 
\]
and call it the set of \emph{$q$-homological critical grades} (see \cite{guidolin2023morse}). 
For any fixed $u\in\N^2$ and any $q\in \N$, let us recall the following known inequalities (see \cite[Corollary~1]{landi2022relative}, and \cite{guidolin2023morse} for a generalization to the case $n\ge 2$):
\begin{equation}
\label{eq:ineq_criticalgrade}
\xi_0^{q}(u) +\xi_1^{q-1}(u) -\xi_2^{q-1}(u) \le \dim H_q(X^u,X^{u-e_1}\cup X^{u-e_2}) \le  \xi_0^{q}(u) +\xi_1^{q-1}(u) +\xi_2^{q-2}(u) .
\end{equation}

To interpret the results of this section, we remark that $\C_q (X) \subseteq \G (X_q)$ and, more generally, if $M$ is the Morse complex associated with any discrete gradient vector field consistent with the filtration $\{X^{u}\}_{u\in \N^2}$, by \cite[Prop.~1]{landi2022relative} we have $\C_q (X) \subseteq \G (M_q)$. As we will show (Proposition \ref{prop:supp_xi_2param} and Corollary \ref{coro:supp_2par}), for bifiltrations we are able to bound the support of the Betti tables using the sets $\C_q (X)$ instead of the sets $\G (M_q)$, thus strengthening our general results of Section \ref{sec:support_xi} (cf.\ Corollary \ref{coro:supp_xi_m} and Corollary \ref{coro:supp_x1_m}).

First, we prove a technical result that crucially depends on the one-criticality assumption (Section \ref{subsec:multipers}) on the bifiltration.

\begin{lem}
\label{lem:onecrit}
Let $v\in \N^2$ and let $j\ne \ell$ in $\{ 1,2 \}$. Then, there is a short exact sequence of chain complexes
\[
0 \xrightarrow{} C_* (X^{v-e_{\ell}}, X^{v-e_1-e_2}) 
\xrightarrow{} C_* (X^{v}, X^{v-e_j})
\xrightarrow{} C_* (X^{v}, X^{v-e_1} \cup X^{v-e_2})
\xrightarrow{} 0 .
\]
\end{lem}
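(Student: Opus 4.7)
The plan is to reduce the statement to two standard ingredients: the short exact sequence of a triple of subcomplexes, and the identification, under one-criticality, of the relative chain complex $C_*(X^{v-e_1}\cup X^{v-e_2}, X^{v-e_j})$ with $C_*(X^{v-e_\ell}, X^{v-e_1-e_2})$.

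First, I would set up the two obvious maps. Since $X^{v-e_1-e_2}\subseteq X^{v-e_j}$, the inclusion $X^{v-e_\ell}\hookrightarrow X^v$ descends to a chain map
\[
\iota : C_*(X^{v-e_\ell}, X^{v-e_1-e_2}) \longrightarrow C_*(X^v, X^{v-e_j}),
\]
and since $X^{v-e_j}\subseteq X^{v-e_1}\cup X^{v-e_2}$, the further quotient gives the canonical surjection
\[
\pi : C_*(X^v, X^{v-e_j}) \longrightarrow C_*(X^v, X^{v-e_1}\cup X^{v-e_2}).
\]
Surjectivity of $\pi$ is immediate, and $\pi\circ \iota=0$ because the image of $\iota$ lies in the submodule generated by cells of $X^{v-e_\ell}\subseteq X^{v-e_1}\cup X^{v-e_2}$.

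Next, I would identify the kernel of $\pi$ via a triple. By the standard short exact sequence for the triple $X^{v-e_j}\subseteq X^{v-e_1}\cup X^{v-e_2}\subseteq X^v$, there is a short exact sequence
\[
0 \to C_*(X^{v-e_1}\cup X^{v-e_2},\, X^{v-e_j}) \to C_*(X^v, X^{v-e_j}) \xrightarrow{\pi} C_*(X^v, X^{v-e_1}\cup X^{v-e_2}) \to 0.
\]
It therefore remains to exhibit an isomorphism between $C_*(X^{v-e_1}\cup X^{v-e_2},\, X^{v-e_j})$ and $C_*(X^{v-e_\ell}, X^{v-e_1-e_2})$ that makes the resulting diagram commute with $\iota$. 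This is the one and only place where one-criticality (Remark~2.1) enters: it gives $X^{v-e_1}\cap X^{v-e_2}=X^{v-e_1-e_2}$, hence $C_*(X^{v-e_1})\cap C_*(X^{v-e_2})=C_*(X^{v-e_1-e_2})$ at the level of bases, and the second isomorphism theorem for chain modules applied to $C_*(X^{v-e_1}\cup X^{v-e_2})=C_*(X^{v-e_j})+C_*(X^{v-e_\ell})$ yields
\[
\frac{C_*(X^{v-e_1}\cup X^{v-e_2})}{C_*(X^{v-e_j})} \;\cong\; \frac{C_*(X^{v-e_\ell})}{C_*(X^{v-e_\ell})\cap C_*(X^{v-e_j})} \;=\; \frac{C_*(X^{v-e_\ell})}{C_*(X^{v-e_1-e_2})}.
\]
Tracing through the maps, this isomorphism is induced by the inclusion $X^{v-e_\ell}\hookrightarrow X^{v-e_1}\cup X^{v-e_2}$, so the composition with the kernel inclusion above is exactly $\iota$. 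Splicing the two sequences gives the claim.

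I do not expect a serious obstacle. The only substantive content is the bookkeeping to make sure that one-criticality is invoked correctly when identifying intersections of subcomplexes with the chain complex of the intersection; once this is recorded, the rest is formal algebra. A slicker presentation one could alternatively adopt is to work directly on the natural basis given by cells: under one-criticality, the cells of $X^v\smallsetminus X^{v-e_j}$ partition as the disjoint union of the cells of $X^{v-e_\ell}\smallsetminus X^{v-e_1-e_2}$ and those of $X^v\smallsetminus(X^{v-e_1}\cup X^{v-e_2})$, which provides an $\F$-linear splitting and makes exactness at every term transparent; this avoids invoking the triple sequence explicitly but is morally the same argument.
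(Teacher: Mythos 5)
Your argument is correct and mirrors the paper's own proof: both use the short exact sequence of the triple $X^{v-e_j}\subseteq X^{v-e_1}\cup X^{v-e_2}\subseteq X^v$ and then identify $C_*(X^{v-e_1}\cup X^{v-e_2}, X^{v-e_j})$ with $C_*(X^{v-e_\ell}, X^{v-e_1-e_2})$ via the second isomorphism theorem together with the one-criticality identity $X^{v-e_1}\cap X^{v-e_2}=X^{v-e_1-e_2}$. The extra care you take in tracing the identification back to $\iota$, and the alternative basis-counting remark, are fine but do not change the underlying route.
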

\begin{rmk}
\label{rmk:notN2}    
The statement has to be interpreted by setting $X^{v-e_1}=\emptyset$ if $v-e_1$ is not in $\N^2$, and similarly for $X^{v-e_2}$ and $X^{v-e_1 -e_2}$. We use this convention throughout this section. 
\end{rmk}
\begin{proof}
Without loss of generality, we prove the statement for $j=1$ and $\ell=2$.
The sequence
\[
0 \xrightarrow{} C_* (X^{v-e_1} \cup X^{v-e_2}, X^{v-e_1}) 
\xrightarrow{} C_* (X^{v}, X^{v-e_1})
\xrightarrow{} C_* (X^{v}, X^{v-e_1} \cup X^{v-e_2})
\xrightarrow{} 0
\]
associated with the triple $X^{v-e_1} \subseteq X^{v-e_1} \cup X^{v-e_2}\subseteq X^{v}$ is exact. 
Now we observe that, for any $q\in\N$, the relative chain modules of the pair $(X^{v-e_1} \cup X^{v-e_2}, X^{v-e_1})$ are
\begin{align*}
C_q (X^{v-e_1} \cup X^{v-e_2}, X^{v-e_1}) := \frac{C_q (X^{v-e_1} \cup X^{v-e_2})}{C_q (X^{v-e_1})} = \frac{C_q (X^{v-e_1}) + C_q(X^{v-e_2})}{C_q (X^{v-e_1})} \\
\cong \frac{C_q(X^{v-e_2})}{C_q (X^{v-e_1})\cap C_q(X^{v-e_2})} = \frac{C_q(X^{v-e_2})}{C_q(X^{v-e_1-e_2})} =: C_q (X^{v-e_2}, X^{v-e_1 -e_2}) ,
\end{align*}
where we used the classical isomorphism theorem for modules and, in the penultimate equality, the fact that  $C_q (X^{v-e_1})\cap C_q(X^{v-e_2}) = C_q(X^{v-e_1-e_2})$ as a consequence of the equality $X^{v-e_1}\cap X^{v-e_2} = X^{v-e_1-e_2}$ given by the one-criticality assumption on the filtration (see Remark \ref{rmk:one-criticality}). 
These isomorphisms between chain modules commute with the differentials of the chain complexes $C_* (X^{v-e_1} \cup X^{v-e_2}, X^{v-e_1})$ and $C_* (X^{v-e_2}, X^{v-e_1 -e_2})$, since they are induced by the differential of $C_*(X)$.
\end{proof}

\begin{coro}
\label{coro:n2inductivestep}
Let $v\in \N^2$, $q\in \N$ and $j\ne \ell$ in $\{ 1,2 \}$, and suppose that $H_q (X^{v}, X^{v-e_1} \cup X^{v-e_2})=0$. Then $H_q (X^{v}, X^{v-e_j})\ne 0$ implies $H_q (X^{v-e_{\ell}}, X^{v-e_1-e_2})\ne 0$.
\end{coro}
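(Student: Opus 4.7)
The plan is to apply the short exact sequence of chain complexes from Lemma~\ref{lem:onecrit} and pass to the associated long exact sequence in homology. Specifically, from Lemma~\ref{lem:onecrit} we have
\[
0 \xrightarrow{} C_* (X^{v-e_{\ell}}, X^{v-e_1-e_2})
\xrightarrow{} C_* (X^{v}, X^{v-e_j})
\xrightarrow{} C_* (X^{v}, X^{v-e_1} \cup X^{v-e_2})
\xrightarrow{} 0 ,
\]
which gives rise to a long exact sequence containing the fragment
\[
\cdots \to H_q (X^{v-e_{\ell}}, X^{v-e_1-e_2}) \to H_q (X^{v}, X^{v-e_j}) \to H_q (X^{v}, X^{v-e_1} \cup X^{v-e_2}) \to \cdots .
\]

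The hypothesis $H_q (X^{v}, X^{v-e_1} \cup X^{v-e_2})=0$ forces the connecting map $H_q (X^{v-e_{\ell}}, X^{v-e_1-e_2}) \to H_q (X^{v}, X^{v-e_j})$ to be surjective. Hence any nontrivial class in $H_q(X^v, X^{v-e_j})$ lifts to a nontrivial class in $H_q(X^{v-e_\ell}, X^{v-e_1-e_2})$, so $H_q (X^{v}, X^{v-e_j})\ne 0$ immediately yields $H_q (X^{v-e_{\ell}}, X^{v-e_1-e_2})\ne 0$, which is the desired conclusion.

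There is essentially no obstacle: the statement is a purely formal consequence of Lemma~\ref{lem:onecrit} combined with the standard long exact sequence in homology. The only minor care needed is to respect the convention from Remark~\ref{rmk:notN2} that $X^{w}=\emptyset$ when $w\notin \N^2$, so that the relative chain complexes (and hence their homology modules) are well-defined in all cases; this convention has already been used in the proof of Lemma~\ref{lem:onecrit} and so is automatically compatible with the argument above.
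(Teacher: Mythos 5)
Your proof is correct and takes essentially the same route as the paper: apply Lemma~\ref{lem:onecrit}, pass to the long exact sequence in homology, and read off surjectivity from the vanishing of $H_q(X^v, X^{v-e_1}\cup X^{v-e_2})$. One small terminological slip: the map $H_q(X^{v-e_\ell}, X^{v-e_1-e_2}) \to H_q(X^v, X^{v-e_j})$ is the map induced by the inclusion of chain complexes, not the connecting homomorphism (which would drop the homological degree by one); the argument itself is unaffected.
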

\begin{proof}
By Lemma \ref{lem:onecrit}, the following is a portion of a long exact sequence in homology: 
\[
 H_q (X^{v-e_{\ell}}, X^{v-e_1-e_2}) 
\xrightarrow{} H_q (X^{v}, X^{v-e_j})
\xrightarrow{} H_q (X^{v}, X^{v-e_1} \cup X^{v-e_2}) .
\]
Since $H_q (X^{v}, X^{v-e_1} \cup X^{v-e_2})=0$, the first map is surjective, and the claim follows immediately.
\end{proof}

To prove the final result of this section (Proposition \ref{prop:supp_xi_2param}), we first show directly that the support of the Betti table $\xi_2^{q-1}$ is contained in $\overline{\C_q (X)}$.

\begin{lem}
\label{lem:xi2_2param}
For all $q\in \N$, we have $\supp \xi_2^{q-1} \subseteq \overline{\C_q (X)}$. 
\end{lem}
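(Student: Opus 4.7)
The plan is to prove the contrapositive: if $u \notin \overline{\C_q(X)}$, then $\xi_2^{q-1}(u) = \dim \ker d_2 = 0$, where $d_2 \colon H_{q-1}(X^{u-e_1-e_2}) \to H_{q-1}(X^{u-e_1}) \oplus H_{q-1}(X^{u-e_2})$ is the Koszul differential described in Section~\ref{subsec:Kosz1}. As a first reduction, observe that $u \notin \overline{\C_q(X)}$ is equivalent, by a purely lattice-theoretic argument analogous to the one in the proof of Proposition~\ref{prop:equiv_u_lub} (with grades of $\N^2$ in place of entrance grades of cells), to the existence of $j \in \{1,2\}$ such that no $v \in \C_q(X)$ with $v \preceq u$ satisfies $v_j = u_j$. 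By the symmetry of the two coordinates we may assume $j = 1$.

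Under this assumption, for every $v_2 \in \{0, 1, \ldots, u_2\}$ the grade $v := (u_1, v_2)$ satisfies $v \preceq u$ and $v_1 = u_1$, hence $v \notin \C_q(X)$, that is, $H_q(X^v, X^{v-e_1} \cup X^{v-e_2}) = 0$. Applying at such $v$ the long exact sequence induced by the short exact sequence of Lemma~\ref{lem:onecrit} with $j = 1$ and $\ell = 2$, the vanishing of $H_q(X^v, X^{v-e_1} \cup X^{v-e_2})$ yields a surjection
\[
H_q(X^{v-e_2}, X^{v-e_1-e_2}) \twoheadrightarrow H_q(X^v, X^{v-e_1}) .
\]
Writing $F(v_2) := H_q(X^{(u_1, v_2)}, X^{(u_1-1, v_2)})$, this reads $F(v_2 - 1) \twoheadrightarrow F(v_2)$. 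Starting from $F(-1) = H_q(\emptyset, \emptyset) = 0$ (using the convention $X^w = \emptyset$ for $w \notin \N^2$ from Remark~\ref{rmk:notN2}) and chaining these surjections for $v_2 = 0, 1, \ldots, u_2 - 1$, one concludes $F(u_2 - 1) = 0$, that is,
\[
H_q(X^{u-e_2}, X^{u-e_1-e_2}) = 0 .
\]

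The long exact sequence of the pair $(X^{u-e_2}, X^{u-e_1-e_2})$ contains the portion
\[
H_q(X^{u-e_2}, X^{u-e_1-e_2}) \to H_{q-1}(X^{u-e_1-e_2}) \xrightarrow{\iota_{q-1}^{u-e_1-e_2, \, u-e_2}} H_{q-1}(X^{u-e_2}),
\]
and the vanishing of the leftmost term forces $\iota_{q-1}^{u-e_1-e_2, \, u-e_2}$ to be injective. Since this map coincides up to sign with the second component of $d_2$, the differential $d_2$ is itself injective, and therefore $\xi_2^{q-1}(u) = 0$, as desired. The symmetric case $j = 2$ follows identically, showing instead that $H_q(X^{u-e_1}, X^{u-e_1-e_2}) = 0$ and that the first component of $d_2$ is injective.

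The key conceptual step is realizing that the hypothesis $u \notin \overline{\C_q(X)}$ delivers the vanishing of the relative homology $H_q(X^v, X^{v-e_1} \cup X^{v-e_2})$ not only at $u$ itself but along an entire coordinate line of grades $v = (u_1, v_2)$ below $u$; Lemma~\ref{lem:onecrit} is precisely the tool needed to transport this vanishing inductively from the coordinate axis up to the target pair $(X^{u-e_2}, X^{u-e_1-e_2})$. The potential obstacle — that a single application of Corollary~\ref{coro:n2inductivestep} at $u$ alone is not strong enough to pin down $\ker d_2$ — is overcome precisely by iterating the short exact sequence of Lemma~\ref{lem:onecrit} along this entire line.
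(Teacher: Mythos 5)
Your proof is correct, but it takes a genuinely different route from the paper's. Both arguments exploit, whenever $v\notin\C_q(X)$, the surjection $H_q(X^{v-e_\ell},X^{v-e_1-e_2}) \twoheadrightarrow H_q(X^v,X^{v-e_j})$ coming from Lemma~\ref{lem:onecrit}; however, they differ in the direction of the argument, the geometry of propagation, and the termination mechanism. The paper argues directly: from $u\in\supp\xi_2^{q-1}$ it first extracts $H_q(X^{u-e_1},X^{u-e_1-e_2})\ne 0$ via the long exact sequence of that pair, then propagates this \emph{nonvanishing} outward along the horizontal ray $\{u-\lambda e_1\}_{\lambda\ge 1}$ using Corollary~\ref{coro:n2inductivestep}, and derives a contradiction with eventual emptiness of the filtration unless the ray meets $\C_q(X)$ at some $u-\lambda e_1$; repeating in the $e_2$-direction yields $u-\mu e_2\in\C_q(X)$, and the join $(u-\lambda e_1)\vee(u-\mu e_2)=u$ gives $u\in\overline{\C_q(X)}$. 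You instead prove the contrapositive and open with a lattice-theoretic reduction (the abstract fact underlying Proposition~\ref{prop:equiv_u_lub}): $u\notin\overline{\C_q(X)}$ produces a coordinate direction, say $j=1$, so that the whole vertical segment $\{(u_1,v_2):0\le v_2\le u_2\}$ avoids $\C_q(X)$; you then propagate the \emph{vanishing} $F(v_2):=H_q(X^{(u_1,v_2)},X^{(u_1-1,v_2)})=0$ inward from the trivial base $F(-1)=0$ up to $F(u_2-1)=H_q(X^{u-e_2},X^{u-e_1-e_2})=0$, from which injectivity of $\iota_{q-1}^{u-e_1-e_2,u-e_2}$ and hence $\ker d_2=0$ follow. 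The two inductions thus run along perpendicular lines and propagate dual conditions (nonvanishing moving away from $u$ versus vanishing moving toward $u$). Your lattice reduction buys a bounded, self-contained induction with a concrete base case at $v_2=-1$, and sidesteps both the appeal to eventual emptiness of the filtration and the final combination of two critical grades via a join; the paper's route avoids the initial reduction at the price of a contradiction-based termination and a combination step at the end.
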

\begin{proof}
Let $u\in \supp \xi_2^{q-1}$. We prove that there exists $\lambda \in \N$ such that 
\begin{equation}
\label{eq:relhom}
H_{q}(X^{u-\lambda e_1},X^{u-(\lambda +1) e_1}\cup X^{u-\lambda e_1 -e_2})\ne 0 .
\end{equation}
If condition (\ref{eq:relhom}) holds for $\lambda =0$, then $u\in \C_q (X)$.
Otherwise, since the same property can be proven with the roles of $e_1$ and $e_2$ interchanged, our claim follows by observing that $(u-\lambda e_1)\vee (u-\mu e_2)=u$, for every $\lambda, \mu \in \N$.

Assume that (\ref{eq:relhom}) is false (i.e.\ it is an equality) for all $\lambda \in \N$; 
then $H_q (X^{u-\lambda e_1},X^{u-\lambda e_1 -e_2})\ne 0$ implies $H_q (X^{u-(\lambda +1)e_1},X^{u-(\lambda +1)e_1 -e_2})\ne 0$ by Corollary \ref{coro:n2inductivestep} (applied with $v:= u-\lambda e_1$), and we can therefore use an inductive argument. The base case of the induction is $H_q (X^{u-e_1},X^{u-e_1 -e_2})\ne 0$ for $\lambda =1$, which holds because the hypothesis $u\in \supp \xi_2^{q-1}$ implies that $i_{q-1}^{u-e_1-e_2,u-e_1}:H_{q-1}(X^{u-e_1-e_2})\to H_{q-1}(X^{u-e_1})$ has nonzero kernel (see Section \ref{subsec:Kosz1}). Since $X^{u-\lambda e_1}=\emptyset =X^{u-\lambda e_1 -e_2}$ for a sufficiently large $\lambda$, we see that the induction leads to a contradiction.
\end{proof}

\begin{prop}
\label{prop:supp_xi_2param}
For all $q\in \N$, we have $\supp \xi_0^{q}\cup \supp \xi_1^{q-1} \cup \supp \xi_2^{q-1} \subseteq \overline{\C_q (X)}$.
\end{prop}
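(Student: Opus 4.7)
The plan is to bootstrap off Lemma \ref{lem:xi2_2param} together with the left-hand Morse-type inequality (\ref{eq:ineq_criticalgrade}). Lemma \ref{lem:xi2_2param} already handles $\supp \xi_2^{q-1} \subseteq \overline{\C_q(X)}$, so only the containments $\supp \xi_0^{q} \subseteq \overline{\C_q(X)}$ and $\supp \xi_1^{q-1} \subseteq \overline{\C_q(X)}$ remain, and I would treat them uniformly by a simple case split on the value of $\xi_2^{q-1}(u)$.

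Fix $u\in \N^2$ with $\xi_0^{q}(u) + \xi_1^{q-1}(u) > 0$. I split into two cases according to whether $\xi_2^{q-1}(u)$ vanishes. If $\xi_2^{q-1}(u)\ne 0$, then directly by Lemma \ref{lem:xi2_2param} we have $u\in \overline{\C_q(X)}$. If instead $\xi_2^{q-1}(u)=0$, I apply the left-hand inequality in (\ref{eq:ineq_criticalgrade}):
\[
\dim H_q(X^u, X^{u-e_1}\cup X^{u-e_2}) \;\ge\; \xi_0^{q}(u) + \xi_1^{q-1}(u) - \xi_2^{q-1}(u) \;=\; \xi_0^{q}(u) + \xi_1^{q-1}(u) \;>\; 0,
\]
which, by definition of $\C_q(X)$, means $u\in \C_q(X) \subseteq \overline{\C_q(X)}$.

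Combining this case analysis with Lemma \ref{lem:xi2_2param} yields the claimed inclusion
\[
\supp \xi_0^{q}\cup \supp \xi_1^{q-1} \cup \supp \xi_2^{q-1} \;\subseteq\; \overline{\C_q(X)}.
\]
I do not anticipate any real obstacle here: the hard content is packaged into Lemma \ref{lem:xi2_2param} (which uses one-criticality via Corollary \ref{coro:n2inductivestep}) and into the inequality (\ref{eq:ineq_criticalgrade}) cited from earlier work. The only thing to be careful about is that the left-hand bound in (\ref{eq:ineq_criticalgrade}) can go negative in general, so one must not try to argue from it when $\xi_2^{q-1}(u)$ is large — precisely the reason for separating the two cases above.
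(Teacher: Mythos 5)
Your proof is correct and uses exactly the same two ingredients as the paper's: Lemma \ref{lem:xi2_2param} to control $\xi_2^{q-1}$, and the left-hand inequality in (\ref{eq:ineq_criticalgrade}) to conclude. The paper packages the argument as a contrapositive (assume $u\notin\overline{\C_q(X)}$, deduce all three Betti numbers vanish) whereas you argue directly with a case split on $\xi_2^{q-1}(u)$; the two are logically equivalent.
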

\begin{proof}
Let us assume that $u\notin \overline{\C_q (X)}$. In the first inequality of (\ref{eq:ineq_criticalgrade}), the term
$\dim H_q(X^u,X^{u-e_1}\cup X^{u-e_2})$ is zero by definition of $\C_q (X)$. By Lemma \ref{lem:xi2_2param}, $\xi_2^{q-1}(u)=0$, hence we have $\xi_0^{q}(u) +\xi_1^{q-1}(u)=0$, which is equivalent to $\xi_0^{q}(u) =\xi_1^{q-1}(u)=0$. 
\end{proof}

We observe that the inclusion $\supp \xi_0^q \subseteq \overline{\C_q (X)}$ can be proven directly, in a similar way to the proof of Lemma \ref{lem:xi2_2param}. Contrarily, a direct proof of the inclusion $\supp \xi_1^{q-1} \subseteq \overline{\C_q (X)}$ eludes us.

In conclusion, for bifiltrations, we can bound the support of Betti tables as follows.
\begin{coro}
\label{coro:supp_2par}
For all $q\in \N$, the Betti tables of degree $q$ satisfy
\[
\supp \xi_0^{q}\cup \supp \xi_1^{q} \cup \supp \xi_2^{q} \, \subseteq \, \overline{\C_q (X)} \cup \overline{\C_{q+1} (X)}.
\]
Furthermore, the union of the supports of all Betti tables satisfies 
\[
\bigcup_q \C_q (X) \, \subseteq \, \bigcup_{q,i} \supp \xi_i^{q} \, \subseteq \, \bigcup_q\overline{\C_q (X)} .
\]
\end{coro}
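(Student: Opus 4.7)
The plan is to deduce Corollary \ref{coro:supp_2par} directly from Proposition \ref{prop:supp_xi_2param} and the already-cited double inequality (\ref{eq:ineq_criticalgrade}); no new homological machinery should be necessary, since all the delicate work (the one-criticality argument in Lemma \ref{lem:onecrit} and the inductive collapse in Lemma \ref{lem:xi2_2param}) has already been carried out upstream.

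For the first display, I would simply reindex Proposition \ref{prop:supp_xi_2param}, which asserts $\supp \xi_0^{q}\cup \supp \xi_1^{q-1} \cup \supp \xi_2^{q-1} \subseteq \overline{\C_q (X)}$ for every $q\in\N$. Applying this at degree $q$ yields $\supp \xi_0^q \subseteq \overline{\C_q(X)}$, while applying it at degree $q+1$ yields $\supp \xi_1^q \cup \supp \xi_2^q \subseteq \overline{\C_{q+1}(X)}$. Taking the union of these two inclusions gives exactly $\supp \xi_0^q \cup \supp \xi_1^q \cup \supp \xi_2^q \subseteq \overline{\C_q(X)} \cup \overline{\C_{q+1}(X)}$.

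For the second display, the right-hand inclusion is an immediate consequence of the first display, since any grade in $\supp \xi_i^q$ lies in $\overline{\C_q(X)} \cup \overline{\C_{q+1}(X)} \subseteq \bigcup_q \overline{\C_q(X)}$. The only inclusion requiring a distinct argument is the left-hand one, and for this I would invoke the upper bound in (\ref{eq:ineq_criticalgrade}): if $u\in \C_q(X)$ then by definition $\dim H_q(X^u, X^{u-e_1}\cup X^{u-e_2})\ge 1$, and the inequality $\dim H_q(X^u, X^{u-e_1}\cup X^{u-e_2}) \le \xi_0^q(u) + \xi_1^{q-1}(u) + \xi_2^{q-2}(u)$ then forces at least one of these three values to be nonzero. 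In particular, $u$ lies in $\supp \xi_0^q \cup \supp \xi_1^{q-1} \cup \supp \xi_2^{q-2}$, hence in $\bigcup_{q,i}\supp \xi_i^q$.

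I do not anticipate any genuine obstacle here; the entire statement is essentially a bookkeeping corollary, and the only subtlety is noticing that the two different displays in (\ref{eq:ineq_criticalgrade}) play asymmetric roles: the second display of the corollary uses the \emph{upper} bound to produce support from criticality, whereas the first display (via Proposition \ref{prop:supp_xi_2param}) rests on the \emph{lower} bound to rule out support away from criticality. This asymmetry is worth flagging, but it does not introduce any technical difficulty.
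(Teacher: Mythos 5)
Your proof is correct and follows exactly the same route as the paper: the first display by reindexing Proposition~\ref{prop:supp_xi_2param} at degrees $q$ and $q+1$ and taking the union, and the left-hand inclusion of the second display by reading the upper bound in~(\ref{eq:ineq_criticalgrade}) at a grade $u\in\C_q(X)$. Your closing remark about the asymmetric roles of the two inequalities in~(\ref{eq:ineq_criticalgrade}) is accurate and worth keeping in mind.
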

\begin{proof}
The first statement holds by Proposition \ref{prop:supp_xi_2param} and implies the second inclusion of the second statement. 
The first inclusion of the second statement follows from the second inequality of (\ref{eq:ineq_criticalgrade}), which implies that $\C_q (X) \, \subseteq \, \supp \xi_0^{q}\cup \supp \xi_1^{q-1} \cup \supp \xi_2^{q-2}$, for all $q\in \N$.
\end{proof}

We remark that the first statement of Corollary \ref{coro:supp_2par} is not a consequence of Theorem \ref{thm:supp_xi_m}, as for 2-parameter persistent homology modules it is known that $\C_q (X)$ can be strictly contained in $\G (M_q)$, for any choice of a discrete gradient vector field to determine the latter set of grades (see \cite[p.~2369]{landi2022relative} for an example). 

For $n>2$ parameters, we believe that exact sequences like those of Lemma \ref{lem:onecrit}, along with those induced in homology, can still be useful to study the relation between Betti tables and homological critical grades. In this case, however, these sequences assemble in much more complicated systems, and appropriately disentangling them would require a  different approach.

\section{Generalization to multi-critical filtrations}
\label{sect:not-one-crit}

In this last section, we discuss how the results of Section \ref{sec:support_xi} and Section \ref{sec:2param} can be generalized to an $n$-parameter filtration $\{X^u\}_{u\in \N^n}$ that is not one-critical (Section \ref{subsec:multipers}). Such filtrations are called \emph{multi-critical}.
As observed in Section \ref{sec:free}, one-criticality ensures that the chain complex associated with the filtration $\{X^u\}_{u\in \N^n}$ is composed of free $n$-graded $S$-modules. More specifically, for any $q\in \N$, $C_q\coloneqq \bigoplus_{u\in\N^n}C_q(X^u)$ is free and isomorphic to $\bigoplus_{\sigma \in X_q} S(-v_{\sigma})$, with $v_{\sigma}$ denoting the unique entrance grade of the cell $\sigma$. The persistent homology module $V_q=\bigoplus_{u\in\N^n}H_q(X^u)$ is then defined as the homology at the middle term of the sequence $C_{q+1}\xrightarrow{\partial_{q+1}}C_{q}\xrightarrow{\; \partial_{q}\;}C_{q-1}$ of free $n$-graded $S$-modules. For a multi-critical filtration $\{X^u\}_{u\in \N^n}$, the modules of this sequence are in general not free. Using results from  \cite{Chacholski2017}, one can however present $V_q$ as the homology at the middle term of a sequence of free $n$-graded $S$-modules $A\xrightarrow{f}B\xrightarrow{g}C$ satisfying $gf=0$, which enables applying our results. Below, we describe the strategy to construct such a sequence starting from a multi-critical filtration of a cell complex. For brevity, in this section we call a chain complex any sequence of (not necessarily free) $n$-graded $S$-modules $A\xrightarrow{f}B\xrightarrow{g}C$ with $gf=0$, observing that it can be viewed for example as the chain complex $\cdots \to 0 \to \ker f \to A\xrightarrow{f}B\xrightarrow{g}C \to \coker g \to 0 \to \cdots$.   

Let $\mathcal{X}=\{X^u\}_{u\in \N^n}$ be a multi-critical $n$-parameter filtration of a cell complex $X$. We suppose the filtration to be exhaustive, meaning that $X=\bigcup_{u\in\N^n} X^u$. For every fixed $q\in \N$, we denote by $\mathcal{X}_q=\{X^u_q\}_{u\in \N^n}$ the induced filtration of sets of $q$-cells. Following \cite[Sect.~4]{Chacholski2017}, we recall how to construct a free presentation of the $n$-graded $S$-module $C_q \coloneqq \bigoplus_{u\in \N^n}C_q(X^u)$.

For any cell $\sigma \in X_q$, the $n$-parameter filtration $\mathcal{X}_q[\sigma] = \{X^u_q[\sigma]\}_{u\in \N^n}$ of sets is defined by
\[
X^u_q[\sigma] = \begin{cases}
\{\sigma \} & \text{if } \sigma \in X^u_q, \\    
\;\; \emptyset & \text{if } \sigma \notin X^u_q.
\end{cases}
\]
Let $\operatorname{ent}(\sigma)\coloneqq \{u\in \N^n \mid \sigma \in X_q^u \smallsetminus \bigcup_{j=1}^n X_q^{u-e_j} \}$ denote the set of entrance grades\footnote{In \cite{Chacholski2017}, the notation $\operatorname{gen}(\sigma)$ is used for the set here denoted by $\operatorname{ent}(\sigma)$.} of $\sigma$. We recall that a filtration is one-critical if and only if $\operatorname{ent}(\sigma)$ has exactly one element, for every cell $\sigma$ of $X$.
The $\F$-linear span of the filtration $\mathcal{X}_q[\sigma]$ is the $n$-graded $S$-module $C_q[\sigma]=\bigoplus_{u\mid \sigma \in X_q^u} \F$, which is isomorphic to the monomial ideal $\langle x^v \mid v\in \operatorname{ent}(\sigma) \rangle$. As observed in \cite{Chacholski2017},
a free presentation of the $n$-graded $S$-module $C_q[\sigma]$ is given by
\[
\bigoplus_{v_0\ne v_1 \in \operatorname{ent}(\sigma)} S(-v_0 \vee v_1)\xrightarrow{\; \pi_0[\sigma]-\pi_1[\sigma] \;} \bigoplus_{v \in \operatorname{ent}(\sigma)} S(-v),
\]
where the $n$-graded homomorphism $\pi_i[\sigma]$ sends the generator $1_{v_0 \vee v_1}$ at grade $v_0 \vee v_1$ of $S(-v_0 \vee v_1)$ to $x^{v_0 \vee v_1 -v_i} 1_{v_i} \in S(-v_i)^{v_0 \vee v_1}$, for $i\in \{0,1\}$.

The $n$-graded $S$-module $C_q \coloneqq \bigoplus_{u\in \N^n}C_q(X^u)$, which is the $\F$-linear span of the filtration $\mathcal{X}_q$, is isomorphic to $\bigoplus_{\sigma \in X_q} C_q[\sigma]$. As already observed, if the filtration $\mathcal{X}_q$ is not one-critical, $C_q$ is not free. A free presentation of $C_q$ is given by:
\[
\bigoplus_{\sigma \in X_q} \left( \bigoplus_{v_0\ne v_1 \in \operatorname{ent}(\sigma)} S(-v_0 \vee v_1)\xrightarrow{\; \pi_0[\sigma]-\pi_1[\sigma] \;} \bigoplus_{v \in \operatorname{ent}(\sigma)} S(-v) \right) .
\]
In other words, $C_q$ is isomorphic to the cokernel of the $n$-graded homomorphism $\pi_0 - \pi_1 \coloneqq \bigoplus_{\sigma \in X_q} \left( \pi_0[\sigma]-\pi_1[\sigma] \right)$. To establish notations of modules and homomorphisms that will be used in what follows, we write this presentation of $C_q$ as
\begin{equation}\label{eq:freepresCq}
\begin{tikzcd}[column sep=large]
	{R_q} & {G_q} & {C_q},
	\arrow["{\pi_0-\pi_1}", from=1-1, to=1-2]
	\arrow["{p_q}", two heads, from=1-2, to=1-3]
\end{tikzcd}
\end{equation}
where $G_q \coloneqq \bigoplus_{\sigma\in X_q} \bigoplus_{v\in \operatorname{ent}(\sigma)}S(-v)$ and $R_q \coloneqq \bigoplus_{\sigma\in X_q} \bigoplus_{v_0 \ne v_1 \in \operatorname{ent}(\sigma)}S(-v_0\vee v_1)$.

Next, following \cite[Sect.~5]{Chacholski2017} we review how the $n$-parameter persistent homology module $V_q=\bigoplus_{u\in \N^n}H_q(X^u)$ associated with a multi-critical filtration $\mathcal{X}=\{X^u\}_{u\in \N^n}$ can be expressed as the homology of an explicitly constructed chain complex of free $n$-graded $S$-modules. 
Although the construction of \cite[Sect.~5]{Chacholski2017} is for $n$-parameter filtrations of simplicial complexes, it can readily be adapted to $n$-parameter filtrations of cell complexes, as we now explain.

Starting from the sequence of $n$-graded $S$-modules
\begin{equation}\label{eq:shortCq}
\begin{tikzcd}
	{C_{q+1}} && {C_{q}} && {C_{q-1}},
	\arrow["{\partial_{q+1}}", from=1-1, to=1-3]
	\arrow["{\partial_{q}}", from=1-3, to=1-5]
\end{tikzcd}
\end{equation}
consider $C_{q-1}\cong \bigoplus_{\sigma \in X_{q-1}}C_{q-1}[\sigma]$ and define the free $n$-graded $S$-module $D_{q-1}\coloneqq \bigoplus_{\sigma \in X_{q-1}} S$ and the $n$-graded homomorphism $\eta_{q-1}: C_{q-1}\to D_{q-1}$ given by the direct sum of the inclusions $C_{q-1}[\sigma]\hookrightarrow S$, for all $\sigma \in X_{q-1}$. Since $\eta_{q-1}$ is injective, replacing $\partial_q$ by the composition $\eta_{q-1}\partial_q$ in the sequence (\ref{eq:shortCq}) does not affect the homology $V_q$ at the middle term. Similarly, since the homomorphism $p_{q+1}:G_{q+1}\to C_{q+1}$ defined as in (\ref{eq:freepresCq}) is surjective, replacing $\partial_{q+1}$ by the composition $\partial_{q+1}p_{q+1}$ in the sequence (\ref{eq:shortCq}) does not affect the homology $V_q$. In other words, the homology at the middle term of 
\begin{equation}\label{eq:shortCq2}
\begin{tikzcd}
	{G_{q+1}} && {C_{q}} && {D_{q-1}},
	\arrow["{\partial_{q+1}p_{q+1}}", from=1-1, to=1-3]
	\arrow["{\eta_{q-1}\partial_{q}}", from=1-3, to=1-5]
\end{tikzcd}
\end{equation}
is isomorphic to $V_q$. 
Since $G_{q+1}$ is free (and hence projective) and $p_q$ is surjective, there exists an $n$-graded homomorphism $\delta_{q+1}:G_{q+1}\to G_q$ such that the triangle
\[
\begin{tikzcd}[column sep=large]
	& {G_{q}} \\
	{G_{q+1}} & {C_{q}}
	\arrow["{p_q}", from=1-2, to=2-2]
	\arrow["{\delta_{q+1}}", from=2-1, to=1-2]
	\arrow["{\partial_{q+1}p_{q+1}}"', from=2-1, to=2-2]
\end{tikzcd}
\]
commutes. 
The proof of \cite[Prop.~5.2]{Chacholski2017} carries over, showing that $V_q$ is isomorphic to the homology at the middle term of the following chain complex of free $n$-graded $S$-modules:
\begin{equation}\label{eq:shortABC}
\begin{tikzcd}[column sep=large]
	{R_{q}\oplus G_{q+1}} && {G_{q}} && {D_{q-1}}.
	\arrow["{[\pi_0-\pi_1 \;\; \delta_{q+1}]}", from=1-1, to=1-3]
	\arrow["{\eta_{q-1}\partial_{q}p_q}", from=1-3, to=1-5]
\end{tikzcd}
\end{equation}
We remark that the construction of this chain complex is not canonical, as it requires choosing a lift $\delta_{q+1}$.
Now we denote by $\G (G_{q})$ the set of grades of the generators of $G_{q}$, and by $\G (R_{q})$ the set of grades of the generators of $R_{q}$, for all $q$. Explicitly, they are the following subsets of $\N^n$:
\begin{equation}\label{eq:GqRq}
\begin{split}
\G (G_{q}) &= \{ v\in \N^n \mid v\in \operatorname{ent}(\sigma) \text{ for some } \sigma \in X_q \}, \\
\G (R_{q}) &= \{ w\in \N^n \mid w=v_0\vee v_1 \text{ with } v_0\ne v_1\in \operatorname{ent}(\sigma), \text{ for some } \sigma \in X_q \}.
\end{split}
\end{equation}
Our results of Section \ref{sec:support_xi} and Section \ref{sec:2param} can be applied to the persistent homology module $V_q$ of a multi-critical filtration $\mathcal{X}=\{X^u\}_{u\in\N^n}$ by replacing the chain complex (\ref{eq:shortCq}) of (not necessarily free) $n$-graded $S$-modules by the chain complex (\ref{eq:shortABC}) of free $n$-graded $S$-modules to present $V_q$ as the homology at the middle term. 
In particular, this affects the sets of entrance grades of cells: in degree $q$, the set $\G (G_{q})$ now plays the role of $\G (X_{q})$ in Section \ref{sec:support_xi}; similarly, $\G (R_{q}\oplus G_{q+1})= \G (R_{q})\cup \G(G_{q+1})$ now replaces the set $\G (X_{q+1})$. 
Lastly, we observe that, with the aim of reducing the involved chain complexes, one can replace the $n$-filtered cell complex $X$ with an $n$-filtered Morse complex $M$, consider (\ref{eq:shortCq}) to be the chain complex associated with $M$, and construct (\ref{eq:shortABC}) from it.

As an example of how the results on one-critical filtrations can be adapted, we state the generalization of Theorem \ref{thm:supp_xi_m} and Corollary \ref{coro:H1K} to the case of multi-critical filtrations.

\begin{prop}\label{prop:multi-crit-support}
Let $\{ X^u \}_{u\in \N^n}$ be a multi-critical $n$-parameter exhaustive filtration of a cell complex $X$. Then, for all $q\in \N$,
\[
\supp \xi^q_0   \subseteq \overline{\G (G_q)}, \qquad \supp \xi_1^q \subseteq \G(R_{q})\cup \G(G_{q+1})\cup \overline{\G(G_q)}, \qquad \supp \xi^q_n   \subseteq \overline{\G (R_{q}) \cup \G(G_{q+1})},
\]
and
\[
\bigcup_{i=0}^n \supp \xi^q_i   \subseteq \overline{\G(R_{q})\cup \G(G_{q+1})} \cup \overline{\G(G_q)} ,
\]
where the sets $\G(G_q)$, $\G(G_{q+1})$ and $\G(R_{q})$ are as in (\ref{eq:GqRq}). Furthermore, the same containments hold if the sets $\G(G_q)$, $\G(G_{q+1})$ and $\G(R_{q})$ are determined from $\{ M^u \}_{u\in \N^n}$ instead of $\{ X^u \}_{u\in \N^n}$,
where $\{ M^u \}_{u\in \N^n}$ is the $n$-parameter filtration of the Morse complex $M$ associated with any fixed discrete gradient vector field consistent with the filtration $\{ X^u \}_{u\in \N^n}$.
\end{prop}

\section*{Acknowledgments} 
This work was partially supported by the Wallenberg AI, Autonomous Systems and Software Program (WASP) funded by the Knut and Alice Wallenberg Foundation,  by the dBrain collaborative project at Digital Futures at KTH, by the Strategic Support grant of the Digitalisation Platform at KTH and by the Data Driven Life Science (DDLS) program funded by the Knut and Alice Wallenberg Foundation.
This work was partially carried out by the second author within the activities of ARCES (University of Bologna) and under the auspices of INdAM-GNSAGA.

\bibliographystyle{alpha}
\bibliography{BiblioDatabase}

\end{document}